\newtheorem{thm}{Theorem}[section]
\newtheorem{lem}[thm]{Lemma}
\newtheorem{prop}[thm]{Proposition}
\newtheorem{cor}[thm]{Corollary}
\newtheorem*{rem}{Remark}
\newcommand{\E}{\mathbf{E}}
\newcommand{\PP}{\mathbf{P}}
\newcommand{\R}{\mathbb{R}}
\newcommand{\ii}{{\rm i}}
\newcommand{\oo}{{\rm o}}
\newcommand{\rd}{{\rm d}}
\newcommand{\m}{\text{meas}}
\DeclareMathOperator{\OO}{O}
\newcommand{\re}{{\rm Re}}
\newcommand{\1}{\mathbf 1}
\title{\Large Large Deviation Estimates of Selberg's Central Limit Theorem \\  and Applications}
\author[1,2]{\normalsize Louis-Pierre Arguin}
\author[2]{\normalsize Emma Bailey}
\affil[1]{\footnotesize  \it Department of Mathematics, Baruch College, CUNY, New York, NY}
\affil[2]{\footnotesize  \it Department of Mathematics, CUNY Graduate Center, New York, NY}
\date{February 20, 2022}
\begin{document}

\maketitle
\begin{abstract}
For $V\sim \alpha \log\log T$ with $0<\alpha<2$, we prove
\[
\frac{1}{T}\text{meas}\{t\in [T,2T]: \log|\zeta(1/2+ {\rm i} t)|>V\}\ll \frac{1}{\sqrt{\log\log T}} e^{-V^2/\log\log T}.
\]
This improves prior results of Soundararajan and of Harper on the large deviations of Selberg's Central Limit Theorem in that range, without the use of the Riemann hypothesis. 
The result implies the sharp upper bound for the fractional moments of the Riemann zeta function proved by Heap, Radziwi{\l}{\l} and Soundararajan. 
It also shows a new upper bound for the maximum of the zeta function on short intervals of length $(\log T)^\theta$, $0<\theta <3$, that is expected to be sharp for $\theta > 0$. 
Finally, it yields a sharp upper bound (to order one) for the moments on short intervals, below and above the freezing transition. 
The proof is an adaptation of the recursive scheme introduced by Bourgade, Radziwi{\l}{\l} and one of the authors to prove fine asymptotics for the maximum on intervals of length $1$.
\end{abstract}

\section{Introduction}
\subsection{Main Result}
Selberg's Central Limit Theorem \cite{sel46, sel92} states that the logarithm of the Riemann zeta function $\zeta(s)$ at a typical point on the critical line $\re \ s=1/2$ behaves like a complex Gaussian random variable of mean $0$ and variance $\log\log T$. 
Specifically, if $\tau$ is uniformly distributed on $[T,2T]$, then for the real part of the logarithm we have
\[
\PP\Big(\log |\zeta(1/2+\ii \tau)|>\sqrt{\tfrac{1}{2}\log\log T}\cdot y\Big)\sim \int_y^{\infty} \frac{e^{-z^2/2}}{\sqrt{2\pi}}\rd z, \quad y\in \R, \text{ as $T\to\infty$}.
\]
See \cite{radsou17} for an elegant self-contained proof of this, and \cite{sou21} for a survey on the distribution of values of $L$-functions in general.
In this paper, we prove that the above Gaussian decay persists in the large deviation regime:
\begin{thm}
\label{thm: LD Selberg}
Let $V\sim \alpha \log\log T$ with $0<\alpha<2$. We have for $T$ large enough
\[
\PP(\log|\zeta(1/2+\ii \tau)|>V)\ll \frac{1}{\sqrt{\log\log T}}\exp\left(\frac{-V^2}{\log\log T}\right).
\]
The implicit constant in the inequality can be taken uniform in $\alpha$ in any compact subset of $(0,2)$.
\end{thm}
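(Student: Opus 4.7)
The plan is to adapt the multiscale recursive scheme of Bourgade--Radziwi\l\l{} and one of the authors (alluded to in the abstract) from the maximum-on-intervals problem to the one-point large deviation regime. The starting point would be an unconditional pointwise Soundararajan-type inequality: there is a weight $w$ supported on $p\le X$ such that, for all $t\in[T,2T]$,
\[
\log|\zeta(\tfrac12+\ii t)|\le \re\sum_{p\le X}\frac{w(p)}{p^{1/2+\ii t}}+\OO(1),
\]
with $X=T^{1/\eta(T)}$ for a slowly growing $\eta$. The parameter $X$ must be short enough that the $2k$-th moment over $[T,2T]$ of the Dirichlet polynomial matches the Gaussian prediction $\frac{(2k)!}{k!\,2^k}(\tfrac12\log\log X)^k$ for every $k$ up to roughly $\log T/\log X$, yet long enough that the pointwise remainder is $\oo(V)$.

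\textbf{Scale decomposition.} I would then split the primes into geometric scales $I_j=(T^{e^{-j-1}},T^{e^{-j}}]$ for $0\le j\le K$, with $K\sim\log\log T$, and set
\[
S_j(t)=\re\sum_{p\in I_j}\frac{w(p)}{p^{1/2+\ii t}}.
\]
By Mertens each $S_j$ has variance $\tfrac12+o(1)$, and for moderate moment orders the $(S_j)$ behave as an approximately independent Gaussian sequence on $[T,2T]$. The total variance is $\tfrac12\log\log T$, matching Selberg's CLT, and the event of interest becomes $\{\sum_{j\le K}S_j>V\}$ with $V\sim\alpha\log\log T$.

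\textbf{Barrier event and recursion.} The crucial observation is that, conditionally on $\sum_{j\le K}S_j>V$, the partial sums $\sum_{j\le k}S_j$ should hug the linear ramp $k\mapsto \alpha k$. I would introduce a first-passage decomposition over the earliest scale $\ell$ at which $\sum_{j\le\ell}S_j$ exceeds a barrier of the form $\alpha k+\delta(k)$, with a mildly concave slack $\delta$. On each such piece only partial sums up to scale $\ell$ enter, so the sharp Gaussian moment bounds for short Dirichlet polynomials are available; summing the resulting per-scale $\exp(-S_j^2)$-type factors telescopes to the Gaussian exponent $\exp(-V^2/\log\log T)$. The extra prefactor $1/\sqrt{\log\log T}$ is the analogue of the ballot/random-walk saving: the barrier forces the terminal sum to lie \emph{near} $V$ rather than merely exceeding it, so one effectively uses a Gaussian density (Plancherel) estimate at the final scale in place of a tail estimate.

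\textbf{Main obstacle.} The principal technical difficulty is balancing three competing parameters: the truncation $X$ (short enough for sharp moments, long enough for a negligible pointwise error), the moment order used at scale $j$ (bounded by $c/\log X_j$ to preserve the Gaussian heuristic), and the barrier slack $\delta$ (small enough not to degrade the exponent $V^2/\log\log T$, large enough to absorb accumulated errors over $K\sim\log\log T$ scales without compounding into a multiplicative $\exp(K)$ loss). The restriction $\alpha<2$ enters naturally: at the freezing threshold $\alpha=2$ a Dirichlet polynomial of length $T^{o(1)}$ cannot on its own produce deviations of that size, so a different mechanism would be required. I expect the sharp constants in the per-scale moment estimates, and the compatibility between $\delta(k)$ and the recursion, to be the most delicate part.
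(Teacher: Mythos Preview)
Your outline has a genuine gap at the very first step. The ``unconditional pointwise Soundararajan-type inequality''
\[
\log|\zeta(\tfrac12+\ii t)|\le \re\sum_{p\le X}\frac{w(p)}{p^{1/2+\ii t}}+\OO(1)
\]
does not exist. Soundararajan's inequality of this shape assumes the Riemann hypothesis; unconditionally there is no pointwise upper bound for $\log|\zeta(\tfrac12+\ii t)|$ by a short Dirichlet polynomial with a remainder that is $\oo(V)$. Once you grant yourself such an inequality the rest of your scheme is essentially Harper's conditional argument and would recover \eqref{eqn: harper} (perhaps with the extra $1/\sqrt{\log\log T}$), but the theorem as stated is unconditional, and your approach cannot deliver that.

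The paper avoids this obstacle by never replacing $\zeta$ by a Dirichlet polynomial. It keeps $|\zeta|$ in the estimate throughout and instead introduces mollifiers $\mathcal M_\ell$ (partial Euler products over primes in dyadic-in-$\log\log$ ranges) together with an event $D_\ell$ on which $|\zeta|e^{-S_{t_\ell}}$ is dominated by $|\zeta\,\mathcal M_1\cdots\mathcal M_\ell|$ up to a negligible additive error. The key analytic input is then an unconditional \emph{twisted fourth moment} of $\zeta$ (Lemma~\ref{lemma_9}), which both replaces your pointwise step and is the true source of the restriction $\alpha<2$: deviations at level $\alpha\log\log T$ are governed by the $2\alpha$-moment, and only up to the fourth moment is unconditionally available. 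Your explanation that a Dirichlet polynomial of length $T^{o(1)}$ cannot reach level $2\log\log T$ is a heuristic about the Dirichlet polynomial, not about $\zeta$, and is not what forces $\alpha<2$ here.

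Two further structural differences are worth noting. First, the paper does \emph{not} use $K\sim\log\log T$ unit-variance scales; it constrains the partial sums $S_{t_\ell}$ only at a bounded number of checkpoints $t_\ell=t-\mathfrak s\log_\ell t$ (iterated logarithms), which is precisely what keeps the mollifier/Dirichlet-polynomial lengths under control when combined with the fourth-moment input. Second, the $1/\sqrt{\log\log T}$ saving does not come from a ballot-type argument across many scales; it appears already at the first checkpoint $t_1$, from a Gaussian \emph{local} estimate $\PP(S_{t_1}\in(w,w+1])\ll e^{-w^2/t_1}/\sqrt{t_1}$ obtained by approximating the indicator by a short Dirichlet polynomial (Lemmas~\ref{lemma_4 easy} and~\ref{lemma_3}). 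Your barrier picture is qualitatively right, but the mechanism producing the prefactor is a density estimate at a single macroscopic scale, not a random-walk entropy saving accumulated over $\log\log T$ steps.
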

Throughout the paper, the notation $\ll$ means that the left is $\OO$ of the right side as $T\to\infty$, and that the implicit constant is possibly $\alpha$-dependent.

In the interval $0<V< 2\log\log T$, Theorem \ref{thm: LD Selberg} is an improvement of a more general theorem of Soundararajan \cite{sou09}, which states for this particular range that
\begin{equation}
\label{eqn: sound}
\PP(\log|\zeta(1/2+\ii \tau)|>V)\ll (\log T)^{\oo(1)}\cdot \exp\left(\frac{-V^2}{\log\log T}\right).
\end{equation}
Harper \cite{har13b} also proved sharp bounds for the moments of the zeta function, which by Markov's inequality imply 
\begin{equation}
\label{eqn: harper}
\PP(\log|\zeta(1/2+\ii \tau)|>V)\ll \exp\left(\frac{-V^2}{\log\log T}\right).
\end{equation}
Both results assume the Riemann hypothesis, whereas Theorem \ref{thm: LD Selberg} is unconditional.
Equations~\eqref{eqn: sound} and~\eqref{eqn: harper} do hold conditionally on a wider range of $V$, for example $V\sim k\log\log T$ for any $k>0$. 

Heap, Radziwi\l{\l} and Soundararajan proved sharp upper bounds for the moments between $0$ and $4$, cf.~Corollary \ref{cor: fractional}, which imply Equation \eqref{eqn: harper} unconditionally.
For $\sqrt{\log\log T}\log\log\log T\leq V\leq 2\log\log T-2\sqrt{\log\log T}\log\log\log T$, Heap and Soundararajan~\cite{heasou20} also proved unconditionally the asymptotic behavior 
\[\PP(\log|\zeta(1/2+\ii \tau)|>V)= \exp\left(\frac{-V^2}{\log\log T}+\OO\left(\frac{V\log\log\log T}{\sqrt{\log\log T}}\right)\right).\]
It was conjectured by Radziwi{\l}{\l} \cite{rad11} that the Gaussian behavior actually extends to the whole range $V\sim k\log\log T$, $k>0$, up to a multiplicative factor
\[
\PP\Big(\log |\zeta(1/2+\ii \tau)|>V\Big)\sim C_k \int_V^{\infty} \frac{e^{-y^2/\log\log T}}{\sqrt{\pi\log\log T}}\rd y,
\]
where $C_k$ is the conjectured leading coefficient of the $2k$-moment (cf.~\cite{keasna00a}).
If we write $V=\alpha \log\log T + \sigma y$ for $\sigma=\oo(\sqrt{\log\log T})$, then Theorem \ref{thm: LD Selberg} also gives an upper bound to order one for a local version of Selberg's Central Limit Theorem, as proposed in \cite{bormelnik19}. (See Proposition 4.8 there for a more precise result for a random model of zeta.) Finally, we also remark that for characteristic polynomials of random unitary matrices, large deviations in the equivalent regime to Theorem~\ref{thm: LD Selberg} were proved in~\cite{hugkeaoco01} and precise asymptotics (including the constant) were proved in~\cite{fermelnik16}.

Theorem \ref{thm: LD Selberg} is proved in Section \ref{sect: proof}. 
The method is an adaptation of a recursive scheme introduced in \cite{argbourad20} to prove a sharp upper bound to the Fyodorov-Hiary-Keating Conjecture, cf.~Equation \eqref{eqn: FHK}.
Consider the Dirichlet polynomials
\begin{equation}
  \label{eqn: S1}
  S_k=\sum_{\log{2}\leq\log{p} \leq e^k} \frac{\re\ p^{-\ii \tau}}{p^{1/2}}, \quad k\geq 1.
\end{equation}
These partial sums are a good proxy for $\log |\zeta(1/2+\ii \tau)|$ for $k$ close to $\log\log T$. 
Moreover, the moments of $S_k$ are very close to Gaussian, see for example Lemma \ref{lem: Gaussian moments real} or \cite[Lemma 3.4]{abbrs19}.
However, the error for these moments is too large to handle simultaneously $k$ close to $\log\log T$ as well as moments of order $\log\log T$. 

The idea is to restrict the estimate of the probability to good events where the partial sums~\eqref{eqn: S1} takes values in a narrow interval. 
The implementation of this recursive scheme is much simpler here than in \cite{argbourad20}, where restrictions at every $k$ were needed. 
Namely, for Theorem \ref{thm: LD Selberg}, the partial sums only need to be constrained on a sparse collection of $k$'s of the form
\begin{equation}
\label{eqn: t1}
t_\ell=\log\log T-\mathfrak s \log_{\ell+2} T, \quad \ell\geq 1,
\end{equation}
for some ($\alpha$-dependent) $\mathfrak s$, where $\log_\ell$ stands for the logarithm iterated $\ell$ times.
Moreover, since Theorem~\ref{thm: LD Selberg} only concerns large values of $\zeta$ at a single point, no discretization is needed here compared to~\cite{argbourad20} where the authors considered the maximum of $\zeta$ over a short $\OO(1)$-range.  This simplifies the statements and proofs of various foundational results (cf. Lemmas~\ref{lemma_4 easy},~\ref{lemma_3}, and \ref{lemma_4}) regarding second and twisted fourth moments of Dirichlet polynomials.  As a corollary to Theorem~\ref{thm: LD Selberg}, we prove an upper bound on the maximum of $\zeta$ over a growing window, cf. Corollary~\ref{cor: max}.

The restriction is on good events of the form 
\[
\{S_{t_\ell}\in [L_\ell, U_\ell] \}, \quad \ell\geq 1,
\]
where $L_\ell$ is slightly below the linear interpolation $\alpha t_\ell$ and $U_\ell$ is slightly above. 
These barriers must be chosen carefully and dependent on $\alpha$. Also, $U_\ell$ must be much higher than the upper barrier picked in \cite{argbourad20} as the fluctuations here can be greater.
It turns out that the dominant term of the probability in Theorem \ref{thm: LD Selberg} comes from the intersection of all the good events above. 
On these events, the increments $S_{t_{\ell+1}}-S_{t_{\ell}}$ are restricted to a range where large deviations can be estimated. 

Theorem \ref{thm: LD Selberg} must be restricted to $\alpha<2$ since we rely on a twisted fourth moment estimate (Lemma \ref{lemma_9}). 
More generally, large deviations in the range $\alpha\log\log T$ are controlled by the $2\alpha$-moment of zeta. 
This suggests that the method of proof should be adaptable to prove Theorem \ref{thm: LD Selberg} for any $\alpha>0$ assuming the Riemann hypothesis, where all such moments can be sharply bounded.
This would improve the bounds \eqref{eqn: sound} and \eqref{eqn: harper} in the full range $\alpha\log\log T$, $\alpha>0$, conditionally.
We also expect that a matching lower bound (up to constant) can be found using the techniques of \cite{argbourad22}. 
In \cite{rad11}, it was proved that Selberg's theorem holds up to $V$ of the order of $(\log\log T)^{3/5-\varepsilon}$.  Subsequently, Inoue~\cite{ino19} improved the range of $V$ up to $(\log\log T)^{2/3}$.  
The techniques involved in the proof of Theorem \ref{thm: LD Selberg} do not seem to be applicable to the range $V=\oo(\log\log T)$. 
Interestingly, this leaves a gap between $V\ll(\log\log T)^{2/3}$ and $V\sim \alpha\log\log T$ where the Gaussian decay remains open.

\subsection{Applications}
The first corollary of Theorem \ref{thm: LD Selberg} is an alternative proof of a sharp upper bound for fractional moments of the zeta function, proved unconditionally by Heap, Radziwi\l{\l} and Soundararajan.
\begin{cor}[Theorem 1 in \cite{hearadsou19}]
  \label{cor: fractional}
  Let $0< \beta < 4$. We have for $T$ large enough
  \begin{equation}
  \label{eqn: M}
  M_\beta=\frac{1}{T}\int_T^{2T} |\zeta(1/2+\ii t)|^\beta \rd t \ll(\log T)^{\beta^2/4},
  \end{equation}
  where the implicit constant depends on $\beta$.
\end{cor}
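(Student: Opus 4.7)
The plan is to convert the tail bound of Theorem \ref{thm: LD Selberg} into a moment bound via the layer-cake formula. Writing $\tau$ for a uniform random variable on $[T,2T]$, the substitution $y=e^V$ in $\E|\zeta(1/2+\ii\tau)|^\beta = \beta\int_0^\infty y^{\beta-1}\PP(|\zeta(1/2+\ii\tau)|>y)\,\rd y$ gives
\[
M_\beta = \beta\int_{-\infty}^\infty e^{\beta V}\,\PP\bigl(\log|\zeta(1/2+\ii\tau)|>V\bigr)\,\rd V.
\]
The heuristic driver is the saddle point $V^\star = (\beta/2)\log\log T$ of $\beta V - V^2/\log\log T$, where the exponent equals $(\beta^2/4)\log\log T$. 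Since $\beta<4$, we have $V^\star<2\log\log T$, so $V^\star$ lies strictly inside the range accessible to Theorem \ref{thm: LD Selberg}.

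Next I would split the integral at two thresholds $V_- = \varepsilon\log\log T$ and $V_+ = (2-\varepsilon)\log\log T$ with $\varepsilon>0$ small. On $(-\infty, V_-]$, the trivial bound $\PP \leq 1$ yields a contribution $\leq e^{\beta V_-}/\beta = (\log T)^{\beta\varepsilon}/\beta$, which is absorbed into $(\log T)^{\beta^2/4}$ provided $\varepsilon \leq \beta/4$. On $[V_-, V_+]$, the ratio $V/\log\log T$ lies in a compact subset of $(0,2)$, so Theorem \ref{thm: LD Selberg} applies with a uniform implicit constant, and completing the square yields
\[
\frac{1}{\sqrt{\log\log T}}\int_{\R} \exp\!\Bigl(\beta V - \frac{V^2}{\log\log T}\Bigr)\rd V = \sqrt{\pi}\,(\log T)^{\beta^2/4},
\]
which is the target bound up to a constant.

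Finally, for the far tail $V > V_+$ I would invoke the unconditional Ingham asymptotic $M_4 \sim c(\log T)^4$ together with Markov's inequality to get $\PP(\log|\zeta(1/2+\ii\tau)|>V) \ll (\log T)^4\, e^{-4V}$. Integrating $e^{\beta V}$ against this from $V_+$ to infinity produces a contribution of order $(\log T)^{2\beta-4+O(\varepsilon)}$, which is dominated by $(\log T)^{\beta^2/4}$ because $(\beta-4)^2/4 = \beta^2/4 - (2\beta - 4) > 0$ for $\beta<4$; choosing $\varepsilon$ small enough (depending on $\beta$) closes the estimate. The only step with any subtlety is identifying a correct unconditional tail input past the reach of Theorem \ref{thm: LD Selberg}, and this is where the hypothesis $\beta<4$ is used twice: it places the Gaussian saddle inside the accessible regime, and it gives a strictly positive margin $(\beta-4)^2/4$ that lets the fourth-moment bound absorb the far tail.
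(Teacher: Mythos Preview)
Your proposal is correct and matches the paper's own proof essentially line for line: layer-cake/integration by parts, a three-way split of the $V$-integral, the trivial bound on the low end, Theorem~\ref{thm: LD Selberg} with completion of the square on the middle range, and Markov with Ingham's fourth moment on the far tail. The only cosmetic difference is the choice of split points: the paper uses the explicit thresholds $\tfrac{\beta}{8}\log\log T$ and $\tfrac{1}{2}(3+\tfrac{\beta}{4})\log\log T$, whereas you use $\varepsilon\log\log T$ and $(2-\varepsilon)\log\log T$ with $\varepsilon$ chosen small depending on $\beta$.
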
 
The proof in \cite{hearadsou19} depends on twisted fourth moment estimates, as for Theorem \ref{thm: LD Selberg}. 
Hence, it might be considered at the same conceptual level as the proof of Corollary \ref{cor: fractional}.
Corollary \ref{cor: fractional} is proved in Section \ref{sect: fractional}.  Note that, via Markov's inequality, Equation~\eqref{eqn: M} shows in particular the Gaussian decay \eqref{eqn: harper} unconditionally, for $V\sim \frac{\beta}{2}\log\log T$ and $\beta\in[0,4]$.  

In short intervals, of size $(\log T)^\theta$ for $0\leq \theta<3$, Theorem \ref{thm: LD Selberg} implies an upper bound for the maximum up to order one precision:
\begin{cor}
  \label{cor: max}
  Let $0\leq \theta<3$ and $y>0$ such that $y=\oo\left(\frac{\log\log T}{\log\log\log T}\right)$. We have
  \begin{equation}
  \label{eqn: max}
  \max_{|h|\leq (\log T)^\theta}|\zeta(1/2+\ii t+\ii h)|\leq e^y\frac{(\log T)^{\sqrt{1+\theta}}}{(\log\log T)^{1/(4\sqrt{1+\theta})}},
  \end{equation}
  for all $t\in[T,2T]$ except on a set of Lebesgue measure $\ll e^{-2\sqrt{1+\theta}y} e^{-y^2/\log\log T}$.
\end{cor}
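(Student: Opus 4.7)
The plan is to apply Theorem~\ref{thm: LD Selberg} at the threshold
\[
V = \sqrt{1+\theta}\,\log\log T + y - \frac{\log\log\log T}{4\sqrt{1+\theta}},
\]
so that, after taking logarithms, Corollary~\ref{cor: max} is equivalent to saying $\max_{|h|\leq(\log T)^\theta}\log|\zeta(1/2+\ii t+\ii h)|\leq V$ for all $t\in[T,2T]$ outside an exceptional set of the stated measure. Since $\sqrt{1+\theta}<2$ precisely when $\theta<3$, the value $V\sim \sqrt{1+\theta}\log\log T$ lies inside the range of validity of Theorem~\ref{thm: LD Selberg}, and the implicit constant is uniform on compacta of $(0,3)$.

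Next, I would discretize the shift interval on a grid $\{h_k\}$ of spacing $c/\log T$, giving $N\asymp (\log T)^{1+\theta}$ sample points. Translation invariance of Lebesgue measure on $[T,2T]$, up to an $\oo(T)$ boundary loss, combined with Theorem~\ref{thm: LD Selberg} applied at each shift and a union bound, yields
\[
\frac{1}{T}\m\{t\in[T,2T]: \max_k \log|\zeta(1/2+\ii t+\ii h_k)|> V\} \ll \frac{N}{\sqrt{\log\log T}}\exp\!\left(-\frac{V^2}{\log\log T}\right).
\]
Expanding
\[
\frac{V^2}{\log\log T} = (1+\theta)\log\log T+2\sqrt{1+\theta}\,y-\tfrac{1}{2}\log\log\log T+\frac{y^2}{\log\log T}+\oo(1),
\]
where the cross-term $y\log\log\log T/\log\log T$ is forced to $\oo(1)$ by the hypothesis $y=\oo(\log\log T/\log\log\log T)$, the factors $(\log T)^{1+\theta}$ and $(\log\log T)^{1/2}$ coming from $N$ and from $e^{L'/2}$ cancel exactly against $(\log T)^{-(1+\theta)}$ and $(\log\log T)^{-1/2}$ from the tail, producing the claimed bound $\ll e^{-2\sqrt{1+\theta}y}e^{-y^2/\log\log T}$. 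The arithmetic of the corollary is thus entirely encoded in the precise choice of $V$ above that balances the Gaussian tail against the size of the grid.

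The remaining, and main technical, step is to pass from the discrete maximum to the continuous one without degrading this bound. For this I would exploit that $\zeta(1/2+\ii(t+h))$ is well-approximated via the approximate functional equation by a Dirichlet polynomial of length $\sqrt{T/(2\pi)}$ in the variable $h$, whose Fourier support in $h$ is contained in an interval of length $\OO(\log T)$. A Bernstein-type inequality then controls its $h$-derivative by $\OO(\log T)$ times its sup-norm on an interval of length $\OO(1/\log T)$, so with the grid constant $c$ small enough the discrete-to-continuous passage contributes only a bounded multiplicative factor, equivalently an $\OO(1)$ additive shift in $V$ that may be absorbed into an $\OO(1)$ shift of $y$. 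This step is the most delicate, as it requires handling the error in the approximate functional equation uniformly over $|h|\leq(\log T)^\theta$, but it is standard once the bandlimited structure of $\zeta$ in $h$ is identified.
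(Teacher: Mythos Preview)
Your proposal is correct and follows essentially the same approach as the paper: choose $V=\sqrt{1+\theta}\,t-\tfrac{1}{4\sqrt{1+\theta}}\log t+y$, discretize the $h$-interval on a mesh of width $\asymp 1/\log T$, apply Theorem~\ref{thm: LD Selberg} at each grid point via a union bound, and check that the factor $(\log T)^{1+\theta}$ from the number of grid points cancels against the Gaussian tail. The only substantive difference is in the discrete-to-continuous passage. You propose a Bernstein-type argument through the approximate functional equation and flag it as ``the most delicate'' step; in fact the paper dispatches it in one line by invoking Lemma~2.2 of Farmer--Gonek--Hughes, which says directly that within a window of width $A/\log T$ around the maximizer $t^*$ one has $2|\zeta(1/2+\ii t)|>|\zeta(1/2+\ii t^*)|$. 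This absorbs the continuous maximum into the discrete one at the cost of a harmless factor of $2$, so no analysis of the approximate functional equation or its error term is needed.
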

The restriction to $\theta<3$ is due to the limitations in the range of large deviation, up to $2\log\log T$, in Theorem \ref{thm: LD Selberg}.
The result also gives a precise decay for the right tail of the maximum, which is exponential for small $y$'s and Gaussian for large ones. The condition on the size of $y$ in the statement of the corollary can be relaxed at the expense of a different decay rate, as can be easily observed within the proof.
Upper and lower bounds for the maximum with error $(\log T)^\varepsilon$ were proved in \cite{argouirad19}.
Corollary \ref{cor: max} proves the fine asymptotics up to order one as given in Conjecture 1.3 of \cite{argouirad19}. 
The proof of Corollary \ref{cor: max} is given in Section \ref{sect: max}. It is a simple union bound after suitably discretizing the interval on $(\log T)^{1+\theta}$ points.
It is expected that the bound is sharp for $\theta>0$, see \cite{aabhr21} for numerical evidence of this.
This is because for $\theta>0$, the values of zeta at the $(\log T)^{1+\theta}$ points should each behave like IID Gaussians of variance $\frac{1}{2}\log\log T$, see for example \cite{argouirad19}
\footnote{Closely related is a class of models called `continuous random energy models', cf.~\cite{bokulo02, bovier06, bovier17, bovhar15} that exhibit similar extreme value statistics for a suitable choice of parameters.}.
This is in contrast with the case $\theta=0$. Corollary \ref{cor: max} holds for this case, but it is not sharp. It was conjectured by Fyodorov, Hiary \& Keating and Fyodorov \& Keating, that the maximum of $\log |\zeta|$ on intervals of size one should behave exactly like the maximum of log-correlated stochastic processes \cite{fyohiakea12, fyokea14}.
It was shown in \cite{argbourad20} that
\begin{equation}
  \label{eqn: FHK}
  \max_{|h|\leq 1}|\zeta(1/2+\ii t+\ii h)|\leq e^y\frac{\log T}{(\log\log T)^{3/4}},
\end{equation}
for all $t\in[T,2T]$ except on a set of Lebesgue measure $\ll y e^{-2y} e^{-y^2/\log\log T}$.
Upper and lower bounds with error $(\log T)^\varepsilon$ were proved in \cite{naj18, abbrs19}.
A hybrid regime interpolating between IID and log-correlated statistics was also proposed in \cite{argdubhar21}.
For more on recent developments in extreme values of log-correlated processes, see for example \cite{baikea22}.

 Theorem \ref{thm: LD Selberg} can also be applied to improve current bounds for the moments of $\zeta$ in short intervals.
\begin{cor}
  \label{cor: subcritical}
  Let $0\leq \theta<3$. For all $\beta\geq 0$, we have for $A>1$
  \begin{equation}
    \label{eqn: subcritical}
    \int_{|h|\leq (\log T)^\theta} |\zeta(1/2+\ii t+\ii h)|^\beta\rd h\leq A (\log T)^{\frac{\beta^2}{4}+\theta},
  \end{equation}
  for all $t\in [T,2T]$ except possibly on a subset of Lebesgue measure $\ll 1/A$. 
  
  For $\beta>\beta_c=2\sqrt{1+\theta}$, a sharper bound holds:
  \begin{equation}
    \label{eqn: supercritical}
    \int_{|h|\leq (\log T)^\theta} |\zeta(1/2+\ii t+\ii h)|^\beta\rd h\leq C_{A,\beta}\cdot  (\log\log T)^{-\tfrac{\beta}{2\beta_c}}\cdot (\log T)^{\tfrac{\beta_c}{2}\beta -1},
  \end{equation}
  for all $t\in [T,2T]$ except possibly on a subset of Lebesgue measure $\ll 1/A$, where $C_{A,\beta}$ is an explicit constant dependent on $A$ and $\beta$.
\end{cor}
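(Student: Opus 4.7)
The plan for the first bound (subcritical regime) is a standard Fubini--Markov argument. One swaps the order of integration and applies Corollary~\ref{cor: fractional} to the inner $t$-integral after the shift $s = t + h$ (valid because $|h|\leq (\log T)^\theta = \oo(T)$), obtaining
\[
\frac{1}{T}\int_T^{2T} \int_{|h|\leq (\log T)^\theta} |\zeta(1/2+\ii(t+h))|^\beta \rd h \rd t \ll (\log T)^{\theta + \beta^2/4}
\]
for $0 \leq \beta \leq 4$; Markov's inequality then gives the claimed exceptional set of measure $\ll 1/A$. For $\beta > 4$, which under $\theta < 3$ forces $\beta > \beta_c$, the first bound follows from the sharper second bound together with the elementary inequality $\beta^2/4 + \theta \geq \beta_c\beta/2 - 1$ (equivalent to $(\beta - \beta_c)^2 \geq 0$), the extra $(\log\log T)$ factor being absorbed into the constant.

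For the second bound ($\beta > \beta_c$) the plan is a layer-cake decomposition whose range is truncated using Corollary~\ref{cor: max}. Set $V^* := \sqrt{1+\theta}\log\log T$ and $V^*_{\max} := V^* + y - \log\log\log T /(4\sqrt{1+\theta})$, and take $y = \beta_c^{-1}\log A$, so that Corollary~\ref{cor: max} places $\max_{|h|\leq (\log T)^\theta}\log|\zeta(1/2+\ii(t+h))|$ below $V^*_{\max}$ outside an exceptional set of $t$ of measure $\ll T/A$. On this good set the layer-cake formula reads
\[
\int_{|h|\leq (\log T)^\theta} |\zeta|^\beta \rd h = \beta \int_{-\infty}^{V^*_{\max}} e^{\beta V} \, \text{meas}\{|h|\leq (\log T)^\theta : \log|\zeta| > V\} \rd V,
\]
and averaging over $t \in [T, 2T]$ while invoking Theorem~\ref{thm: LD Selberg} (after the shift $s = t+h$) shows that the $t$-average of the inner measure is $\ll (\log T)^\theta e^{-V^2/\log\log T}/\sqrt{\log\log T}$, uniformly for $V \leq V^*_{\max} < 2\log\log T$ (this is where $\theta<3$ is used).

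The decisive analytic step is the evaluation of $\int_{-\infty}^{V^*_{\max}} e^{\beta V - V^2/\log\log T}\rd V$. Because $\beta > \beta_c = 2V^*/\log\log T$, the unconstrained Gaussian saddle $V = \beta\log\log T/2$ lies above $V^*_{\max}$, the integrand is monotonically increasing on $(-\infty, V^*_{\max}]$, and the substitution $V = V^*_{\max} + u$ with $u \leq 0$ bounds it by
\[
\frac{e^{\beta V^*_{\max} - V^{*2}_{\max}/\log\log T}}{\beta - \beta_c} \ll \frac{(\log T)^{\beta_c\beta/2 - 1 - \theta}\cdot e^{(\beta-\beta_c)y}\cdot (\log\log T)^{-(\beta-\beta_c)/(2\beta_c)}}{\beta - \beta_c}.
\]
Multiplying by the prefactor $(\log T)^\theta /\sqrt{\log\log T}$ and using the identity $\tfrac12 + (\beta - \beta_c)/(2\beta_c) = \beta/(2\beta_c)$ yields the target shape $(\log T)^{\beta_c\beta/2 - 1}(\log\log T)^{-\beta/(2\beta_c)}$, with constant $C_{A,\beta} \ll A^{\beta/\beta_c}\beta/(\beta - \beta_c)$; a final Markov step on the outer $t$ converts this averaged estimate into the pointwise bound, up to an additional exceptional set of measure $\ll 1/A$ which combines with the one from Corollary~\ref{cor: max} by union bound. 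The main obstacle is the saddle-point analysis of this truncated Gaussian, and in particular the clean identity that fuses the $(\log\log T)^{-1/2}$ coming from the density in Theorem~\ref{thm: LD Selberg} with the $(\log\log T)^{-1/(4\sqrt{1+\theta})}$ coming from Corollary~\ref{cor: max} into the precise exponent $-\beta/(2\beta_c)$; a cruder H\"older interpolation between $\int|\zeta|^{\beta_c}\rd h$ and $\max|\zeta|^{\beta - \beta_c}$ would lose an extra $(\log\log T)^{1/2}$.
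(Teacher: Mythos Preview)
Your argument is essentially correct but takes a different route from the paper in both regimes, so a brief comparison is warranted.

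\textbf{Subcritical.} You use the direct Fubini--Markov argument via Corollary~\ref{cor: fractional}. The paper explicitly acknowledges this works (and cites \cite{argouirad19}), but chooses instead to re-prove \eqref{eqn: subcritical} via the level-set function $\mathcal S(V)$: it partitions $[\tfrac{\beta}{8}t,\,m(t)+A]$ into subintervals of width $\sqrt t$, introduces events $E_j=\{I_j\le a_j\int e^{\beta V}\tfrac{e^{-V^2/t}}{\sqrt t}\,\rd V\}$ with carefully chosen weights $a_j$, and bounds the integral on $G=E\cap\bigcap_j E_j$. Your approach is shorter; the paper's is a deliberate warm-up for the supercritical case and for the heuristic discussion of $\beta=\beta_c$ in Section~\ref{sect: critical}.

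\textbf{Supercritical.} Here the difference is more genuine. You bound $\E[X\mathbf 1_E]$ by dropping the indicator, pulling $\E$ inside the truncated layer-cake, and applying a single Markov step at the end. The paper instead retains the multi-scale events $E_j$ (now with mesh $1$ and weights $a_j=A(1+y_j^2)$), controlling each $I_j$ separately before summing. Your route is simpler and in fact yields a polynomial constant $C_{A,\beta}\ll A^{\beta/\beta_c}\beta/(\beta-\beta_c)$, whereas the paper's argument (with $y=A$ in Corollary~\ref{cor: max}) produces $C_{A,\beta}\asymp A^3 e^{(\beta-\beta_c+1)A}$. What the paper's method buys is pointwise control of $\mathcal S(V)$ on each sub-level, which is the right framework for the critical discussion in Section~\ref{sect: critical} where the finer decay \eqref{eqn: S critical} must be exploited.

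\textbf{One gap to patch.} Your claim that the $t$-average of the inner measure is $\ll (\log T)^\theta e^{-V^2/\log\log T}/\sqrt{\log\log T}$ ``uniformly for $V\le V^*_{\max}$'' overstates Theorem~\ref{thm: LD Selberg}, which only applies for $V\sim\alpha\log\log T$ with $\alpha$ in a compact subset of $(0,2)$. For $V$ below (say) $\tfrac{\beta_c}{4}t$ you must use the trivial bound $\E[\mathcal S(V)]\le 2(\log T)^\theta$; the resulting contribution $\int_{-\infty}^{\beta_c t/4}e^{\beta V}\,2(\log T)^\theta\,\rd V\ll (\log T)^{\theta+\beta\beta_c/4}$ is negligible since $\theta+\beta\beta_c/4<\beta_c\beta/2-1$ for $\beta>\beta_c$. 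The paper makes exactly this cut explicitly. Your saddle-point computation and the identity $\tfrac12+\tfrac{\beta-\beta_c}{2\beta_c}=\tfrac{\beta}{2\beta_c}$ are correct.
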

Equation \eqref{eqn: subcritical} was proved in \cite{argouirad19}. It follows easily by Markov's inequality and the bound \eqref{eqn: M}.
Nevertheless, we provide another proof of this using the Lebesgue measure of high points.
This is helpful in understanding the proof of the sharper bound for the moments above $\beta_c$. 
Equation \eqref{eqn: supercritical} is an improvement on \cite{argouirad19}, where the result was given with a $(\log T)^\varepsilon$ error.
Interestingly, Equation \eqref{eqn: supercritical} is exactly the behavior expected for the moments of $(\log T)^{1+\theta}$ IID Gaussian random variables of variance $\frac{1}{2}\log\log T$ as computed by Bovier, Kurkova \& L\"owe \cite[Theorem 1.6]{bokulo02} for large $\beta$.  

Equations \eqref{eqn: subcritical} and \eqref{eqn: supercritical} exhibit a {\it freezing transition} (also referred to as {\it intermittency}) where the moments transition from quadratic to linear growth. In view of this, it is natural to ask if the bound \eqref{eqn: subcritical} at criticality $\beta=\beta_c$ is sharp. 
At $\theta=0$, where the system seems to behave like a log-correlated process, it can be improved as shown by Harper:
\begin{thm}[Theorem 1 and Corollary 1 in \cite{har19}]
\label{thm: Harper}
We have
\[
\int_{|h|\leq 1} |\zeta(1/2+\ii t+\ii h)|^2 \rd h\leq A \frac{\log T}{\sqrt{\log\log T}},
\]
for all $t\in [T,2T]$ except possibly on a subset of Lebesgue measure $\ll \frac{(\log A)\wedge \sqrt{\log\log T}}{A}$. 
\end{thm}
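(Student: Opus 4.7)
The plan is to upgrade Corollary \ref{cor: subcritical} at the critical pair $\theta = 0$, $\beta = \beta_c = 2$: there the argument produces the bound $A \log T$, off by the ballot/freezing factor $\sqrt{\log\log T}$ from Harper's sharp $A \log T/\sqrt{\log\log T}$. To recover this factor I would adapt the recursive scheme underlying Theorem \ref{thm: LD Selberg} by tracking a barrier event on the partial sums $(S_{t_\ell})_{\ell\geq 1}$ jointly in $h \in [-1,1]$, then performing a twisted second moment computation constrained to this barrier.

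Concretely, decompose
\[
\int_{|h|\leq 1} |\zeta(1/2+\ii t + \ii h)|^2 \rd h = I_{\text{good}}(t) + I_{\text{bad}}(t),
\]
where $I_{\text{good}}$ integrates only over $h$ whose partial sums satisfy $S_{t_\ell}(t+h) \leq t_\ell + \mathfrak{s} \log_{\ell+2} T$ at every scale $t_\ell$ from~\eqref{eqn: t1}, and $I_{\text{bad}}$ is the complementary piece. The barrier-failure set has small measure in $t$: after discretizing $[-1,1]$ at $\sim \log T$ well-separated points and applying Theorem \ref{thm: LD Selberg} at each intermediate scale $t_\ell$, a union bound over scales gives $\m\{t \in [T,2T] : I_{\text{bad}}(t) \neq 0\} \ll 1/\sqrt{\log\log T}$, which is absorbed into the $(\log A \wedge \sqrt{\log\log T})/A$ error.

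The heart of the matter is the first moment bound $\E[I_{\text{good}}] \ll \log T/\sqrt{\log\log T}$, from which Markov on $I_{\text{good}}$ yields an exceptional set of measure $\ll 1/A$. Replacing the barrier indicator by a smooth cutoff and expanding in Dirichlet polynomial data over primes $p \leq \exp(e^{t_1}) = T^{\oo(1)}$ reduces the task to a twisted second moment of $\zeta$, in the spirit of Lemma \ref{lemma_4 easy}. This in turn becomes a Gaussian integral over the joint law of $(S_{t_\ell})_\ell$ tilted by $e^{2 S_{\log\log T}}$; under this tilt, the Gaussian increments of $S_{t_\ell}$ acquire a drift equal to the barrier slope, and the integral reduces to the probability that a Gaussian random walk with this matching drift stays below its own linear boundary. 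The classical ballot/Brownian-bridge estimate then produces precisely $\asymp 1/\sqrt{\log\log T}$.

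The main obstacle is aligning the tilted drift with the barrier slope precisely enough that the ballot factor survives the error terms in the Gaussian moment estimates; this is exactly the freezing mechanism for critical Gaussian multiplicative chaos. A further subtlety, absent from the proof of Theorem \ref{thm: LD Selberg}, is that the integration in $h$ couples all $(\log T)$-many discretized points simultaneously, so one must handle the correlations between nearby $h$ values at the coarsest scales, as in the discretization step of~\cite{argbourad20}. Carrying this through is the main technical burden, and is ultimately the reason Harper's original argument develops its own specialized framework tailored to critical multiplicative chaos rather than proceeding by a direct recursive Gaussian comparison.
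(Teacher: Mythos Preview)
The paper does not prove Theorem~\ref{thm: Harper}: it is quoted from Harper~\cite{har19}, and Section~\ref{sect: critical} offers only a heuristic explanation of how the extra $1/\sqrt{\log\log T}$ arises, via the level-set measure $\mathcal S(V)$ and the ballot-type bound~\eqref{eqn: S critical} (itself imported from~\cite{argbourad20}). So there is no proof in the paper to match against; your proposal is an independent attempt at the result.

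Your plan is in the right spirit --- the ballot factor in the barrier-constrained second moment is exactly the freezing mechanism --- but your treatment of $I_{\text{bad}}$ has a concrete gap. You claim the set of $t$ with $I_{\text{bad}}(t)\neq 0$ has measure $\ll 1/\sqrt{\log\log T}$ and that this is ``absorbed into the $(\log A \wedge \sqrt{\log\log T})/A$ error.'' But $1/\sqrt{\log\log T}$ is independent of $A$, so it is \emph{not} absorbed when $A$ is large: already for $A=\log\log T$ the target error is $(\log\log\log T)/\log\log T$, far smaller than your bound. The correct handling is not to discard $I_{\text{bad}}$ on a set of $t$, but to bound its \emph{expectation} after decomposing over the first scale $\ell$ at which the barrier fails and applying a twisted second moment at that scale; the sum over scales (or over how far the barrier is overshot) is what generates the $\log A$ in Harper's exceptional-set bound. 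As a minor point, Theorem~\ref{thm: LD Selberg} concerns $\log|\zeta|$, not the partial sums $S_{t_\ell}$, so your union-bound step invokes the wrong statement; you would need the Gaussian tail bounds for $S_{t_\ell}$ directly, e.g.~Equation~\eqref{eqn: gaussian tail real}.

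The paper's heuristic in Section~\ref{sect: critical} takes a slightly different route: rather than a barrier-constrained second moment, it assumes the sharpened level-set bound~\eqref{eqn: S critical} (with the extra factor $|y|$ coming from the ballot estimate already built in) and integrates it against $e^{2V}$ as in the proof of Corollary~\ref{cor: subcritical}. This hides the barrier argument inside~\eqref{eqn: S critical}, but the underlying mechanism is the same as yours. Running the barrier directly on the twisted second moment, as you propose, is a legitimate alternative and is closer in structure to Harper's own argument; but as you yourself concede at the end, closing the details --- especially the $I_{\text{bad}}$ analysis and the correlations between nearby $h$ at coarse scales --- is the entire difficulty, and your sketch does not yet supply it.
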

The presence of the correction $1/\sqrt{\log\log T}$ is related to the phenomenon of critical Gaussian multiplicative chaos, see \cite{pow18}.
In Section \ref{sect: critical}, we explain how this correction appears in view of the Lebesgue measure of high points.
For $\theta>0$, where the IID heuristic prevails, such a correction should be absent as predicted by Theorem 1.6 (i) of  \cite{bokulo02}.
Hence, Equation \eqref{eqn: subcritical} is expected to be sharp to order one at $\beta=\beta_c$.\\

\noindent{\bf Notation}. 
Throughout the proofs, we use the probabilistic convention for random variables and often drop the dependence on $\tau$, which will always be taken uniform on $[T, 2T]$, to lighten the notation.
Most dramatically, we will simply write 
\[
\text{$\zeta$ for the random variable $\zeta(1/2+\ii\tau)$.}
\]

Another convenient notation is
\[
t=\log\log T.
\]
It turns out that $\log\log$ is the correct scale for the primes in the considered problems.
This is because the Dirichlet sums considered,  see for example~\eqref{eqn: S1} and \eqref{eqn: S} below, behave like a random walk on that scale.\\

\noindent{\bf Acknowledgements}
We thank Paul Bourgade and Maksym Radziwi{\l\l} for insightful discussions on the subject. 
The research of LPA was supported in part by NSF CAREER.
DMS-1653602.  Part of this work was conducted whilst EB participated in a program during the Fall 2021 semester hosted by the Mathematical Sciences Research Institute in Berkeley, California, which was supported by the NSF Grant No.~DMS-1928930.

\section{Proof of Theorem \ref{thm: LD Selberg}}
\label{sect: proof}
The proof is an adaptation of the recursive scheme of \cite{argbourad20}. 
First, we introduce some notations.
Consider the partial Dirichlet sums
\begin{equation}
  \label{eqn: S}
  S_k=\sum_{2\leq p \leq \exp(e^k)} \frac{\re\ p^{-\ii \tau}}{p^{1/2}}+\frac{\re \ p^{-2\ii\tau}}{2p}, \quad k\geq 1,
\end{equation}
with $S_0=0$.
(As opposed to the simpler Equation \eqref{eqn: S1}, we include here the square of primes within the definition.
This simplifies the application of Lemma \ref{lemma_23} below.)
For $S_k$ to be a good approximation for $\log|\zeta|$, the parameter $k$ must be taken close to $t$. 
With this in mind, $t$ is approached in a finite number of steps by iterated logarithms as in \eqref{eqn: t1}:
\begin{equation}
\label{eqn: t}
t_\ell=t - \mathfrak{s} \log_\ell t, \quad \ell \geq 1,
\end{equation}
with the convention that $t_0=0$.
The parameter $\mathfrak s$ here depends on $\alpha$. 
A good choice (reflecting the symmetry in $\alpha$) is
\begin{equation}
\label{eqn: s}
\mathfrak{s}=\frac{2\cdot 10^6}{(2-\alpha)^2 \alpha^2}.
\end{equation}
We will say more on this choice below Equation \eqref{eqn: B restrict}.
The last $\ell$, denoted by $\mathcal L$, is defined as the largest $\ell$ such that
\begin{equation}
\exp(10^6(t-t_\ell)^{10^5}e^{t_{\ell+1}})\leq \exp\Big(\frac{1}{100}e^t\Big)=T^{1/100}.
\end{equation}
Note that the left-hand side is
\[
\exp\Big(10^6(\mathfrak s \log_\ell t)^{10^5}\cdot \frac{e^t}{(\log_\ell t)^\mathfrak s}\Big),
\]
therefore the choice of $\mathfrak s$ ensures that such a $\mathcal L$ exists if $T$ is large enough.
By definition, we also have $\log_{\mathcal L} t=\OO(1)$ and $\log_{\mathcal L} t>0$.
The corresponding complex partial sums are also needed and are denoted by
  \begin{equation}
  \label{eqn: S tilde}
  \widetilde{S}_k=\sum_{2\leq p \leq \exp(e^k)} \frac{\ p^{-\ii \tau}}{p^{1/2}}+\frac{ \ p^{-2\ii\tau}}{2p}, \quad k\geq 1,
  \end{equation}
  and $\widetilde S_0=0$.
We stress that only the values of the partial sums at $t_\ell$, $1\leq \ell\leq \mathcal L$, are necessary. 

To approximate $\exp(-S_{t_\ell})$, we use the mollifiers:
\begin{equation}\label{eq:mollifier}
\mathcal{M}_\ell=\sum_{\substack{p|m\implies \log\log p\in(t_{\ell-1},t_\ell]\\\Omega_\ell(m)\leq(t_\ell-t_{\ell-1})^{10^5}}}\frac{\mu(m)}{m^{\frac{1}{2}+i\tau}},
\end{equation}
where $\Omega_\ell(m)$ is the number of prime factors of $m$ in $(\exp(e^{t_\ell}),\exp(e^{t_{\ell+1}})]$ with multiplicity, and $\mu(m)$ is the M\"{o}bius function.
 The proof will show that product $\mathcal{M}_{1}\cdots\mathcal{M}_{\ell}$ is typically a good approximation for $\exp(-S_{t_\ell})$.\\

The idea of the proof is to partition the event
\[
H=\{\log |\zeta(1/2+\ii \tau)|>V\}
\]
into recursively defined events that greatly restrict the values of the Dirichlet sums~\eqref{eqn: S} and~\eqref{eqn: S tilde}. 
It is expected that, if $\log |\zeta(1/2+\ii \tau)|>V$ and $V\sim \alpha t$, then the partial sum $S_{t_\ell}$ should be close to $\kappa t_\ell$ where
\begin{equation}\label{eq:slope}
  \kappa=\frac{V}{t}\sim \alpha.
\end{equation}
More precisely, consider for $1\leq \ell\leq \mathcal L$, the decreasing events
\begin{equation}
\label{eqn: ABCD}
\begin{aligned}
  A_\ell&=A_{\ell-1}\cap\{|\widetilde{S}_{t_\ell}-\widetilde{S}_{t_{\ell-1}}|\leq \mathcal{A}(t_\ell-t_{\ell-1})\}\\
  B_\ell&=B_{\ell-1}\cap\{S_{t_\ell}\leq \kappa t_\ell + \mathcal{B} \log _\ell t \}\\
  C_\ell&=C_{\ell-1}\cap\{S_{t_\ell}\geq \kappa t_\ell -\mathcal{C}\log _\ell t\}\\
  D_\ell&=D_{\ell-1}\cap\{|\zeta e^{-S_{t_\ell}}|\leq c_\ell|\zeta\mathcal{M}_1\cdots \mathcal M_\ell|+e^{-\mathcal{D}(t-t_{\ell-1})}\},
\end{aligned}
\end{equation}
where $c_\ell=\prod_{j=1}^\ell(1+e^{-t_{j-1}})$, and $A_{0},B_{0},C_{0},D_{0}=[T,2T]$ (the full sample space).
The parameters $\mathcal{A}, \mathcal{B}, \mathcal{C}, \mathcal{D}$ will be chosen carefully as discussed below.
For now, we simply observe that on the {\it good} event
\[
G_\ell=A_\ell \cap B_\ell \cap C_\ell \cap D_\ell,
\]
the partial sums are restricted in a narrow corridor between an upper and lower barrier:
\begin{equation}\label{eq:barriers}
U_\ell=\kappa t_\ell + \mathcal{B} \log _\ell t\qquad L_\ell=\kappa t_\ell -\mathcal{C}\log _\ell t.
\end{equation}
The auxiliary event $D_\ell$ ensures that $\exp(-S_{t_\ell})$ is well approximated by the mollifier, and $A_\ell$ is an {\it a priori estimate} needed for the estimates involving $C_\ell$ and $D_\ell$. 
The probability of $H=\{\log |\zeta(1/2+\ii \tau)|>V\}$ can then be decomposed over the $G_\ell$'s. 
The dominant contribution comes from $H\cap G_{\mathcal{L}}$ where the sums are restricted up to order one away from $t$.
The precise estimates are: 

\begin{prop}
\label{prop: first}
Let $V\sim \alpha t$ with $0<\alpha<2$. With the notation above, we have for some $\delta>0$ (dependent on $\alpha$) and $t$ large enough
\[
\PP(H\cap G_1^c)\ll \frac{e^{-V^2/t}}{\sqrt{t}} \cdot t^{-\delta}.
\]
\end{prop}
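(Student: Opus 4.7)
The plan is to apply the union bound $G_1^c = A_1^c\cup B_1^c\cup C_1^c\cup D_1^c$ and to control each of the four contributions $\PP(H\cap E)$ separately by $e^{-V^2/t}/\sqrt{t}\cdot t^{-\delta}$. The parameters $\mathcal{A},\mathcal{B},\mathcal{C},\mathcal{D}$ will be chosen at the end, large enough in $\alpha$ so that each piece yields the required $\delta>0$.

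For the auxiliary events $E\in\{A_1^c,D_1^c\}$, I would combine Markov on $H$ with Cauchy--Schwarz:
\[
\PP(H\cap E)\;\leq\; e^{-2V}\E[|\zeta|^2\1_E]\;\leq\; e^{-2V}\sqrt{\E[|\zeta|^4]}\sqrt{\PP(E)}\;\ll\; e^{-2V+2t}\PP(E)^{1/2},
\]
using the classical fourth-moment bound $\E[|\zeta|^4]\ll(\log T)^4=e^{4t}$. Gaussian-type moment bounds on the Dirichlet polynomial $\widetilde S_{t_1}$ give $\PP(A_1^c)\ll\exp(-c\mathcal{A}^2 t_1)$, while the mollifier-approximation lemma (the one saying $\mathcal M_1$ is a good proxy for $e^{-\widetilde S_{t_1}}$) gives $\PP(D_1^c)\ll\exp(-c\mathcal{D} t)$. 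For $\mathcal{A},\mathcal{D}$ large enough, the overall exponent is $\leq -V^2/t-\eta t$ for some $\eta>0$, much stronger than needed.

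The main work lies in the barrier event $B_1^c$ (the case $C_1^c$ is symmetric). After splitting off $D_1^c$, the defining inequality of $D_1$ yields
\[
|\zeta|^2\;\leq\; 2c_1^2\,e^{2S_{t_1}}|\zeta\mathcal M_1|^2+2\,e^{2S_{t_1}-2\mathcal D t},
\]
and applying Markov on $H$ reduces the problem, up to a harmless error, to bounding $\E[e^{2S_{t_1}}|\zeta\mathcal M_1|^2\1_{B_1^c}]$. The heart of the argument is the near-factorization
\[
\E[e^{2S_{t_1}}|\zeta\mathcal M_1|^2\1_{B_1^c}]\;\ll\; \E[|\zeta\mathcal M_1|^2]\cdot \E[e^{2S_{t_1}}\1_{B_1^c}],
\]
expressing the fact that, through the M\"obius cancellation in $\mathcal M_1$, the product $\zeta\mathcal M_1$ depends essentially on primes $p>\exp(e^{t_1})$ while $S_{t_1}$ depends on $p\leq\exp(e^{t_1})$. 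Rigorously, one expands $|\zeta\mathcal M_1|^2\1_{B_1^c}$ as a Dirichlet series in the small primes and invokes the twisted second/fourth-moment estimates proved in the next section. The first factor on the right is $O(t^C)$ by the standard mollifier bound; the second is a tilted Gaussian moment of a Dirichlet polynomial, which completes the square to
\[
\E[e^{2S_{t_1}}\1_{S_{t_1}>U_1}]\;\approx\; \frac{e^{t_1}}{\sqrt{t_1}}\,\overline{\Phi}\!\left(\frac{U_1-t_1}{\sqrt{t_1/2}}\right).
\]
Multiplying by $e^{-2V}$ and substituting $V=\kappa t$, $U_1=\kappa t_1+\mathcal B\log t$, $t-t_1=\mathfrak s\log t$, collecting exponents produces $\ll e^{-V^2/t}/\sqrt{t}\cdot t^{-\mathcal B^2/\mathfrak s+O(1)}$. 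The symmetric argument for $C_1^c$ gives $t^{-\mathcal C^2/\mathfrak s+O(1)}$, so taking $\mathcal B,\mathcal C$ large in $\alpha$ closes the estimate.

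The principal obstacle is the near-factorization. Since $S_{t_1}$ and $\mathcal M_1$ share their prime support, no direct probabilistic independence is available: the decoupling must be extracted from the Dirichlet-polynomial algebra, and this is precisely what the twisted fourth-moment estimate (the Lemma~9 referenced in the introduction) is designed to furnish. It is also where the hypothesis $\alpha<2$ enters, since that moment bound requires the length of $\zeta\mathcal M_1$ to be compatible with $\kappa<2$. A secondary subtlety concerns the regime $\kappa\leq 1$: the Markov tilt $e^{2S_{t_1}}$ overshoots the typical mean of $S_{t_1}$, and the naive second-moment bound would lose the exponent $-V^2/t$. This is repaired by a two-parameter Markov that tilts $|\zeta|$ with exponent $\lambda_1=\kappa$ and $e^{S_{t_1}}$ with $\lambda_2\sim 2\mathcal B/\mathfrak s$, matched to the optimal saddle $S_{t_1}\approx\kappa t_1$; equivalently, by a fractional-moment variant of the same calculation. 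Either implementation performs the same saddle-point bookkeeping and yields the claimed $e^{-V^2/t}/\sqrt{t}\cdot t^{-\delta}$ bound.
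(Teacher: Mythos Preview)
Your decomposition is fine, but the treatment of $B_1^c$ and $C_1^c$ has a real gap, stemming from the fact that these two events are \emph{not} symmetric. The event $B_1^c=\{S_{t_1}>U_1\}$ is already rare on its own, since $U_1\approx\kappa t_1$ lies far in the Gaussian tail of $S_{t_1}$; the paper exploits this and simply bounds $\PP(B_1^c)\ll e^{-U_1^2/t_1}\approx e^{-\kappa^2 t}\,t^{\kappa^2\mathfrak s-2\kappa\mathcal B}$ directly from the Gaussian moments of the Dirichlet polynomial, with no reference to $H$, $D_1$, or any mollifier factorization (and likewise $\PP(A_1^c)$ is bounded directly). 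Your route through the second moment of $\zeta\mathcal M_1$ does overshoot for $\kappa\le1$ as you note, but the cure is simply to drop $H$, not to introduce fractional tilts.

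The genuine gap is $C_1^c=\{S_{t_1}<L_1\}$, which has probability close to $1$, so here $H$ is indispensable. After partitioning on $S_{t_1}\in(u,u+1]$ with $u<L_1$ and invoking $D_1$, a \emph{second}-moment Markov on $\zeta\mathcal M_1$ yields terms $e^{-2(V-u)}\cdot e^{t-t_1}\cdot e^{-u^2/t_1}$; completing the square centres the implicit Gaussian at $u=t_1$, and for $\kappa>1$ this centre lies inside the summation range, so the sum is $\asymp e^{(1-2\kappa)t}$, strictly worse than $e^{-\kappa^2 t}$ whenever $\kappa\ne1$. The paper instead applies Markov with the \emph{fourth} moment (Lemma~\ref{lemma_4 easy}), obtaining $e^{-4(V-u)}\cdot e^{4(t-t_1)}\cdot e^{-u^2/t_1}$; the sum over $u<L_1$ then carries the factor $e^{-(4-2\kappa)(\kappa t_1-u)}$, which is convergent precisely for $\kappa<2$, and this is exactly where the hypothesis $\alpha<2$ enters the argument. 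Your proposed repair---tilting $|\zeta|$ by a fractional exponent $\lambda_1=\kappa$, or ``a fractional-moment variant''---requires control of $\E[|\zeta|^{2\kappa}]$, which is Corollary~\ref{cor: fractional} and hence circular. The twisted fourth-moment input is not optional here.
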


\begin{prop}
\label{prop: induction}
Let $V\sim \alpha t$ with $0<\alpha<2$. With the notation above, we have for $1\leq \ell \leq \mathcal L-1$, some $\delta>0$ (dependent on $\alpha$ but not $\ell$), and $t$ large enough
\[
\PP(H\cap G_\ell\cap G_{\ell+1}^c)\ll \frac{e^{-V^2/t}}{\sqrt{t}} \cdot (\log_\ell t)^{-\delta}.
\]
\end{prop}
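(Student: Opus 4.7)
The plan is to decompose the level-$(\ell+1)$ failure into the four primitive failures. Since $G_\ell \cap G_{\ell+1}^c = G_\ell \cap (A_{\ell+1}^c \cup B_{\ell+1}^c \cup C_{\ell+1}^c \cup D_{\ell+1}^c)$, a union bound reduces matters to bounding $\PP(H \cap G_\ell \cap F_{\ell+1}^c)$ for each $F \in \{A,B,C,D\}$. All four pieces are handled via the Markov-type estimate $\one_H \le e^{-2V}|\zeta|^2$, combined with the pointwise approximation coming from $D_\ell$,
\[
|\zeta|^2 \le 2c_\ell^2 e^{2S_{t_\ell}}|\zeta\mathcal{M}_1\cdots\mathcal{M}_\ell|^2 + 2 e^{2S_{t_\ell} - 2\mathcal{D}(t - t_{\ell-1})},
\]
and the upper-barrier constraint $S_{t_\ell}\le U_\ell$ from $B_\ell$. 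After these substitutions the bound reduces to the expectation of $|\zeta\mathcal{M}_1\cdots\mathcal{M}_\ell|^2$ against the indicator of the level-$(\ell+1)$ failure, multiplied by a deterministic factor that will produce the Gaussian prefactor $e^{-V^2/t}/\sqrt{t}$ once the cost of localising $S_{t_\ell}$ close to $U_\ell$ is tracked.

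The hard case is the upper-barrier breach $B_{\ell+1}^c$. Here the increment $S_{t_{\ell+1}} - S_{t_\ell}$ is carried by primes in $(\exp(e^{t_\ell}), \exp(e^{t_{\ell+1}})]$, hence essentially orthogonal to $\mathcal{M}_1,\ldots,\mathcal{M}_\ell$; introducing an exponential tilt $e^{2\lambda(S_{t_{\ell+1}} - S_{t_\ell})}$, invoking the twisted fourth moment of Lemma \ref{lemma_9} for the $|\zeta\mathcal{M}_1\cdots\mathcal{M}_\ell|^2$ factor, and the Gaussian moment estimate of Lemma \ref{lem: Gaussian moments real} for the tilt, converts the barrier excess $\mathcal{B}\log_\ell t$ into decay of order $\exp(-c\log_\ell t)$, much stronger than the target $(\log_\ell t)^{-\delta}$. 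The lower-barrier breach $C_{\ell+1}^c$ is treated symmetrically: on $H$ with $S_{t_{\ell+1}}<L_{\ell+1}$ the discrepancy $\log|\zeta|-S_{t_{\ell+1}}$ must be anomalously large, and the same twisted-fourth-moment machinery delivers the bound. The mollifier failure $D_{\ell+1}^c$ is a direct Chebyshev estimate using Lemma \ref{lemma_23}, and the a priori failure $A_{\ell+1}^c$ is a Chebyshev bound on a high moment of the complex increment $\widetilde{S}_{t_{\ell+1}} - \widetilde{S}_{t_\ell}$ via Lemma \ref{lemma_3}.

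The main obstacle is calibrating the constants $\mathcal{A},\mathcal{B},\mathcal{C},\mathcal{D}$ (allowed to depend on $\alpha$ but required to be uniform in $\ell$) so that all four bounds simultaneously yield savings $(\log_\ell t)^{-\delta}$ with a common $\delta>0$, and ensuring that the error term $e^{2S_{t_\ell}-2\mathcal{D}(t-t_{\ell-1})}$ from the $D_\ell$ substitution is absorbed without eroding the Gaussian prefactor. The restriction $\alpha<2$ enters precisely through the twisted fourth moment of Lemma \ref{lemma_9}, which constrains the admissible Dirichlet polynomial length.
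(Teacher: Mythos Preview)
Your four-way split of $G_\ell\cap G_{\ell+1}^c$ is correct in spirit and matches the paper's starting point, but the paper's decomposition is more refined: it handles $C_{\ell+1}^c$ on the event $H\cap A_{\ell+1}\cap D_{\ell+1}\cap G_\ell$ and handles $D_{\ell+1}^c$ on $A_{\ell+1}\cap G_\ell$, i.e.\ it first secures the level-$(\ell+1)$ auxiliary events before treating the lower barrier. There are two genuine gaps in your plan.

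\textbf{Second moment versus fourth moment.} Your uniform starting estimate $\one_H\le e^{-2V}|\zeta|^2$ is not what the paper uses, and it is insufficient for $C_{\ell+1}^c$ in the full range $0<\alpha<2$. The paper applies Markov with the \emph{fourth} power, producing $e^{-4(V-(u+v))}\E[|\zeta\mathcal{M}_1\cdots\mathcal{M}_{\ell+1}|^4\cdot|\mathcal{Q}_\ell|^2\,\one(\cdots)]$ and then Lemma~\ref{lemma_4}. After the change of variables $\bar u=u-\kappa t_\ell$, $\bar v=v-\kappa(t_{\ell+1}-t_\ell)$, the tail sum over $\bar u+\bar v\le -\mathcal{C}\log_{\ell+1}t$ carries the exponent $(4-2\kappa)$, which is positive for all $\kappa<2$. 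With a second-moment Markov bound this exponent becomes $(2-2\kappa)$, and for $\alpha>1$ the sum diverges. Your reference to ``the twisted fourth moment of Lemma~\ref{lemma_9} for the $|\zeta\mathcal{M}_1\cdots\mathcal{M}_\ell|^2$ factor'' does not resolve this: Lemma~\ref{lemma_9} bounds a genuine fourth moment and cannot be applied to a squared expression short of a Cauchy--Schwarz step that halves the exponents.

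\textbf{Level-$\ell$ versus level-$(\ell+1)$ mollifier.} You propose to work only with $D_\ell$, obtaining control of $|\zeta e^{-S_{t_\ell}}|$ via $|\zeta\mathcal{M}_1\cdots\mathcal{M}_\ell|$. But for $C_{\ell+1}^c$ you need to exploit that $\log|\zeta|-S_{t_{\ell+1}}$ is large (as you yourself note), and the increment $S_{t_{\ell+1}}-S_{t_\ell}$ lives on primes in $(\exp(e^{t_\ell}),\exp(e^{t_{\ell+1}})]$, which are \emph{not} decoupled from $\zeta\mathcal{M}_1\cdots\mathcal{M}_\ell$. The paper resolves this by first intersecting with $A_{\ell+1}$, using Lemma~\ref{lemma_23} to obtain $D_{\ell+1}$, and then working with $|\zeta\mathcal{M}_1\cdots\mathcal{M}_{\ell+1}|^4$, whose ``prime support'' lies above $\exp(e^{t_{\ell+1}})$ and therefore decouples cleanly from the increment via Lemma~\ref{lemma_4}. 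Finally, note that the paper does \emph{not} use $H$ at all for $A_{\ell+1}^c$ and $B_{\ell+1}^c$: the Gaussian prefactor $e^{-V^2/t}/\sqrt t$ there comes purely from localising $S_{t_\ell}$ through Lemma~\ref{lemma_3}, and your later description of these two cases (high-moment Chebyshev on the increment) is in fact correct and contradicts your opening claim that all four cases begin with $\one_H\le e^{-2V}|\zeta|^2$.
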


\begin{prop}
\label{prop: last}
Let $V\sim\alpha t$ with $0<\alpha<2$.  With the notation above, we have for $t$ large enough
\[
\PP(H\cap G_{\mathcal L})\ll\frac{1}{\sqrt{t}}e^{-V^2/t}.
\]
\end{prop}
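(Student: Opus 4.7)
The plan is to reduce $\PP(H\cap G_{\mathcal L})$, via the mollifier inequality of $D_{\mathcal L}$ and a second-moment Markov bound, to a twisted second moment of the mollified zeta $Y:=\zeta\,\mathcal M_1\cdots \mathcal M_{\mathcal L}$ restricted to $G_{\mathcal L}$, and then match the latter against the Gaussian-walk probability that $S_{t_{\mathcal L}}$ sits in the corridor $[L_{\mathcal L},U_{\mathcal L}]$. Concretely, on $H\cap G_{\mathcal L}$ one has $|\zeta|>e^V$ and $S_{t_{\mathcal L}}\leq U_{\mathcal L}=V+(\mathcal B-\kappa\mathfrak s)\log_{\mathcal L}t=V+\OO(1)$ (using $\log_{\mathcal L}t=\OO(1)$ from the definition of $\mathcal L$), so the inequality defining $D_{\mathcal L}$ yields
\[
e^{V-U_{\mathcal L}}\ \leq\ |\zeta|\,e^{-S_{t_{\mathcal L}}}\ \leq\ c_{\mathcal L}|Y|+e^{-\mathcal D(t-t_{\mathcal L-1})}.
\]
Taking $\mathcal D$ large (depending on $\alpha$) so the correction is absorbed, this forces $|Y|\geq c_\alpha>0$, whence Markov's inequality gives
\[
\PP(H\cap G_{\mathcal L})\ \ll_\alpha\ \E\bigl[|Y|^2\one_{G_{\mathcal L}}\bigr].
\]

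It then remains to show $\E[|Y|^2\one_{G_{\mathcal L}}]\ll\tfrac{1}{\sqrt t}e^{-V^2/t}$. I would expand $|\mathcal M_1\cdots\mathcal M_{\mathcal L}|^2$ as a single Dirichlet polynomial, whose length is at most $\exp(10^6(t-t_{\mathcal L})^{10^5}e^{t_{\mathcal L+1}})\leq T^{1/100}$ by the very definition of $\mathcal L$, and apply a twisted second-moment estimate for $|\zeta|^2$ against such a short polynomial (e.g.~Lemma~\ref{lemma_4 easy} or~\ref{lemma_3}). Because each $\mathcal M_\ell$ is a truncation of $\exp(-(\widetilde S_{t_\ell}-\widetilde S_{t_{\ell-1}}))$, the Euler-product computation in this second-moment estimate collapses the arithmetic sum to a Gaussian-type moment of the shape
\[
\E\bigl[\,e^{-2S_{t_{\mathcal L}}}\,e^{2R}\,\one_{G_{\mathcal L}}\bigr],
\]
where $R$ captures the contribution of primes $p>\exp(e^{t_{\mathcal L}})$ and has variance $t-t_{\mathcal L}=\OO(1)$. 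Since $R$ is essentially independent of $(S_{t_\ell})_{\ell\leq \mathcal L}$ and $\E[e^{2R}]=\OO(1)$ by Lemma~\ref{lem: Gaussian moments real}, this is dominated by a constant times $\PP(G_{\mathcal L})$, which is in turn controlled by the endpoint probability $\PP(S_{t_{\mathcal L}}\in[L_{\mathcal L},U_{\mathcal L}])$; the latter, by the Gaussian density estimate for $S_{t_{\mathcal L}}$ (variance $\sim t/2$), contributes exactly $\frac{1}{\sqrt t}\exp(-V^2/t)$ up to constants.

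The main obstacle will be the twisted second-moment computation: it must be evaluated sharply enough to produce the Gaussian factor $\frac{1}{\sqrt t}e^{-V^2/t}$ without any extra polynomial-in-$t$ loss coming from the length of the mollifier or from the finite cutoff $\Omega_\ell(m)\leq(t_\ell-t_{\ell-1})^{10^5}$, and the identification of the Dirichlet-polynomial average with a constrained Gaussian-walk expectation must be carried out uniformly across the good event $G_{\mathcal L}$ so that the arithmetic errors remain below the exponentially small target.
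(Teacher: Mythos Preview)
Your route diverges from the paper's in the moment you take after invoking $D_{\mathcal L}$. The paper partitions on $S_{t_{\mathcal L}}\in(v,v+1]$ for $v\in[L_{\mathcal L},U_{\mathcal L}]$, uses $D_{\mathcal L}$ on each piece to obtain $|\zeta\mathcal M_1\cdots\mathcal M_{\mathcal L}|\gg e^{V-v}$, and then applies Markov's inequality with the \emph{fourth} power together with Equation~\eqref{eqn: lemma 4 easy} of Lemma~\ref{lemma_4}:
\[
\PP(H\cap G_{\mathcal L})\ \ll\ \sum_{v\in[L_{\mathcal L},U_{\mathcal L}]} e^{-4(V-v)}\,e^{4(t-t_{\mathcal L})}\,\frac{e^{-v^2/t_{\mathcal L}}}{\sqrt t}.
\]
Since $\log_{\mathcal L}t=\OO(1)$, both the number of summands and each individual term are $\OO_\alpha(1)\cdot\frac{1}{\sqrt t}e^{-\kappa^2 t}$, and the bound follows at once. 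No new second-moment input is needed; the fourth-moment machinery of Lemma~\ref{lemma_4} (resting on the twisted fourth moment of Lemma~\ref{lemma_9}) is simply applied one last time, exactly as in the inductive step.

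Your second-moment plan is conceptually reasonable, but the key step is not supported by the lemmas you cite. Lemma~\ref{lemma_4 easy} is a fourth-moment estimate, and Lemma~\ref{lemma_3} bounds $\E[|\mathcal Q|^2\one(\cdots)]$ for a polynomial $\mathcal Q$ supported on large primes \emph{without} any factor of $\zeta$; neither controls $\E[|\zeta\mathcal M_1\cdots\mathcal M_{\mathcal L}|^2\one_{G_{\mathcal L}}]$. What you would actually need is a second-moment analogue of Lemma~\ref{lemma_4}, built on a twisted second moment of $\zeta$ in place of Lemma~\ref{lemma_9}. Such a statement is certainly available in the literature and is in fact easier than the fourth-moment version, but it is not developed in this paper, so your argument would require an additional ingredient. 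Your heuristic display $\E[e^{-2S_{t_{\mathcal L}}}e^{2R}\one_{G_{\mathcal L}}]$ is also off: since $\mathcal M_1\cdots\mathcal M_{\mathcal L}\approx e^{-\widetilde S_{t_{\mathcal L}}}$ already cancels the small-prime contribution to $\zeta$, the correct heuristic for $|Y|^2$ is $e^{2R}$ alone (with $R$ the tail over primes $>\exp(e^{t_{\mathcal L}})$), giving $\E[|Y|^2\one_{G_{\mathcal L}}]\approx\E[e^{2R}]\,\PP(G_{\mathcal L})\ll\PP(G_{\mathcal L})$. That is indeed the right size, but not for the reason you wrote; and making it rigorous still requires the missing twisted second-moment lemma rather than either of the results you invoked.
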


The theorem is a simple consequence of the three propositions.
\begin{proof}[Proof of Theorem \ref{thm: LD Selberg}]
It suffices to notice that
\[
\PP(H)=\PP(H\cap G_1^c)+\sum_{\ell=1}^{\mathcal{L}-1} \PP(H\cap G_\ell\setminus G_{\ell+1})+\PP(H\cap G_{\mathcal{L}}).
\]
The result follows by applying Propositions \ref{prop: first}, \ref{prop: first}, \ref{prop: last}.
\end{proof}

As mentioned above, the parameters in \eqref{eqn: ABCD} need to be chosen in a delicate manner.
As we shall see from the proof (cf. Equations~\eqref{eqn: B const1} and~\eqref{eqn: B const2}), the choice of $\mathcal B$ must satisfy the following restrictions.
\begin{equation}
  \label{eqn: B restrict}
  \begin{aligned}
    1+\alpha^2 \mathfrak{s}-2\alpha \mathcal{B}&<0\\
    \mathcal{B}-\alpha \mathfrak{s}&<0.
  \end{aligned}
\end{equation}
The first equation forces $\mathcal B$ to be proportional to $1/\alpha$ to handle small $\alpha$'s.
In turn, the second equation leads to $\mathfrak s>1/\alpha^2$, motivating in part the choice of $\mathfrak s$ in \eqref{eqn: s}.
With this choice, the defining inequalities for $\mathcal B$ becomes 
\begin{align*}
  \frac{1}{2\alpha}+\frac{10^6}{\alpha(2-\alpha)^2}<&\mathcal{B}<\frac{10^6}{\alpha(2-\alpha)^2}+\frac{10^6}{\alpha(2-\alpha)^2}.
\end{align*}
This is a non-empty interval since $\alpha>0$.  Therefore, a valid choice is
  \begin{equation}\label{eq:b_coeff}
    \mathcal{B}=\frac{3\cdot 10^6}{2\alpha(2-\alpha)^2}+\frac{1}{4\alpha}.
  \end{equation}
 
The restrictions on $\mathcal C$ (cf. Equations~\eqref{eq:c_1 bound} and~\eqref{eqn: C const2}) will be
\begin{equation}
\label{eqn: C restrict}
\mathcal C>\frac{1}{2(2-\alpha)}\Big\{ 1+(2-\alpha)^2\mathfrak{s}\Big\}.
\end{equation}
(We note in passing that this is the first constraint for $\mathcal{B}$ in \eqref{eqn: B restrict}, after the transformation $\alpha\mapsto 2-\alpha$.)
Therefore, a valid choice for $\mathcal{C}$ is
  \begin{equation}\label{eq:c_coeff}
    \mathcal{C}=\frac{3\cdot 10^6}{2\alpha^2(2-\alpha)}+\frac{1}{4(2-\alpha)}.
  \end{equation}
  This choice implies the upper bound $\mathcal C<(2-\alpha) \mathfrak s$. 
  
The parameter $\mathcal A$ will need to satisfy (cf. Equations~\eqref{eqn: A const} and~\eqref{eqn: A const2}):
\begin{equation}
\label{eqn: A restrict}
\mathcal A>\frac{\alpha^2}{4}+\frac{\alpha \mathcal C}{2\mathfrak s}+2.
\end{equation}
This choice implies in particular
\begin{equation}
\label{eqn: A restrict 2}
\mathcal A^2>\alpha^2 + \frac{2\alpha \mathcal C}{\mathfrak s}+4.
\end{equation}
For example, one can take 
  \begin{equation}
    \mathcal{A}=10^3,\label{eq:a_coeff}
  \end{equation}
since, with the choices of $\mathcal{C}$ and $\mathfrak{s}$ above, and for $0<\alpha<2$, we have
\begin{equation}
\begin{aligned}
    \mathcal A=10^3&>4+\frac{\alpha(2-\alpha)}{8}(\alpha^2+3)\\
    &>\frac{\alpha^2}{4}+\frac{\alpha \mathcal C}{2\mathfrak s}+2.
 \end{aligned}
 \end{equation}

Finally, the conditions on $\mathcal D$ will be as in \cite{argbourad20}
\begin{equation}
  \label{eqn: D restrict}
  \mathcal{D}=10^4.
\end{equation}

\subsection{Proof of Proposition \ref{prop: first}}
First, notice that
\[
H\cap G_1^c\subset  \ \ A_1^c \  \cup \ B_1^c \  \cup \ ( H\cap C_1^c\cap A_1\cap D_1)\  \cup \ ( D_1^c\cap A_1).
\]
We estimate the probability of the four events in the union on the right individually.

We first evaluate $A_1^c$:
\[
\PP(A_1^c)=\PP(|\widetilde S_{t_1}|>\mathcal{A} t_{1}).
\]
Equation \eqref{eqn: gaussian tail cplx} of the appendix is applicable with the choice $q=\lceil2 \mathcal A^2 t_1\rceil$, and implies that this is
\[
\ll \sqrt{t_{1}}\cdot \exp(-\mathcal{A}^2 t_1).
\]
Since $\mathcal{A}=10^3$, for some $\delta>0$ this is clearly
\begin{equation}\label{eq:a_1 bound}
  \ll\frac{e^{-\kappa^2 t}}{\sqrt t}t^{-\delta}.
\end{equation}

Turning to $B_1^c$, and applying Markov's inequality for some $q>1$ yields
\[
  \PP(B_1^c)\leq  \PP(S_{t_1}> U_1)\leq U_1^{-2q}\E[|S_{t_1}|^{2q}].
\]
Equation~\eqref{eqn: moment bound sqrt} then applies with $q=\lceil U_1^2/t_1\rceil$, giving
\begin{equation}
  \label{eqn: B const1}
  \PP(B_1^c)\ll \sqrt{t_1} e^{-U_1^2/t_1}\ll \frac{e^{-\kappa^2 t}}{\sqrt{t}}\cdot  t^{1+\kappa^2\mathfrak{s}-2\kappa \mathcal{B}}.
\end{equation}
By the choice of $\mathcal{B}$ in Equation \eqref{eqn: B restrict}, one has $1+\alpha^2\mathfrak{s}-2\alpha\mathcal{B}<0$. Since $\kappa=\alpha+\oo(1)$ by Equation \eqref{eq:slope}, the above is
\begin{equation}\label{eq:b_1 bound}
  \ll \frac{e^{-\kappa^2 t}}{\sqrt{t}}\cdot  t^{-\delta}
\end{equation}
for some $\delta>0$, depending on $\alpha$ and different from \eqref{eq:a_1 bound}.

To evaluate $ H\cap C_1^c \cap A_1\cap D_1$, we require the following lemma, proved in Section \ref{sect: lemma 4}. 
\begin{lem}
\label{lemma_4 easy}
  For $w$ with $|w|\leq 4 t_1$, we have
   \begin{equation}
  \label{eqn: lemma 4 super easy}
   \E\left[|\zeta \mathcal{M}_{1}|^4\ \mathbf{1}\left(S_{t_1}\in(w,w+1]\right)\right]
      \ll e^{4(t-t_{1})}\cdot \frac{e^{-w^2/t_1}}{\sqrt{t}}.
\end{equation}
\end{lem}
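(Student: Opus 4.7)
The estimate asserts that the fourth moment of $\zeta\mathcal M_{1}$ restricted to $\{S_{t_{1}}\in(w,w+1]\}$ carries a Gaussian density in $w$. My plan is to exploit the quasi-independence between primes on different scales: $S_{t_{1}}$ depends only on primes $p\le\exp(e^{t_{1}})$, while $\zeta\mathcal M_{1}$ is essentially controlled by primes $p>\exp(e^{t_{1}})$, since the mollifier $\mathcal M_{1}$ cancels the Euler factors of $\zeta$ on the smaller range. Heuristically the expectation should factorise as the Gaussian density of $S_{t_{1}}$ at $w$ (which is $\ll e^{-w^{2}/t_{1}}/\sqrt{t}$ since $S_{t_{1}}$ has variance close to $t_{1}/2$) times $\E[|\zeta\mathcal M_{1}|^{4}]\ll e^{4(t-t_{1})}$, the latter coming from a twisted fourth moment estimate of the type in Lemma~\ref{lemma_3}.

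To make the factorisation rigorous I would dominate the indicator by $|B(\tau)|^{2}$ for a short Dirichlet polynomial $B$ supported on primes $\le\exp(e^{t_{1}})$, constructed as a Hermite-truncated Gaussian bump in $S_{t_{1}}$ peaked at $w+\tfrac12$. Taking the truncation level $q\sim w^{2}/t_{1}$ is enough to force $|B|^{2}\ge \1(S_{t_{1}}\in(w,w+1])$, while by orthogonality of $\{p^{-\ii\tau}\}$ one keeps $\E[|B|^{2}]\ll e^{-w^{2}/t_{1}}/\sqrt{t}$. Applying a twisted fourth moment estimate in the spirit of Lemmas~\ref{lemma_3}--\ref{lemma_9} to the enlarged mollifier $M:=\mathcal M_{1}\cdot B$ then yields $\E[|\zeta\mathcal M_{1}|^{4}\,\1(S_{t_{1}}\in(w,w+1])]\le \E[|\zeta M|^{4}] \ll e^{4(t-t_{1})}\,\E[|B|^{2}]$, which is exactly the claimed bound.

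The main obstacle is ensuring that the length of $B$ stays within the range of validity of the twisted fourth moment estimate, which demands length $\ll T^{1/100}$. Since each Hermite polynomial $H_{k}(S_{t_{1}})$ has a Dirichlet expansion of length at most $(\exp(e^{t_{1}}))^{k}$, the length of $B$ is at most $\exp(q\,e^{t_{1}})$. For $|w|\le 4t_{1}$ one has $q\le 16\,t_{1}$, so the length is $\exp(O(t_{1}\,e^{t_{1}}))$. This is $\ll T^{1/100}=\exp(e^{t}/100)$ precisely because $\mathfrak s$ in~\eqref{eqn: s} is chosen large, making the separation $t-t_{1}=\mathfrak s\log t$ big enough; verifying this length check is the delicate technical point, after which the proof reduces to the standard twisted fourth moment bookkeeping used in Heap--Radziwi{\l}{\l}--Soundararajan.
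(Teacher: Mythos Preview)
Your overall strategy is the same as the paper's: dominate $\mathbf 1(S_{t_1}\in(w,w+1])$ by $|B|^2$ for a short Dirichlet polynomial $B$ supported on primes $\le\exp(e^{t_1})$, then invoke the twisted fourth moment Lemma~\ref{lemma_9} (with $\ell=0$ and $\mathcal Q=B$) to obtain $\E[|\zeta\mathcal M_1|^4|B|^2]\ll e^{4(t-t_1)}\E[|B|^2]$, and finally show $\E[|B|^2]\ll e^{-w^2/t_1}/\sqrt t$ by passing to the random model. There is a notational slip: you write $\E[|\zeta M|^4]$ with $M=\mathcal M_1 B$, but that produces $|B|^4$, not $|B|^2$. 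Lemma~\ref{lemma_9} takes $|\mathcal Q|^2$ alongside $|\zeta\mathcal M_1|^4$, so the correct chain is $\E[|\zeta\mathcal M_1|^4\mathbf 1(\cdots)]\le\E[|\zeta\mathcal M_1|^4|B|^2]\ll e^{4(t-t_1)}\E[|B|^2]$; $B$ is not absorbed into the mollifier.

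The genuine gap is in the construction of $B$. A ``Hermite-truncated Gaussian bump'' with truncation level $q\sim w^2/t_1$ cannot simultaneously satisfy $|B|^2\ge\mathbf 1(S_{t_1}\in(w,w+1])$ and $\E[|B|^2]\ll e^{-w^2/t_1}/\sqrt t$ across the whole range $|w|\le 4t_1$. For $w$ of size $o(\sqrt{t_1})$ your $q$ is $o(1)$, so $B$ is essentially a constant; then $|B|^2\ge 1$ on the interval forces $\E[|B|^2]\ge 1$, not $\ll 1/\sqrt t$. More generally, a degree-$q$ polynomial majorant can at best mimic a Chernoff tilt, which yields $e^{-w^2/t_1}$ without the crucial local factor $1/\sqrt t$; capturing the \emph{density} (not just the tail) requires a genuinely localized majorant, and hence many more terms than $w^2/t_1$.

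The paper handles this via the Beurling--Selberg type majorant $G_{\Delta,A}$ of \cite[Lemma~6]{argbourad20} (see Lemma~\ref{lem: D} here): it satisfies $\mathbf 1_{[0,\Delta^{-1}]}\le G_{\Delta,A}$ pointwise, $G_{\Delta,A}\le\mathbf 1_{[-\Delta^{-A/2},\Delta^{-1}+\Delta^{-A/2}]}+ce^{-\Delta^{A-1}}$, and has Fourier support in $[-\Delta^{2A},\Delta^{2A}]$. One then Taylor-truncates $e^{2\pi i\xi x}$ at level $\nu=\Delta_1^{10A}$ (with $A=20$, so $\nu=t_1^{200}$, far larger than your $q\le 16t_1$) to obtain the Dirichlet polynomial $\mathcal D_{\Delta_1,A}$. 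After Lemma~\ref{lemma_9}, one reverses the approximation (Lemma~\ref{lem: D to Y}) and applies the saddle-point estimate Lemma~\ref{le:saddlepoint} to get the $e^{-w^2/t_1}/\sqrt t$. The length is $\exp(2e^{t_1}t_1^{200})$, which is still $\le T^{1/100}$ because $\mathfrak s\ge 2\cdot 10^6$; so the length check you rightly flag does pass, but only because the required truncation level is polynomial in $t_1$ rather than $O(t_1)$.
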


Let us explain the intuition behind the result. 
One should think of $\zeta \mathcal{M}_{1}$ as a random Euler product involving primes larger than $\exp(e^{t_{1}})$.
Furthermore, Selberg's result suggests its logarithm should be distributed like a Gaussian random variable of variance $t-t_{1}$. 
This explains the first factor $e^{4(t-t_{1})}$ as the contribution from the  moment generating function of such a variable. 
As explained in Section \ref{sect: lemmas}, the indicator function can be approximated by a suitable Dirichlet polynomial involving primes less than $S_{t_1}$. 
Since primes should behave independently, it is not surprising to see the decoupling between the factors. Most importantly, we obtain a Gaussian behavior for the variable $S_{t_1}$ in a large deviation regime.

The estimate $\PP(H\cap C_1^c\cap A_1\cap D_1 )$ is done by first partitioning on the value of $S_{t_1}$ using the restrictions given by $A_1$ and $C_1^c$:
\begin{align*}
\PP(H\cap C_1^c\cap A_1\cap D_1)&\leq\sum_{-\mathcal A(t_1-t_0)<u< L_1} \PP(\{S_{t_1}\in (u,u+1], |\zeta|>e^V\}\cap D_1)\\
&\leq\sum_{-\mathcal A (t_1-t_0)<u< L_1} \PP(\{S_{t_1}\in (u,u+1], |\zeta e^{-S_{t_1}}|>e^{V-u-1}\}\cap D_1),
\end{align*}
where we recall that $V=\kappa t$. The event $D_1$ implies that
\[
 |\zeta e^{-S_{t_1}}|\leq 2 |\zeta \mathcal M_1|+ e^{-\mathcal D(t-t_0)}.
\]
Therefore if $|\zeta e^{-S_{t_1}}|>e^{V-u-1}$, then it must be that either 
\[
 2|\zeta \mathcal M_1|>\frac{1}{2}e^{V-u-1}
\]
or
\[
e^{-\mathcal{D}(t-t_0)}>\frac{1}{2}e^{V-u-1}.
\]
The latter case is impossible, since it the exponent on the left side is negative, whereas on the right side we have on the range of $u$
\[
V-u-1> \kappa t-\kappa t_1 +\mathcal C\log t-1>0.
\]
This implies that
\begin{align*}
\PP(A_1\cap D_1 \cap H\cap C_1^c)&\leq \sum_{-\mathcal A (t_1-t_0)<u< L_1} \PP(|\zeta \mathcal M_1|>\frac{1}{100}e^{V-u}\cap\{S_{t_1}\in (u,u+1]\} )\\
& \ll \sum_{u< L_1} e^{-4(V-u)}\cdot \E[|\zeta \mathcal M_1|^4 {\mathbf 1}(S_{t_1}\in (u,u+1])].
\end{align*}
The sum over $u<0$ is $\ll e^{-4V}\cdot e^{4(t-t_1)}$ which is much smaller than $\frac{e^{-\kappa^2 t} }{\sqrt{t}} t^{-\delta}$ for the range of $V$ considered. 
Lemma \ref{lemma_4 easy} can be applied on the range $0\leq u<L_1$
This gives
\[
  \ll e^{4(t-t_1)}\sum_{u< L_1} e^{-4(V-u)}  \frac{e^{-u^2/t_1}}{\sqrt{t_1}}.
\]
After the change of variable $w=\kappa t_1-u$, this becomes 
\begin{equation}
  \label{eq:c_1 bound}
  \begin{aligned}
    e^{(4-4\kappa)(t-t_1)}\sum_{u< L_1} e^{-4(\kappa t_1-u)}  \frac{e^{-u^2/t_1}}{\sqrt{t_1}}
    &\ll\frac{e^{-\kappa^2 t_1}}{\sqrt{t_1}}e^{(4-4\kappa)(t-t_1)} \sum_{w>\mathcal C\log t} e^{-(4-2\kappa)w}\\ 
    &\ll \frac{e^{-\kappa^2 t} }{\sqrt{t}}\cdot t^{\mathfrak s\kappa^2 + \mathfrak s (4-4\kappa)-2(2-\kappa)\mathcal C }\\
    &\ll \frac{e^{-\kappa^2 t} }{\sqrt{t}} t^{-\delta},
  \end{aligned}
\end{equation}
for some $\delta>0$, by the choice of $\mathcal C$ in \eqref{eqn: C restrict}.

Finally we estimate $D_1^c$. In order to proceed, we need the following lemma.
The proof follows by expressing $e^{-(S_{t_{\ell+1}}-S_{t_{\ell}})}$ in terms of an Euler product, and by bounding the contribution of integers $m$ with $\Omega_\ell(m)>(t_\ell-t_{\ell-1})^{10^5}$ using Rankin's trick.
\begin{lem}[Lemma 23 in \cite{argbourad20}]
\label{lemma_23}
  Suppose $\ell\geq 0$ and that $|\widetilde{S}_{t_{\ell+1}}-\widetilde{S}_{t_\ell}|\leq 10^3(t_{\ell+1}-t_{\ell})$.  Then we have
  \[e^{-(S_{t_{\ell+1}}-S_{t_{\ell}})}\leq (1+e^{-t_{\ell}})|\mathcal{M}_{\ell+1}|+e^{-10^5(t_{\ell+1}-t_{\ell})}.\]
\end{lem}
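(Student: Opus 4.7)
The plan is to follow the strategy indicated in the hint: express $e^{-(S_{t_{\ell+1}}-S_{t_\ell})}$ through a finite Euler product, then bound the contribution of squarefree integers $m$ with $\Omega(m)>K:=(t_{\ell+1}-t_\ell)^{10^5}$ using Rankin's trick.

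First I would move from the real to the complex partial sum via $S_k=\re\widetilde S_k$, so that $e^{-(S_{t_{\ell+1}}-S_{t_\ell})}=|e^{-(\widetilde S_{t_{\ell+1}}-\widetilde S_{t_\ell})}|$. The quantity $\widetilde S_{t_{\ell+1}}-\widetilde S_{t_\ell}=\sum_p(p^{-s}+\tfrac12 p^{-2s})$, summed over primes in $(\exp(e^{t_\ell}),\exp(e^{t_{\ell+1}})]$ with $s=\tfrac12+\ii\tau$, is exactly the $k=1,2$ truncation of $-\log\prod_p(1-p^{-s})^{-1}=\sum_p\sum_{k\ge 1}p^{-ks}/k$. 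The discarded tail $\sum_p\sum_{k\ge 3}p^{-ks}/k$ is absolutely bounded by $\sum_{p>\exp(e^{t_\ell})}p^{-3/2}\ll\exp(-\tfrac12 e^{t_\ell})$, well below $e^{-t_\ell}$, so exponentiation and taking absolute values gives
\[
e^{-(S_{t_{\ell+1}}-S_{t_\ell})}\le(1+e^{-t_\ell})\Big|\prod_{\exp(e^{t_\ell})<p\le\exp(e^{t_{\ell+1}})}(1-p^{-s})\Big|.
\]

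Next I would expand the finite Euler product as a Dirichlet polynomial over squarefree $m$ with prime factors in the range, splitting at $\Omega(m)=K$:
\[
\prod_p(1-p^{-s})=\mathcal M_{\ell+1}+R_{\ell+1},\qquad R_{\ell+1}=\sum_{\Omega(m)>K}\frac{\mu(m)}{m^s}.
\]
The lemma then reduces to the pointwise tail estimate $|R_{\ell+1}|\le e^{-10^5(t_{\ell+1}-t_\ell)}/(1+e^{-t_\ell})$. For this, Rankin's trick suggests the inequality $\mathbf 1_{\Omega(m)>K}\le z^{\Omega(m)-K}$ for any $z\ge 1$, which after summation yields the Euler-product bound
\[
|R_{\ell+1}|\le z^{-K}\prod_{\exp(e^{t_\ell})<p\le\exp(e^{t_{\ell+1}})}(1+z/p^{1/2}).
\]
I would then tune $z$ to balance the $z^{-K}$ decay against the multiplicative factor, invoking the hypothesis $|\widetilde S_{t_{\ell+1}}-\widetilde S_{t_\ell}|\le 10^3(t_{\ell+1}-t_\ell)$ to control the size of the Euler product on the event under consideration. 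The generous exponent $10^5$ hard-wired into $K$ is chosen to leave ample margin to reach the $e^{-10^5(t_{\ell+1}-t_\ell)}$ decay.

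The main obstacle is this final Rankin optimization: the unweighted prime sum $\sum_{p\in\mathrm{range}}p^{-1/2}$ is not itself small, so the argument genuinely needs the hypothesis on $\widetilde S$ to pin down the Euler factor and to push $z$ into a regime where $z^{-K}$ wins decisively. Steps 1 and 2 are routine bookkeeping based on the $k\ge 3$ Taylor correction and the M\"obius expansion of the finite Euler product.
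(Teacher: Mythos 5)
Your reduction steps are fine (passing from $S$ to $\widetilde S$, the $k\ge 3$ tail of $-\log(1-p^{-s})$ being $\ll e^{-\tfrac12 e^{t_\ell}}$, and splitting $\prod_p(1-p^{-s})=\mathcal M_{\ell+1}+R_{\ell+1}$), but the final step — the only place where the hypothesis can enter — does not work as proposed, and you have essentially conceded this yourself. Once you write $\mathbf 1_{\Omega(m)>K}\le z^{\Omega(m)-K}$ and pass to absolute values term by term, you obtain $|R_{\ell+1}|\le z^{-K}\prod_p(1+z p^{-1/2})$, a bound which no longer depends on $\tau$ at all; the hypothesis $|\widetilde S_{t_{\ell+1}}-\widetilde S_{t_\ell}|\le 10^3(t_{\ell+1}-t_\ell)$ therefore cannot be ``invoked'' afterwards to control this product. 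And the product is hopelessly large: $\sum_{p\le \exp(e^{t_{\ell+1}})}p^{-1/2}\asymp \exp(\tfrac12 e^{t_{\ell+1}}-t_{\ell+1})$, so $\prod_p(1+zp^{-1/2})\ge \exp\bigl(c\,z\exp(\tfrac12 e^{t_{\ell+1}})/e^{t_{\ell+1}}\bigr)$, while $z^{-K}$ only contributes $e^{-K\log z}$ with $K=(t_{\ell+1}-t_\ell)^{10^5}$ of iterated-logarithmic size; since $z/\log z\ge e$ for all $z>1$, no choice of $z$ can make $K\log z$ exceed $z\sum_p p^{-1/2}$. So ``tuning $z$'' cannot close the argument: the loss occurred when the oscillation $p^{-\ii\tau}$ was discarded, not in the choice of $z$.

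The missing idea is to bound the tail while keeping the evaluation at $s=\tfrac12+\ii\tau$, so that the hypothesis genuinely enters. For instance, writing $x_p=p^{-s}$, the blocks of $R_{\ell+1}$ are elementary symmetric functions $e_r\bigl((x_p)\bigr)$ with generating function $\prod_p(1-zx_p)=\sum_r(-1)^r e_r z^r$, and Cauchy's formula on a circle $|z|=\rho$ of fixed radius gives $|e_r|\le \rho^{-r}\exp\bigl(\rho|P_1|+\tfrac{\rho^2}{2}|P_2|+\OO(1)\bigr)$, where $P_k=\sum_p x_p^k$; here $|P_2|\ll t_{\ell+1}-t_\ell$ by Mertens and then $|P_1|\ll t_{\ell+1}-t_\ell$ by the hypothesis, so summing over $r>K$ yields $|R_{\ell+1}|\le \exp\bigl(\OO(t_{\ell+1}-t_\ell)-K\log\rho\bigr)\ll e^{-10^5(t_{\ell+1}-t_\ell)}$ since $K=(t_{\ell+1}-t_\ell)^{10^5}$ and $t_{\ell+1}-t_\ell\ge \mathfrak s$. (Equivalently one can expand $e^{-(\widetilde S_{t_{\ell+1}}-\widetilde S_{t_\ell})}$ as a Taylor series in the \emph{number} $\widetilde S_{t_{\ell+1}}-\widetilde S_{t_\ell}$ and use $r!\gg (10^3(t_{\ell+1}-t_\ell))^r$ for $r>K$, which is the mechanism in the source.) Note also that this paper does not prove the lemma: it is imported verbatim from \cite{argbourad20} (Lemma 23 there), and the one-line description ``Rankin's trick'' refers to the weight $\rho^{r-K}$ in an argument of the above type, not to a termwise absolute-value bound, which is the reading your sketch relied on.
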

Now, observe that the event $A_1\cap \{|\zeta|\leq e^{2 t}\}$ is contained in $A_1\cap D_1$. Indeed, since $|\widetilde{S}_{t_{1}}-\widetilde{S}_{t_0}|\leq 10^3(t_{1}-t_{0})$ on $A_1$, Lemma~\ref{lemma_23} implies
\[
|\zeta e^{-(S_{t_1}-S_{t_0})}|\leq 2|\zeta\mathcal{M}_1|+|\zeta|e^{-10^5(t_1-t_0)}\leq  2|\zeta\mathcal{M}_1|+e^{2t-10^5(t_1-t_0)},\]
which implies $D_1$ since $\mathcal D=10^4$.
Hence, to estimate  $\PP(D_1^c\cap A_1)$, it suffices to estimate $\PP(|\zeta|>e^{2t})$:
\begin{equation}\label{eq:d_1 bound}
  \PP(D_1^c\cap A_1)\leq \PP(|\zeta|>e^{2t})\leq e^{-4 t}\E[|\zeta|^2]\ll \frac{1}{\sqrt{t}}e^{-\kappa^2 t}e^{-100t},
\end{equation}
since $\E[|\zeta|^2]\ll e^t$ [Theorem 2.41 in \cite{harlit18}].

Summarising, we have by a union bound and successively applying Equations~\eqref{eq:a_1 bound},~\eqref{eq:b_1 bound},~\eqref{eq:c_1 bound}, and~\eqref{eq:d_1 bound},
\begin{align*}
  \PP(H\cap G_1^c)&\leq \PP(A_1^c)+\PP(B_1^c) + \PP(A_1\cap D_1\cap H\cap C_1^c) + \PP(A_1\cap D_1^c)\\
  &\ll \frac{1}{\sqrt{t}}e^{-\kappa^2 t}t^{-\delta},
\end{align*}
for some $\delta>0$ dependent on $\alpha$. 

\subsection{Proof of Proposition \ref{prop: induction}}

Notice that
\[
H\cap G_\ell \cap  G_{\ell+1}^c \subset \ (A_{\ell+1}^c  \cap  G_\ell) \ \cup\  (B_{\ell+1}^c \cap G_\ell) \ \cup \ (H \cap C_{\ell+1}^c \cap A_{\ell+1}  \cap D_{\ell+1} \cap G_\ell) \ \cup \ ( D_{\ell+1}^c\cap A_{\ell+1}\cap G_\ell ).
\]
The probability of each event in the union on the right side are now evaluated.
In order to handle the event involving $A_{\ell+1}^c$ we will need the following lemma, proved in Section \ref{sect: lemmas}.

\begin{lem}\label{lemma_3}
  Let $\ell\geq 1$ be such that $10^6(t-t_\ell)^{10^5}e^{t_{\ell+1}}\leq \frac{1}{100}e^t$.  Let $\mathcal{Q}$ be a Dirichlet polynomial of length $N\leq \exp(\frac{1}{100}e^t)$, supported on integers all of whose prime factors are greater than $\exp(e^{t_{\ell}})$.  Then for $w\in[L_{\ell},U_{\ell}]$, we have
\[
    \mathbb{E}\left[|\mathcal{Q}(\tfrac{1}{2}+\ii\tau)|^2 \mathbf{1}\left(B_{\ell}\cap C_{\ell}\cap\{S_{t_{\ell}}\in(w,w+1]\}\right)\right]
      \ll \mathbb{E}\left[|\mathcal{Q}(\tfrac{1}{2}+\ii\tau)|^2\right]\cdot \frac{e^{-w^2/t_{\ell}} }{\sqrt{t_{\ell}}} .
      \]
\end{lem}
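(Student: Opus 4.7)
The overall strategy is to decouple the Dirichlet polynomial $|\mathcal Q|^2$, which is supported on integers whose prime factors exceed $\exp(e^{t_\ell})$, from the indicator of the barrier-plus-localization event $B_\ell\cap C_\ell\cap\{S_{t_\ell}\in(w,w+1]\}$, which is measurable with respect to primes $p\leq\exp(e^{t_\ell})$. The separation of prime supports, combined with the standard mean value theorem for Dirichlet polynomials on $[T,2T]$, will give a factorized upper bound of the form $\E[|\mathcal Q|^2]\cdot\E[P]$, where $P$ is a short Dirichlet polynomial built from the small primes that majorizes the indicator.

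The first step is to construct $P(\tau)$, non-negative, supported on integers with prime factors at most $\exp(e^{t_\ell})$, of length at most $\exp\bigl(10^6(t-t_\ell)^{10^5}e^{t_{\ell+1}}\bigr)$, with
\[
\mathbf{1}\bigl(B_\ell\cap C_\ell\cap\{S_{t_\ell}\in(w,w+1]\}\bigr)\ \leq\ P(\tau)\qquad\text{pointwise}.
\]
A natural candidate is obtained via a Chernoff tilt at the Gaussian slope $\lambda=2w/t_\ell$ (so that $\mathbf{1}(S_{t_\ell}>w)\leq e^{\lambda(S_{t_\ell}-w)}$), followed by writing $e^{\lambda S_{t_\ell}}$ as an Euler product and truncating the Taylor expansion of each prime factor at degree $(t-t_\ell)^{10^5}$. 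The a priori control on all the partial sums imposed by $B_\ell\cap C_\ell$ is what ensures the truncation error is negligible. The sharp local factor $1/\sqrt{t_\ell}$, which does not come from a single Chernoff tilt, can be recovered either by Fourier-smoothing the indicator of the unit window or by taking a difference of tilts at $w$ and $w+1$.

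The second step is the decoupling. Under the standing hypothesis $10^6(t-t_\ell)^{10^5}e^{t_{\ell+1}}\leq\tfrac{1}{100}e^t$ together with $N\leq\exp(\tfrac{1}{100}e^t)$, the product $|\mathcal Q|^2 P$ is a Dirichlet polynomial of total length at most $T^{1/50}$. The mean value theorem then leaves only the diagonal contributions, and because the two factors are supported on disjoint prime sets the diagonal splits as a product, yielding
\[
\E\bigl[|\mathcal Q|^2\, P\bigr]\ \ll\ \E\bigl[|\mathcal Q|^2\bigr]\cdot \E[P].
\]
Finally, $\E[P]\ll e^{-w^2/t_\ell}/\sqrt{t_\ell}$ follows from the near-Gaussian moment estimates of Lemma~\ref{lem: Gaussian moments real}: after the tilt by $\lambda=2w/t_\ell$ one has $\E[e^{\lambda S_{t_\ell}}]\approx e^{\lambda^2 t_\ell/4}$, which combined with the prefactor $e^{-\lambda w}$ produces exactly the exponent $-w^2/t_\ell$; the local factor $1/\sqrt{t_\ell}$ comes from the window-width refinement of Step~1.

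The main technical obstacle is Step~1: the majorant $P$ must simultaneously (i) be short enough that the mean value theorem applies, which dictates the hypothesis linking $t_{\ell+1}$ to $e^t$; (ii) reproduce the sharp Gaussian exponent $w^2/t_\ell$ through the correct tilt; and (iii) retain the $1/\sqrt{t_\ell}$ local-density factor from the width-one window. The barrier event $B_\ell\cap C_\ell$ appears in the hypothesis precisely because it is needed to control the Taylor-truncation error of the Euler product uniformly in $\tau$ at the tilt $\lambda\sim 2w/t_\ell$.
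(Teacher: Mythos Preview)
Your high-level plan (majorize the indicator by a short nonnegative Dirichlet polynomial $P$ on small primes, then decouple $|\mathcal Q|^2$ via the mean-value theorem and disjoint prime supports) is the same as the paper's. The difficulty is entirely in the construction of $P$, and here your proposal has a genuine gap. A Chernoff tilt $e^{\lambda(S_{t_\ell}-w)}$ at $\lambda=2w/t_\ell$, truncated per prime, does yield a legitimate short majorant, but it only gives $\E[P]\ll e^{-w^2/t_\ell}$ without the factor $1/\sqrt{t_\ell}$, and that factor is not cosmetic: without it the applications in Propositions~\ref{prop: induction}--\ref{prop: last} lose the $1/\sqrt{\log\log T}$ in Theorem~\ref{thm: LD Selberg}. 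Your ``difference of tilts'' does not repair this, since a difference of upper bounds need not majorize the difference (for $S_{t_\ell}=w+\varepsilon$ and small $\alpha$ one has $e^{\lambda\varepsilon}(1-e^{-\lambda})\approx\lambda<1$). ``Fourier-smoothing the unit window'' is the right instinct, but applied directly to the full sum $S_{t_\ell}$ it fails the length constraint: to go from $\E[|\mathcal D(\mathcal Y-w)|^2]$ in the random model back to a probability one must dispose of the tail $\{|\mathcal Y-w|>R\}$ via Cauchy--Schwarz, and since $\mathcal Y$ has variance $\asymp t_\ell$ and $|w|\asymp t_\ell$, this forces $R\gg t_\ell$ and hence truncation degree $K\gg t_\ell$; but the length bound requires $K\ll e^{t-t_\ell}=(\log_\ell t)^{\mathfrak s}$, and already for $\ell=2$ one has $(\log_2 t)^{\mathfrak s}=o(t_\ell)$.

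The paper avoids this by decomposing $S_{t_\ell}=\sum_{j\leq\ell}Y_j$ into increments $Y_j=S_{t_j}-S_{t_{j-1}}$ and applying a Beurling--Selberg-type majorant (Lemma~\ref{lem: D}, built from Lemma~6 of~\cite{argbourad20}) to each $Y_j$ in a window of width $\Delta_j^{-1}=(t_j-t_{j-1})^{-1}$; the truncation degree for $Y_j$ is then a power of $\Delta_j$ rather than of $t_\ell$, and the product stays short. This is also where $B_\ell\cap C_\ell$ actually enters: not to control a per-prime Euler-product truncation (for that $|p^{-\ii\tau}|=1$ suffices pointwise and no barrier is needed), but to localize each increment center $u_j$ to $|u_j|\leq 4\Delta_j+2$ (Equation~\eqref{eqn: u}). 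That bound on $|u_j|$ is precisely what makes the Cauchy--Schwarz step of Lemma~\ref{lem: D to Y} close with acceptable error.
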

As in Lemma \ref{lemma_4}, the decoupling is due to the fact that the Dirichlet polynomials involve primes in different intervals. 
Though the events $B_{\ell}\cap C_{\ell}$ do not appear explicitly in the result, their presence here is crucial to obtain the Gaussian behavior of $S_{t_{\ell}}$ in a large deviation regime.

We first show that $\ell\geq 1$
\[
\PP(A_{\ell+1}^c \cap G_\ell)\ll \frac{e^{-V^2/t}}{\sqrt{t}}\cdot (\log_{\ell} t)^{-\delta}.
\]
For any $q>1$, the probability $\PP(A_{\ell+1}^c \cap G_\ell)$ is smaller than
\[
\begin{aligned}
 \sum_{u\in [L_\ell, U_\ell]}\E\left[\frac{|\widetilde S_{t_{\ell+1}}-\widetilde S_{t_\ell}|^{2q}}{(\mathcal{A}(t_{\ell+1}-t_\ell))^{2q}}\mathbf{1}(B_\ell\cap C_\ell\cap\{S_{t_\ell}\in (u,u+1]\})\right].
\end{aligned}
\]
With the choice $q=\lceil 2\mathcal{A}^2(t_{\ell+1}-t_\ell)\rceil$, the polynomial $\mathcal Q=|\widetilde S_{t_{\ell+1}}-\widetilde S_{t_\ell}|^{2q}$ both satisfies the assumptions of Lemma \ref{lemma_3} and Lemma \ref{lem: Gaussian moments cplx}.
Therefore, the above is
\[
\begin{aligned}
&\ll  \sum_{u\in [L_\ell, U_\ell]} (t_{\ell+1}-t_\ell)^{1/2} e^{-{\mathcal{A}}^2(t_{\ell+1}-t_\ell) } \frac{e^{-u^2/t_\ell}}{\sqrt{t_\ell}}\\
&\ll (t_{\ell+1}-t_\ell)^{1/2} \cdot e^{-{\mathcal{A}}^2(t_{\ell+1}-t_\ell) } \cdot \frac{e^{-L_\ell^2/t_\ell}}{\sqrt{t_\ell}},
\end{aligned}
\]
where the last inequality is by estimating the sum over $u$ trivially. Since $L_\ell=\kappa t_\ell -\mathcal{C}\log_\ell t$, this is
\begin{equation}
  \label{eqn: A const}
  \begin{aligned}
    \ll \frac{e^{-\kappa^2t_\ell}}{\sqrt{t}}\cdot (\log_{\ell-1}t)^{-{\mathcal{A}}^2 \mathfrak{s}  + 2\kappa \mathcal{C}}
    \ll \frac{e^{-\kappa^2t }}{\sqrt{t}}\cdot (\log_{\ell-1}t)^{\kappa^2\mathfrak{s}-{\mathcal{A}}^2 \mathfrak{s}  + 2\kappa \mathcal{C}}.
  \end{aligned}
\end{equation}
The choice of parameters in Equation \eqref{eqn: A restrict 2} guarantees that the exponent is negative. 

Now we show that for $\ell\geq 1$, 
\[
\PP(B_{\ell+1}^c\cap G_\ell)\ll \frac{1}{\sqrt{t}}e^{-\kappa^2 t}\cdot (\log_\ell t)^{-\delta}.
\]
By partitioning on the position of $S_{t_\ell}$, we have
\begin{align*}
  \PP(B_{\ell+1}^c\cap G_\ell)&\ll \PP(B_{\ell+1}^c\cap B_\ell\cap C_\ell)\\
  &\ll \sum_{u\in [L_\ell,U_\ell]} \PP(\{S_{t_{\ell+1}}-S_{t_\ell}>U_{\ell+1}-u\}\cap\{S_{t_\ell}\in(u,u+1]\}\cap B_{\ell-1}\cap C_{\ell-1})\\
  &\ll \sum_{u\in [L_\ell,U_\ell]} \E\left[\frac{(S_{t_{\ell+1}}-S_{t_\ell})^{2q}}{(U_{\ell+1}-u)^{2q}}\mathbf{1}(S_{t_\ell}\in(u,u+1], S_{t_k}\in[L_k, U_k] \ \forall k<\ell)\right],
\end{align*}
where the final line holds for any $q>1$ by an application of Markov's inequality, provided that $U_{\ell+1}-U_\ell>0$. This holds by the choice of $\mathcal{B}$ and $\mathfrak{s}$
in Equations \eqref{eqn: s} and \eqref{eqn: B restrict}.

Choosing $q= \lceil(U_\ell-u)^2/(t_{\ell+1}-t_\ell)\rceil$, then the Dirichlet polynomial $(S_{t_{\ell+1}}-S_{t_\ell})^q$ has length at most $\exp(2q e^{t_{\ell+1}})$ so the the conditions of Lemma~\ref{lemma_3} and Lemma~\ref{lem: Gaussian moments real} are satisfied. An application of Equation \eqref{eqn: gaussian tail real} then yields
\begin{align*}
  \PP(B_{\ell+1}^c\cap G_\ell)&\ll \sum_{u\in[L_\ell,U_\ell]}e^{-\frac{(U_{\ell+1}-u)^2}{t_{\ell+1}-t_\ell}}\cdot \frac{e^{-u^2/t_{\ell}}}{\sqrt{t_{\ell}}}
  \ll \sqrt{t_{\ell+1}-t_\ell}\cdot \frac{e^{-U_{\ell+1}^2/t_\ell}}{\sqrt{t}}.
\end{align*}
The last bound follows by bounding the sum over $u$ by the Gaussian integral. 
Since $U_{\ell+1}=\kappa t_{\ell+1}+\mathcal{B}\log_{\ell+1} t$, this is bounded by
\begin{equation}
  \label{eqn: B const2}
  \ll  \frac{e^{-\kappa^2 t}}{\sqrt{t}}\cdot (\log_\ell t)^{1/2+\mathfrak{s}\kappa^2-2\kappa\mathcal{B}}.
\end{equation}
The choice of $\mathcal{B}$ in Equation \eqref{eqn: B restrict} ensures that $1/2+\mathfrak{s}\kappa^2-2\kappa\mathcal{B}<0$.

The next estimate is 
\[
\PP( H\cap C_{\ell+1}^c\cap A_{\ell+1}\cap D_{\ell+1} \cap G_\ell ).
\]
For this, we need a more detailed version of Lemma \ref{lemma_4 easy}.
\begin{lem}
\label{lemma_4}
  Let  $\ell\geq 1$
  such that $10^6(t-t_\ell)^{10^5}e^{t_{\ell+1}}\leq \frac{1}{100}e^t$. 
  For $w\in[L_{\ell},U_{\ell}]$, we have
   \begin{equation}
  \label{eqn: lemma 4 easy}
   \E\left[|\zeta \mathcal{M}_{1}\cdots\mathcal{M}_{\ell}|^4\ \mathbf{1}\left(B_\ell\cap C_\ell, S_{t_\ell}\in[w,w+1]\right)\right]
      \ll e^{4(t-t_{\ell})}\cdot \frac{e^{-w^2/t_\ell}}{\sqrt{t}}.
\end{equation}
  Moreover, let $\gamma(m)$ be a sequence of complex coefficients with $|\gamma(m)|\leq \exp(\frac{1}{1000}e^t)$ for all $m\geq 1$.  
  Set \[\mathcal{Q}_\ell=\sum_{\substack{p|m\implies \log\log p\in(t_\ell,t_{\ell+1}]\\\Omega_{\ell+1}(m)\leq (t_{\ell+1}-t_\ell)^{10^4}}}\frac{\gamma(m)}{m^{\frac{1}{2}+\ii\tau}}.\]
  We have 
  \begin{equation}
  \label{eqn: lemma 4}
   \E\left[|\zeta \mathcal{M}_{1}\cdots\mathcal{M}_{\ell+1}|^4\right.\left.|\mathcal Q_\ell|^2\mathbf{1}\left(B_\ell\cap C_\ell, S_{t_\ell}\in[w,w+1]\right)\right]
      \ll e^{4(t-t_{\ell+1})}\cdot \mathbb{E}\left[|\mathcal{Q}_\ell|^2\right]\cdot \frac{e^{-w^2/t_\ell}}{\sqrt{t}}.
\end{equation}
\end{lem}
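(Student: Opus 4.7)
The plan is to adapt the decoupling strategy behind Lemma \ref{lemma_4 easy} and Lemma \ref{lemma_3}. The guiding heuristic is that the product $\mathcal M_1\cdots \mathcal M_\ell$ effectively inverts the contribution of primes up to $\exp(e^{t_\ell})$ from the Euler product of $\zeta$, so that $\zeta \mathcal M_1\cdots \mathcal M_\ell$ behaves like a partial Euler product on primes larger than $\exp(e^{t_\ell})$. Its fourth moment should therefore match the exponential of the associated Gaussian variance $4(t-t_\ell)$, producing the factor $e^{4(t-t_\ell)}$ in Part 1 (resp.\ $e^{4(t-t_{\ell+1})}$ in Part 2). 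Meanwhile, the indicator $\mathbf 1(B_\ell\cap C_\ell\cap\{S_{t_\ell}\in(w,w+1]\})$ and the polynomial $\mathcal Q_\ell$ are supported on disjoint prime ranges---primes $\leq \exp(e^{t_\ell})$ and primes in $(\exp(e^{t_\ell}),\exp(e^{t_{\ell+1}})]$, respectively---so the three contributions should decouple.

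Concretely, I would first expand $|\mathcal M_1\cdots \mathcal M_\ell|^4$ (resp.\ $|\mathcal M_1\cdots \mathcal M_{\ell+1}|^4|\mathcal Q_\ell|^2$) as a single Dirichlet polynomial. Thanks to the truncations $\Omega_j(m)\leq (t_j-t_{j-1})^{10^5}$ built into each $\mathcal M_j$, the length condition on $\mathcal Q_\ell$, the size assumption $|\gamma(m)|\leq \exp(\tfrac{1}{1000}e^t)$, and the hypothesis $10^6(t-t_\ell)^{10^5}e^{t_{\ell+1}}\leq \tfrac{1}{100}e^t$, the resulting polynomial has length at most $T^{1/2-\eta}$ for some $\eta>0$. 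A twisted fourth moment estimate for $\zeta$ (Lemma \ref{lemma_9}) then applies: after expanding and identifying the diagonal contribution using that each $\mathcal M_j$ approximates the inverse Euler product on its prime range, the main term contributes $e^{4(t-t_\ell)}\cdot \E[|\mathcal Q_\ell|^2]$ (with $\mathcal Q_\ell\equiv 1$ in Part 1), while leaving a residual Dirichlet polynomial $\mathcal R$ supported on primes $\leq \exp(e^{t_\ell})$ as a twist. For the residual expectation involving $\mathcal R$ and the indicator, I would approximate $\mathbf 1(S_{t_\ell}\in(w,w+1])$ by a short Fourier series, turning it into a polynomial on primes $\leq \exp(e^{t_\ell})$. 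The resulting second moment, restricted to $B_\ell\cap C_\ell$, satisfies the hypotheses of Lemma \ref{lemma_3} and yields the Gaussian factor $e^{-w^2/t_\ell}/\sqrt{t_\ell}$. Multiplying the two contributions produces the claimed bound, with the $\sqrt{t_\ell}$ replaced by $\sqrt{t}$ since $t_\ell\asymp t$.

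The main obstacle is the twisted fourth moment step. One must verify that expanding the iterated mollifier product and applying the twisted fourth moment of $\zeta$ yields a clean main term with the factorisation $e^{4(t-t_\ell)}\cdot \E[|\mathcal Q_\ell|^2]$, rather than something laden with combinatorial debris. This requires carefully identifying the diagonal contribution, controlling the tails from the $\Omega_j$-restrictions via Rankin's trick (as in Lemma \ref{lemma_23}), and using the fact that each $\mathcal M_j$ is a good approximation to the inverse Euler product on its prime range. The hypothesis $10^6(t-t_\ell)^{10^5}e^{t_{\ell+1}}\leq \tfrac{1}{100}e^t$ and the truncation $\Omega_{\ell+1}(m)\leq (t_{\ell+1}-t_\ell)^{10^4}$ in $\mathcal Q_\ell$ are engineered precisely so that the off-diagonal and truncation error terms are negligible compared with the main Gaussian term.
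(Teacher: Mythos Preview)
Your intuition and ingredients are right, but the order of operations is inverted in a way that matters. Lemma~\ref{lemma_9} is a statement about $\E[|\zeta\mathcal M_1\cdots\mathcal M_{\ell+1}|^4|\mathcal Q|^2]$ where $\mathcal Q$ is a \emph{well-factorable Dirichlet polynomial}; it cannot be applied while the indicator $\mathbf 1(B_\ell\cap C_\ell\cap\{S_{t_\ell}\in(w,w+1]\})$ still sits inside the expectation. The paper therefore does things in the opposite order from your sketch: \emph{first} it replaces the indicator by a sum of products $\prod_j|\mathcal D_{\Delta_j,A}(Y_j-u_j)|^2$ (exactly the machinery built in Lemmas~\ref{lem: D}--\ref{lem: D to Y}), obtaining a genuine Dirichlet polynomial on primes $\leq \exp(e^{t_\ell})$; \emph{then} it applies Lemma~\ref{lemma_9} with the twist $\mathcal Q=\mathcal Q_\ell\cdot\prod_j\mathcal D_{\Delta_j,A}$, pulling out $e^{4(t-t_{\ell+1})}$ in one stroke; finally it splits $\E[|\mathcal Q_\ell|^2\prod_j|\mathcal D_{\Delta_j,A}|^2]$ via Lemma~\ref{lem: splitting} and estimates the small-prime factor by the argument of Lemma~\ref{lemma_3}.

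Two specific points where your sketch goes astray: (i) there is no ``residual Dirichlet polynomial $\mathcal R$ on small primes'' left over after Lemma~\ref{lemma_9}---that lemma is a clean inequality, not an asymptotic with a remainder term. The mollifier cancellation you describe is already absorbed into the statement of Lemma~\ref{lemma_9}. (ii) You cannot invoke Lemma~\ref{lemma_3} for the final step: its hypothesis is that the polynomial is supported on primes \emph{larger} than $\exp(e^{t_\ell})$, whereas the indicator lives on the small primes. What is reused is not Lemma~\ref{lemma_3} itself but the estimates from its \emph{proof} (the passage from $\prod_j|\mathcal D_{\Delta_j,A}|^2$ to the random model and then to the Gaussian bound $e^{-w^2/t_\ell}/\sqrt{t_\ell}$).
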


We now partition the values of $S_{t_\ell}=u$ for $L_\ell\leq u\leq U_\ell$ (on the event $G_\ell)$ as well as the values of the increments $S_{t_{\ell+1}}-S_{t_\ell}=v$ with the restrictions $u+v<L_{\ell+1}$ (on the event $C_{\ell+1}^c$) and $|v|\leq \mathcal A(t_{\ell+1}-t_\ell)$ (on the event $A_{\ell+1}$). The above is then smaller than
\[
\sum_{\substack{u\in [L_\ell, U_\ell]\\ u+v\leq L_{\ell+1}\\ |v|\leq \mathcal A(t_{\ell+1}-t_\ell)}}
\PP(\{S_{t_\ell}\in (u,u+1],S_{t_{\ell+1}}-S_{t_\ell}\in (v,v+1], |\zeta e^{-S_{t_{\ell+1}}}|>e^{V-(u+v)}\}\cap B_\ell \cap C_\ell \cap D_{\ell+1}).
\]
The definition of the event $D_{\ell+1}$ and the fact that  $|\zeta e^{-S_{t_{\ell+1}}}|>e^{V-(u+v+1)}$ imply that either
\[
c_\ell|\zeta\mathcal{M}_1\cdots\mathcal{M}_{\ell+1}|>\frac{1}{2} e^{V-(u+v+1)},
\]
or
\[
e^{-\mathcal D(t-t_\ell)}>\frac{1}{2} e^{V-(u+v+1)}.
\]
Again, the last case cannot occur, since the exponent on the left side is negative whereas the one on the right is
\[
V-(u+v)-1> V-L_{\ell+1}-1=(\mathfrak{s} \kappa + \mathcal{C})\log_{\ell+1}t-1>0,
\]
for $1\leq \ell \leq \mathcal{L}-1$ since $\log_{\mathcal L}t>0$ by construction.
This reduces this estimate to 
\[
\begin{aligned}
&\sum_{\substack{u\in [L_\ell, U_\ell]\\ u+v\leq L_{\ell+1}\\ |v|\leq \mathcal A(t_{\ell+1}-t_\ell)}}
\PP(\{S_{t_\ell}\in (u,u+1], S_{t_{\ell+1}}-S_{t_\ell}\in (v,v+1], |\zeta \mathcal M_1\cdots \mathcal M_{\ell+1}|>\frac{1}{100}e^{V-(u+v)}\}\cap B_\ell \cap C_\ell)\\
&\ll \sum_{\substack{u\in [L_\ell, U_\ell]\\ u+v\leq L_{\ell+1}\\ |v|\leq \mathcal A(t_{\ell+1}-t_\ell)}}
e^{-4 V+4(u+v)}\E\Big[ |\zeta \mathcal M_1\cdots \mathcal M_{\ell+1}|^4 \cdot \frac{|S_{t_{\ell+1}}-S_{t_\ell}|^{2q}}{|v|^{2q}} \1(B_\ell \cap C_\ell\cap \{S_{t_\ell}\in (u,u+1]\})\Big],
\end{aligned}
\]
by Markov's inequality with $q=\lceil |v|^2/(t_{\ell+1}-t_\ell)\rceil\leq \mathcal A^2(t_{\ell+1}-t_\ell)$. 
Applications of Lemma \ref{lemma_4}, Equation~\ref{eqn: lemma 4} with $\mathcal Q_\ell=(S_{t_{\ell+1}}-S_{t_\ell})^q$ and Equation \eqref{eqn: gaussian tail real} then implies that this is
\[
\ll \sum_{\substack{u\in [L_\ell, U_\ell]\\ u+v\leq L_{\ell+1}\\ |v|\leq \mathcal A(t_{\ell+1}-t_\ell)}}
e^{-4 V+4(u+v)}\cdot e^{4(t-t_{\ell+1})}\cdot e^{-v^2/(t_{\ell+1}-t_\ell)}\frac{e^{-u^2/t_\ell}}{\sqrt{t_\ell}}.
\]
The change of variables $\bar u=u-\kappa t_{\ell}$ and $\bar v=v-\kappa( t_{\ell+1}-t_{\ell})$ and dropping some conditions on the sum gives
\[
\ll \frac{e^{-\kappa^2 t_{\ell+1}}}{\sqrt{t}}e^{(4-4\kappa)(t-t_{\ell+1})}\cdot \sum_{\substack{\bar v\in\mathbb Z\\ \bar u+\bar v\leq -\mathcal C\log_{\ell+1}t}}
e^{(4-2\kappa)(\bar u+\bar v)} e^{-\bar v^2/(t_{\ell+1}-t_\ell)},
\]
where we dropped the term $e^{-\bar u^2/t_\ell}$ since it is of order one by the restriction on $\bar u$. 
It remains to sum over $\bar u+\bar v$ first, then do the Gaussian sum on $\bar v$ to get
\begin{equation}
  \label{eqn: C const2}
  \frac{e^{-\kappa^2 t}}{\sqrt{t}}\cdot (\log_\ell t)^{\mathfrak{s}(2-\kappa)^2-2(2-\kappa)\mathcal C +1/2}.
\end{equation}
Again, the last term is $(\log_\ell t)^{-\delta}$ by the choice of parameters in \eqref{eqn: C restrict}. 

Finally we consider $D_{\ell+1}^c\cap A_{\ell+1}\cap G_\ell$.
We claim that it is enough to evaluate
\[
  \PP(\{|\zeta   \mathcal{M}_1\cdots \mathcal{M}_\ell|> e^{\mathcal{\mathcal A}(t-t_\ell)}\}\cap G_\ell).
\]
To see this, it suffices to notice that $A_{\ell+1} \cap \{|\zeta   \mathcal{M}_1\cdots \mathcal{M}_\ell|\leq e^{\mathcal{A}(t-t_\ell)}\}\cap D_\ell$ is in $A_{\ell+1}\cap D_{\ell+1}$.
Indeed, on the event $A_{\ell+1} \cap \{|\zeta   \mathcal{M}_1\cdots \mathcal{M}_\ell|\leq e^{\mathcal{A}(t-t_\ell)}\}\cap D_\ell$, we have
\begin{align}
  |S_{t_{\ell+1}}-S_{t_\ell}|&\leq \mathcal{A}(t_{\ell+1}-t_\ell)\label{eqn: al+1 for dl+1}\\
  |\zeta e^{-S_{t_{\ell}}}|&\leq c_{\ell}|\zeta   \mathcal{M}_1\cdots \mathcal{M}_\ell| +e^{-\mathcal{D}(t-t_{\ell-1})}\label{eqn: dl for dl+1}\\
    |\zeta   \mathcal{M}_1\cdots \mathcal{M}_\ell|&\leq e^{\mathcal{A}(t-t_\ell)} \label{eqn: dl zeta bound}.
\end{align}
Equations \eqref{eqn: al+1 for dl+1} and \eqref{eqn: dl for dl+1} imply that
\begin{align*}
  |\zeta e^{-S_{t_{\ell+1}}}|
  &\leq \left(c_\ell|\zeta   \mathcal{M}_1\cdots \mathcal{M}_{\ell}|+e^{-10^4(t-t_{\ell-1})}\right)e^{-(S_{t_{\ell+1}}-S_{t_\ell})}\\
  &\leq c_\ell|\zeta   \mathcal{M}_1\cdots \mathcal{M}_{\ell}|e^{-(S_{t_{\ell+1}}-S_{t_\ell})}+e^{-10^3(t-t_{\ell-1})},
\end{align*}
for $t$ large enough.
Then, Lemma \ref{lemma_23} gives
\[
|\zeta e^{-S_{t_{\ell+1}}}|\leq c_\ell|\zeta   \mathcal{M}_1\cdots \mathcal{M}_{\ell+1}|+c_\ell e^{10^3(t-t_\ell)-10^5(t_{\ell+1}-t_\ell)} +e^{-10^3(t-t_{\ell-1})}.
\]
We conclude that $D_{\ell+1}$ holds. 

It remains to estimate  $\PP(\{|\zeta \mathcal{M}_\ell|> e^{\mathcal{A}(t-t_\ell)}\}\cap G_\ell)$. We have by subsequent applications of Markov's inequality and Lemma~\ref{lemma_4}, Equation \eqref{eqn: lemma 4 easy},
\begin{align*}
  \PP(\{|\zeta \mathcal{M}_\ell|> e^{\mathcal{A}(t-t_\ell)}\}\cap G_\ell)
  &\ll e^{-4\mathcal{A}(t-t_\ell)}\E[|\zeta\mathcal{M}_{\ell}|^4\mathbf{1}(G_\ell)]\\
    &\ll e^{-4(\mathcal{A}-1)(t-t_\ell)}\frac{e^{-L_\ell^2/t_\ell}}{\sqrt{t_\ell}}\\
  &\ll\frac{1}{\sqrt{t}}e^{-\kappa^2t}e^{-4\mathfrak{s}(\mathcal{A}-1)\log_\ell t}e^{\mathfrak{s}\kappa^2\log_\ell t+2\kappa\mathcal{C}\log_\ell t}.
\end{align*}
We conclude that
\begin{equation}
  \label{eqn: A const2}
  \PP(D_{\ell+1}^c\cap A_{\ell+1}\cap G_\ell)\ll \frac{1}{\sqrt{t}}e^{-\kappa^2t}\cdot (\log_{\ell-1}t)^{\mathfrak{s}\kappa^2+2\kappa\mathcal{C}-4\mathfrak{s}(\mathcal{A}-1)}.
\end{equation}
The exponent is negative by Equation \eqref{eqn: A restrict}.

\subsection{Proof of Proposition \ref{prop: last}}

Finally we establish that
\[
\PP(H\cap G_\mathcal L)\ll \frac{1}{\sqrt{t}}e^{-\kappa^2 t}.
\]

After partitioning on the value of $S_{t_\mathcal L}$, applying Markov's inequality, and subsequently Lemma~\ref{lemma_4} we have
\[
\PP(H\cap G_\mathcal L)\ll \sum_{v\in[L_\mathcal L,U_\mathcal L]}e^{4(t-t_\mathcal L)}e^{4v}\frac{e^{-v^2/t_\mathcal L}}{\sqrt{t_\mathcal L}}.
\]
Applying the transformation $w=v-\kappa t_\mathcal L$, the probability is bounded by
\[
\ll \frac{1}{\sqrt{t}}e^{-\kappa^2 t}e^{(2-\kappa)^2\log_\mathcal L t}\sum_{-\mathcal{C}\log_\mathcal L t<w<\mathcal{B}\log_\mathcal L t}e^{2(2-\kappa)w}.
\]
Since $\alpha<2$, the sum is bounded by $\exp(2(2-\kappa)\mathcal{B}\log_\mathcal L t)$, so after grouping we find
\[
\ll \frac{1}{\sqrt{t}}e^{-\kappa^2 t}\cdot e^{(2-\kappa+2\mathcal{B})(2-\kappa)\log_\mathcal L t}.
\]
By the choice of $\mathcal L$, this is $\ll \frac{1}{\sqrt{t}}e^{-\kappa^2 t}$. 

\subsection{Proof of Lemma \ref{lemma_3}}
\label{sect: lemmas}

We express the event $B_\ell\cap C_\ell\cap\{S_{t_\ell}\in (w,w+1]\}$ in terms of the increments
\begin{equation}
  \label{eqn: Y}
  Y_j=S_{t_j}-S_{t_{j-1}}, \quad 1\leq j\leq \ell.
\end{equation}
The event implies that $S_{t_j}\in [L_j,U_j]$ for all $j$. We partition these intervals into subintervals of width $\Delta_j^{-1}$ where 
\[
\Delta_j=(t_j-t_{j-1}),
\]
so $\Delta_j\leq \mathfrak{s}\log_{j-1}t$ for $j>1$, and $\Delta_1$ is effectively $t_1=t-\mathfrak{s}\log t$. Note that $\Delta_j$ is of the same order as the variance of $Y_j$. Moreover, we have
\[
\sum_{j\geq 1}\Delta_j^{-1}\leq 1.
\]

Consider the set $\mathcal I$ of $\ell$-tuples $\mathbf u=(u_1,\dots, u_\ell)$ such that
\begin{equation}
  \label{eqn: I}
  \begin{aligned}
    \sum_{i=1}^{j} u_i\in[L_j-1,U_j+1],\quad j\leq \ell , \qquad  \sum_{i=1}^{\ell} u_i\in[w-1,w+1].
  \end{aligned}
\end{equation}
As a consequence of the definition, we have for all $j>1$
\begin{align*}
  L_j-1-(U_{j-1}+1)&\leq u_j\leq U_j+1-(L_{j-1}-1)
\end{align*}
which implies $|u_j| \leq ( \kappa\mathfrak s + \mathcal B+\mathcal C)\log_{j-1}t +2$. 
We will also shortly require the following estimate.  Since $\mathcal B<\alpha \mathfrak s$ and $\mathcal C<(2-\alpha) \mathfrak s$ (by \eqref{eq:b_coeff} and \eqref{eq:c_coeff}), we conclude from $\alpha<2$ that
\begin{equation}
  \label{eqn: u}
  |u_j|< 4\Delta_j+2.
\end{equation}
With these definitions, it is straightforward to check that we have the following inclusion of events
\begin{equation}
  \label{eqn: inclusion}
  B_\ell\cap C_\ell\cap \{S_{t_\ell}\in (w,w+1]\}\subset \bigcup_{\mathbf u\in \mathcal I}\{Y_j\in [u_j,u_j+\Delta_j^{-1}], 1\leq j\leq \ell\}.
\end{equation}
In particular, this implies
\begin{equation}
  \label{eqn: increment}
  \1(B_\ell\cap C_\ell\cap \{S_{t_\ell}\in (w,w+1]\})\leq \sum_{\mathbf u\in \mathcal I}\prod_{j}\1(Y_j\in [u_j,u_j+\Delta_j^{-1}]).
\end{equation}
We first prove:
\begin{lem}
  \label{lem: D}
  In the above notation, we have for $A\geq 10$ and $j\leq \ell$,
  \begin{equation}
    \label{eqn: approx 1}
    \1(Y_j\in [u_j,u_j+\Delta_j^{-1}])\leq |\mathcal D_{\Delta_j, A}(Y_j-u_j)|^2(1+c e^{-\Delta_j^{A-1}}),
  \end{equation}
  where $c$ is an absolute constant and $\mathcal D_{\Delta_j, A}(Y_j-u_j)$ is a Dirichlet polynomial on integers $n$ whose prime factors are in $(\exp(e^{t_{j-1}}),\exp(e^{t_j})]$ with $\Omega(n)\leq \Delta_j^{10A}$. In particular, its length is less than than $\exp(2e^{t_j}\Delta_j^{10A})$.
\end{lem}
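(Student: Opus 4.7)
The plan is to construct $\mathcal{D}_{\Delta_j,A}$ explicitly by Fourier-approximating the short-interval indicator and then expanding over primes. First, I would choose a nonnegative Fejér-type kernel $K_A$ on $\R$ with $\1_{[0,\Delta_j^{-1}]}(x)\leq K_A(x)$ and satisfying the majorisation $K_A(x)\leq |P(x)|^2$, where $P$ is a trigonometric polynomial of the form
\[
P(x)=\sum_{|k|\leq \Delta_j^{A-1/2}} c_k\, e^{2\pi \ii k \Delta_j x},
\]
with $|c_k|$ small (e.g.\ scaled so $\|P\|_\infty$ is of order one). The particular shape of $P$ does not matter; only that the frequencies are of size $\Delta_j^{A-1/2}$ and that the majorisation holds with $K_A(x)\leq 1+ce^{-\Delta_j^{A-1}}$ on $[0,\Delta_j^{-1}]$. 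Substituting $x=Y_j-u_j$ reduces the task to rewriting $\exp(2\pi \ii k\Delta_j(Y_j-u_j))$ as a Dirichlet polynomial.

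Since $Y_j$ is a sum of the terms $\re(p^{-\ii\tau})/\sqrt{p}$ and $\re(p^{-2\ii\tau})/(2p)$ over primes $p\in(\exp(e^{t_{j-1}}),\exp(e^{t_j})]$, the exponential factors as an Euler product over exactly these primes. I would expand each local factor in its Taylor series, multiply out, and collect the terms according to the integer $n=\prod p^{\alpha_p}$ that labels the contribution. This produces a formal Dirichlet series $\sum_{n\geq 1}a_n/n^{1/2+\ii\tau}$ supported on integers $n$ whose prime factors all lie in $(\exp(e^{t_{j-1}}),\exp(e^{t_j})]$. I would then define $\mathcal{D}_{\Delta_j,A}(Y_j-u_j)$ to be the truncation of this series to $\Omega(n)\leq \Delta_j^{10A}$. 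The length bound $\exp(2 e^{t_j}\Delta_j^{10A})$ is then immediate: the largest integer in the support is at most $(\exp(e^{t_j}))^{\Delta_j^{10A}}$, with the factor of $2$ covering the combined contributions from the $p^{-\ii\tau}$ and $p^{-2\ii\tau}$ pieces.

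The one nontrivial step is controlling the truncation error $(1+ce^{-\Delta_j^{A-1}})$. For this I would use Rankin's trick in the standard way: for any $\beta>1$,
\[
\sum_{\Omega(n)>\Delta_j^{10A}}|a_n| \leq \beta^{-\Delta_j^{10A}}\sum_{n}\beta^{\Omega(n)}|a_n|,
\]
and the right-hand sum equals the Euler product with each local factor evaluated at $\beta$ times the original frequency. Using that $\sum_{\exp(e^{t_{j-1}})<p\leq \exp(e^{t_j})}1/p\ll \Delta_j$ and the choice of frequency cutoff $|k|\leq\Delta_j^{A-1/2}$, each local factor is $1+O(\Delta_j^{2A-1}/p)$, so the full Euler product is at most $\exp(O(\Delta_j^{2A}))$. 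Optimizing $\beta$ (effectively $\beta=e$) turns the prefactor $\beta^{-\Delta_j^{10A}}$ into $e^{-\Delta_j^{10A}}$, which dominates and produces the required $e^{-\Delta_j^{A-1}}$ error with plenty of room. The main obstacle is precisely this balancing: the chosen frequency cutoff in $P$ must be small enough that Rankin's trick still yields exponential decay in $\Delta_j^{A-1}$, yet large enough that $|P|^2$ majorises $K_A$ to within the claimed multiplicative slack. Once the parameters are fixed as above the arithmetic works out without further difficulty.
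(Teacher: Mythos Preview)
Your approach diverges from the paper's at the truncation step, and the Rankin estimate you sketch does not close.

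The paper never expands $e^{2\pi i k\Delta_j Y_j}$ as an Euler product. It takes a band-limited entire majorant $G_{\Delta_j,A}$ (quoted from Lemma~6 of \cite{argbourad20}) with $\widehat G$ supported on $[-\Delta_j^{2A},\Delta_j^{2A}]$, writes $G(x)=\int e^{2\pi i\xi x}\widehat G(\xi)\,\rd\xi$, and Taylor-expands the exponential \emph{in the real variable $x$} to order $\nu=\Delta_j^{10A}$. This produces an honest polynomial $\mathcal D_{\Delta_j,A}(x)$ of degree $\nu$; substituting $x=Y_j-u_j$ then gives a Dirichlet polynomial with $\Omega\le 2\nu$ automatically. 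The remainder is $\ll (2\pi|\xi x|)^{\nu}/\nu!$, and since the inequality is vacuous unless $|x|\le\Delta_j^{-1}$ while $|\xi|\le\Delta_j^{2A}$, one has $|\xi x|\le\Delta_j^{2A-1}$ and Stirling kills the tail.

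Your route---factor $e^{2\pi ik\Delta_j Y_j}=\prod_p e^{2\pi ik\Delta_j X_p}$ and truncate in $\Omega(n)$ via Rankin---fails at the local factor. By Jacobi--Anger, the Fourier coefficient at $p^{\pm i\tau}$ has size $\asymp k\Delta_j/\sqrt p$, not $O(1/p)$; the $1/p$ decay you quote only appears at the second harmonic. Consequently
\[
\prod_p\sum_{m\in\Z}\beta^{|m|}|c_m(p)|\ \ge\ \prod_p\Big(1+\tfrac{\beta k\Delta_j}{\sqrt p}\Big)\ \ge\ \exp\Big(c\,\beta k\Delta_j\!\!\sum_{\exp(e^{t_{j-1}})<p\le\exp(e^{t_j})}\!\!\frac1{\sqrt p}\Big),
\]
and $\sum_p p^{-1/2}$ over that range is of order $\exp(\tfrac12 e^{t_j})/e^{t_j}$, doubly exponentially larger than any power of $\Delta_j$. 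No $\beta$ makes $\beta^{-\Delta_j^{10A}}$ compete with this, so the claimed $e^{-\Delta_j^{A-1}}$ error is not obtained. A secondary issue: your $P$, with frequencies in $\Delta_j\Z$, is periodic of period $\Delta_j^{-1}$---exactly the length of the target interval---so it cannot localize; this is harmless for the stated inequality but would make the resulting $\mathcal D$ useless in the companion Lemma~\ref{lem: D to Y}, where an upper bound $\E[|\mathcal D|^2]\le\PP(\cdot)+\text{small}$ is needed. The remedy for both problems is exactly the paper's move: truncate in powers of $x$, not over primes.
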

\begin{proof}
  Lemma 6 in \cite{argbourad20} states that for any $\Delta, A \geq 3 $, there exists an entire function $G_{\Delta, A}(x) \in L^2(\mathbb{R})$ such that for some absolute constant $c>0$:
  \begin{enumerate}
  \item the Fourier transform $\widehat{G}_{\Delta, A}(x)$ is supported on $[-\Delta^{2A}, \Delta^{2A}]$;
  \item 
    $0 \leq G_{\Delta, A}(x) \leq 1$
    for all $x \in \mathbb{R}$;
  \item
    $\mathbf{1}(x \in [0, \Delta^{-1}]) \leq G_{\Delta, A}(x) \cdot (1 + c e^{-\Delta^{A - 1}});$
  \item
    $G_{\Delta, A}(x) \leq \mathbf{1}(x \in [-\Delta^{-A/2} , \Delta^{-1} + \Delta^{-A/2}]) + c e^{-\Delta^{A - 1}}; $
\item 
  $\int_{\mathbb{R}} |\widehat{G}_{\Delta, A}(x)| \rd x \leq 2\Delta^{2A} .$
  \end{enumerate}
  From the property (3), we get
  \begin{equation}
    \label{eqn: Y1}
    \1(Y_j\in [u_j,u_j+\Delta_j^{-1}])\leq |G_{\Delta_j,A}(Y_j-u_j)|^2 (1 + c e^{-\Delta_j^{A - 1}}).
  \end{equation}
  Writing $G_{\Delta_j,A}$ in terms of its Fourier transform, we have by truncating the exponential at $\nu=\Delta_j^{10A}$ (this choice will be motivated by the estimate \eqref{eqn: error2} below):
  \begin{equation}
    \label{eqn: G to D}
    \begin{aligned}
      G_{\Delta_j,A}(x)&=\int_\R e^{2\pi \ii \xi x}\widehat G_{\Delta_j,A}(\xi)\rd \xi\\
      &=\sum_{k\leq \nu}\frac{(2\pi \ii x)^k}{k!} \int_\R \xi^k\widehat G_{\Delta_j,A}(\xi)\rd \xi+ \OO^*\Big(\frac{(2\pi)^\nu x^\nu}{\nu!} \int_\R \xi^\nu\widehat G_{\Delta_j,A}(\xi)\rd \xi \Big),
    \end{aligned}
  \end{equation}
  where $\OO^*$ means that implicit constant is smaller than $1$ in absolute value. 
  The polynomial term in~\eqref{eqn: G to D} is our definition of the polynomial $\mathcal D_{\Delta_j,A}(x)$ in~\eqref{eqn: approx 1}. Since the $Y_j$ is a sum over primes in $(\exp(e^{t_{j-1}}),\exp(e^{t_j})]$, it is clear that $\mathcal D_{\Delta_j,A}(Y_j-u_j)$ is a Dirichlet polynomial involving integers with prime factors in that interval and that its length is at most $\exp(2e^{t_j}\Delta_j^{10A})$. (The factor $2$ comes from the fact that $Y_j$ includes squares of primes.) It remains to estimate the error term. 
  Since Equation \eqref{eqn: approx 1} is trivial if $|Y_j-u_j|>\Delta_j^{-1}$, we assume without loss of generality that $|Y_j-u_j|\leq\Delta_j^{-1}$. Therefore the error term is
  \begin{equation}
    \label{eqn: error}
    \begin{aligned}
      \frac{(2\pi)^\nu}{\nu!} \int_\R \xi^\nu\widehat G_{\Delta_j,A}(\xi)\rd \xi 
      &\leq \frac{(2\pi)^\nu}{\nu!} \int_\R |\xi|^\nu|\widehat G_{\Delta_j,A}(\xi)|\rd \xi
      &\leq \frac{(2\pi)^\nu }{\nu!} \cdot 2\Delta_j^{2A(\nu+1)}\leq \frac{(100)^\nu}{\nu^{\nu}}\Delta_j^{3A\nu},
    \end{aligned}
  \end{equation}
  where we use properties (1) and (5) above. 
  This is $e^{-\Delta^{4A}}$ for the choice $\nu=\Delta_j^{10A}$. Putting this back in \eqref{eqn: Y1} yields
  \[
  \1(Y_j\in [u_j,u_j+\Delta_j^{-1}])\leq |\mathcal D_{\Delta_j,A}(Y_j-u_j)+\OO^*(e^{-\Delta_j^{4A}})|^2\cdot (1 + c e^{-\Delta_j^{A - 1}}).
  \]
  The term $\OO^*(e^{-\Delta_j^{4A}})$ can be absorbed in the multiplicative error by adjusting $c$. 
  The choice $A\geq 10$ ensures a decay much better than Gaussian.
\end{proof}

It follows from Equation \eqref{eqn: increment} and Lemma \ref{lem: D} that
\begin{equation}
  \1(B_\ell\cap C_\ell\cap \{S_{t_\ell}\in (w,w+1]\})\leq \sum_{\mathbf u\in \mathcal I}\prod_{j}  |\mathcal D_{\Delta_j, A}(Y_j-u_j)|^2(1+c e^{-\Delta_j^{A-1}}).
\end{equation}
We choose $A=20$ for the rest of the proof.
The product over $j$ of $ |\mathcal D_{\Delta_j, A}(Y_j-u_j)|^2$ is a Dirichlet polynomial of length at most
\[
\exp\big(2\sum_{j=1}^\ell e^{t_j}\Delta_j^{10A}\big)\leq\exp\Big(2e^{t_\ell}\Delta_\ell^{10A}\sum_{j=1}^\ell \Big(\frac{\log_{\ell-1}t}{\log_{j-1}t}\Big)^{\mathfrak s-10A}\Big)\leq  \exp(2e^{t_\ell}\Delta_\ell^{10A}),
\]
since $\mathfrak s\geq 10^6>10 A$ by the choice of $\mathfrak s$ in \eqref{eqn: s} and the choice $A=20$.
The mean-value theorem for Dirichlet polynomials, see Lemma \ref{lem: mean value DP} (which applies by the assumption on $\ell$), implies
\begin{equation}
  \label{eqn: random model}
  \E\Big[\prod_{j}  |\mathcal D_{\Delta_j, A}(Y_j-u_j)|^2\Big]=(1+\oo(1))\E\Big[\prod_{j}  |\mathcal D_{\Delta_j, A}(\mathcal Y_j-u_j)|^2\Big],
\end{equation}
where $(\mathcal Y_j, j\leq \ell)$ are independent random variables of the form 
\begin{equation}
  \mathcal Y_j=\sum_{e^{t_{j-1}}<\log p \leq e^{t_j}} \frac{\cos \theta_p}{p^{1/2}}+ \frac{\cos^2 \theta_p}{2p},
\end{equation}
and $(\theta_p, p\text{ primes})$ are independent random variables uniform on $[0,2\pi]$. 
 It remains to estimate $\E\Big[ |\mathcal D_{\Delta_j, A}(\mathcal Y_j-u_j)|^2\Big]$ for each $j$.
\begin{lem}
  \label{lem: D to Y}
  With the above notation, we have for $j\leq \ell$ and an absolute constant $c>0$,
  \[
  \E\Big[ |\mathcal D_{\Delta_j, A}(\mathcal Y_j-u_j)|^2\Big]\leq \PP(\mathcal Y_j-u_j\in [-\Delta_j^{-A/2} , \Delta_j^{-1} + \Delta_j^{-A/2}]) + c e^{-\Delta_j^{A - 1}}.
  \]
\end{lem}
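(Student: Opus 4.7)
The plan is to compare $\mathcal{D}_{\Delta_j, A}$ to $G_{\Delta_j, A}$ through the Taylor remainder, reducing the problem to estimating the expected tail of that remainder. Recall from the proof of Lemma~\ref{lem: D} that $\mathcal{D}_{\Delta_j, A}(x)$ is the order-$\nu$ partial sum of the Fourier-type expansion of $G_{\Delta_j, A}(x)$ with $\nu=\Delta_j^{10A}$; let $\mathcal{R}(x)=\mathcal{D}_{\Delta_j, A}(x)-G_{\Delta_j, A}(x)$ denote the corresponding remainder. Since $0\leq G_{\Delta_j, A}(x)\leq 1$ by property~(2), we have $G_{\Delta_j, A}(x)^2\leq G_{\Delta_j, A}(x)$, and the triangle inequality yields
\begin{equation*}
|\mathcal{D}_{\Delta_j, A}(x)|^2\leq G_{\Delta_j, A}(x)+2|\mathcal{R}(x)|+|\mathcal{R}(x)|^2.
\end{equation*}
Taking expectations with respect to the law of $\mathcal{Y}_j-u_j$ and applying property~(4) to the first term directly produces $\PP(\mathcal{Y}_j-u_j\in[-\Delta_j^{-A/2},\Delta_j^{-1}+\Delta_j^{-A/2}])+c e^{-\Delta_j^{A-1}}$. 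So it remains to show that $\E[|\mathcal{R}(\mathcal{Y}_j-u_j)|]+\E[|\mathcal{R}(\mathcal{Y}_j-u_j)|^2]\ll e^{-\Delta_j^{A-1}}$.

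To bound these, split the real line into a bulk region $\{|x|\leq \Delta_j^{8A}\}$ and a tail region $\{|x|>\Delta_j^{8A}\}$. On the bulk, the standard Taylor tail bound combined with properties~(1) and~(5) (from which $\int|\xi|^k|\widehat G_{\Delta_j, A}(\xi)|\rd \xi\leq 2\Delta_j^{2A(k+1)}$) yields
\begin{equation*}
|\mathcal{R}(x)|\ll \Delta_j^{2A}\bigl(2\pi e|x|\Delta_j^{2A}/\nu\bigr)^{\nu},
\end{equation*}
which for $|x|\leq\Delta_j^{8A}$ and $\nu=\Delta_j^{10A}$ is super-polynomially smaller than $e^{-\Delta_j^{A-1}}$. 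On the tail region, use the crude expansion bound $|\mathcal{R}(x)|^2\ll \Delta_j^{4A} \exp(4\pi|x|\Delta_j^{2A})$ obtained by summing the terms of $\mathcal{D}_{\Delta_j, A}(x)$ against the same Fourier coefficient bounds. This is paired with the sub-Gaussian concentration of $\mathcal{Y}_j$: since $\mathcal Y_j$ is a sum of independent uniformly bounded summands with total variance $\OO(\Delta_j)$, Hoeffding's inequality gives $\PP(|\mathcal{Y}_j-u_j|>s)\ll \exp(-cs^2/\Delta_j)$ once $s$ exceeds the crude bound $|u_j|\leq 4\Delta_j+2$ from~\eqref{eqn: u}. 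Completing the square in the resulting exponential integral produces a contribution bounded by $\exp(-c\Delta_j^{16A-1})$, easily absorbed by the target error.

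The main obstacle is calibrating the cutoff $\Delta_j^{8A}$ so that the Taylor remainder and the sub-Gaussian tail are simultaneously negligible: taking it too small leaves a large bulk error, and too large leaves a large tail error. The generous gap $\nu=\Delta_j^{10A}\gg\Delta_j^{A-1}$ together with the quadratic rate of the sub-Gaussian tail makes this calibration comfortable for $A\geq 10$, and summing the bulk and tail contributions yields the claimed inequality.
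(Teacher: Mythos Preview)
Your approach mirrors the paper's: both split into a bulk region where the Taylor remainder $\mathcal R=\mathcal D_{\Delta_j,A}-G_{\Delta_j,A}$ is pointwise small, and a tail region where the sub-Gaussian concentration of $\mathcal Y_j$ overwhelms the crude exponential growth of the polynomial. The paper handles the tail via Cauchy--Schwarz against $\E[|\mathcal D_{\Delta_j,A}|^4]$ and Chernoff through Lemma~\ref{lem: model MGF}, while you bound $|\mathcal R|^2$ directly and invoke Hoeffding; these variants are interchangeable here.

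However, your calibration of the cutoff is off. With $\nu=\Delta_j^{10A}$, your own remainder bound at the bulk boundary $|x|=\Delta_j^{8A}$ reads
\[
\Delta_j^{2A}\bigl(2\pi e\,|x|\,\Delta_j^{2A}/\nu\bigr)^{\nu}=\Delta_j^{2A}(2\pi e)^{\nu},
\]
which is enormous, not ``super-polynomially smaller than $e^{-\Delta_j^{A-1}}$''. For the Taylor tail to decay you need the base $2\pi e\,|x|\Delta_j^{2A}/\nu$ to be strictly less than $1$, so the bulk cutoff must lie strictly below $\Delta_j^{8A}$. The paper takes $\Delta_j^{6A}$, which gives $(2\pi e\,\Delta_j^{-2A})^{\nu}\leq e^{-\Delta_j^{4A}}$; your tail argument still works comfortably at that smaller cutoff since $12A-1>4A+1$ for $A\geq 10$. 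With this single adjustment your proof is correct and coincides with the paper's.
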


\begin{proof}
  The idea is to use the approximation with $G_{\Delta_j, A}$ in reverse. For this, it is necessary to re-introduce the error term in Equation \eqref{eqn: G to D}, assuming it is small enough.
  On the event $|\mathcal Y_j-u_j|\leq \Delta_j^{6A}$, the estimate \eqref{eqn: error} becomes
  \begin{equation}
    \label{eqn: error2}
    \begin{aligned}
      \frac{(2\pi)^\nu\Delta_j^{6A\nu}}{\nu!} \int_\R \xi^\nu\widehat G_{\Delta_j,A}(\xi)\rd \xi 
      &\leq \frac{(2\pi)^\nu }{\nu!} \cdot \Delta_j^{2A(4\nu+1)}\leq \frac{(100)^\nu}{\nu^{\nu}}\Delta_j^{9A\nu}.
    \end{aligned}
  \end{equation} 
  This is $e^{-\Delta_j^{4A}}$ for the choice $\nu=\Delta_j^{10A}$. On the event $|\mathcal Y_j-u_j|>\Delta_j^{6A}$, Cauchy-Schwarz inequality yields
  \[
  \E\Big[ |\mathcal D_{\Delta_j, A}(\mathcal Y_j-u_j)|^2\1(|\mathcal Y_j-u_j|>\Delta_j^{6A})\Big]\leq 
  \E\Big[ |\mathcal D_{\Delta_j, A}(\mathcal Y_j-u_j)|^4\Big]^{1/2}\cdot \PP\big(|\mathcal Y_j-u_j|>\Delta_j^{6A})\big)^{1/2}.
  \]
  The fourth moment of $ \E[|\mathcal D_{\Delta_j, A}(\mathcal Y_j(h)-u_j)|^4]$ is bounded by
  \begin{equation}
    \label{eqn: CS}
    \E\Big[ \Big ( \sum_{\ell \leq \Delta_j^{10 A}} \frac{(2\pi )^{\ell}}{\ell!}  2\Delta_j^{2A(\ell+1)}  (|\mathcal Y_j|+|u_j|)^{\ell} \Big )^4\Big ]  
    \ll  \Delta_j^{2A} \, \E[\exp( 9\pi   \Delta_j^{2A}  (| \mathcal Y_j|+4\Delta_j))] \ll e^{\Delta_j^{5A}},
  \end{equation}
  where we used Equation \eqref{eqn: u} and the fact that $\E[e^{\lambda\mathcal Y_j}]\ll \exp(\lambda^2\Delta_j)$ by Lemma \ref{lem: model MGF}.
  The probability is bounded by Chernoff's inequality using the same lemma
  \begin{equation}
    \label{eqn: chernoff}
    \PP\big(|\mathcal Y_j-u_j|>\Delta_j^{6A}\big)\ll \exp(-\frac{1}{2}\Delta_j^{6A}).
  \end{equation}
  Equations \eqref{eqn: CS} and \eqref{eqn: chernoff} together imply
  \[
  \E\Big[ |\mathcal D_{\Delta_j, A}(\mathcal Y_j-u_j)|^2\1(|\mathcal Y_j-u_j|>\Delta_j^{6A})\Big]\leq e^{-\frac{1}{8}\Delta_j^{6A}}.
  \]
  Altogether, we have shown
  \[
  \E\Big[ |\mathcal D_{\Delta_j, A}(\mathcal Y_j-u_j)|^2\Big]
  \leq \E[|G_{\Delta_j,A}(\mathcal Y_j-u_j)+\OO(e^{-\Delta_j^{4A}})|^2]+e^{-\frac{1}{8}\Delta_j^{6A}}.
  \]
  Since $G_{\Delta_j,A}$ is in $[0,1]$ the error inside the expectation can be made additive. 
  The statement of the lemma then follows from property (4) of the function $G_{\Delta_j,A}$.
\end{proof}

The proof of Lemma \ref{lemma_3} can now be concluded.
\begin{proof}[Proof of Lemma \ref{lemma_3}]
  Let's first notice that by a direct application of Berry-Esseen theorem, see Lemma \ref{lem: ABH}, we have for any $j\geq 2$
  \begin{equation}
    \label{eqn: Y to N}
    \PP(\mathcal Y_j-u_j\in [-\Delta_j^{-A/2} , \Delta_j^{-1} + \Delta_j^{-A/2}])=\PP(\mathcal N_j-u_j\in  [-\Delta_j^{-A/2} , \Delta_j^{-1} + \Delta_j^{-A/2}])+\OO(e^{-ce^{t_{j-1}}}).
  \end{equation}
  where $\mathcal N_j$ is a Gaussian random variable of mean $0$ with variance $\frac{1}{2}(t_j-t_{j-1})+\oo(1)$.
  For $j=1$, we use the less accurate estimate in Lemma \ref{le:saddlepoint}:
  \[
  \PP(\mathcal Y_1-u_1\in [-\Delta_1^{-A/2} , \Delta_1^{-1} + \Delta_1^{-A/2}])\ll \PP(\mathcal N_1-u_1\in  [-\Delta_1^{-A/2} , \Delta_1^{-1} + \Delta_1^{-A/2}]).
  \]
  Since $|u_j|<4\Delta_j$ by Equation~\eqref{eqn: u}, we have that $u_j\cdot \Delta_j^{-A/2}$ is very small, and therefore by using a Gaussian estimate, we get for all $j\geq 2$,
  \[
  \PP(\mathcal N_j-u_j\in  [-\Delta_j^{-A/2} , \Delta_j^{-1} + \Delta_j^{-A/2}])=\PP(\mathcal N_j-u_j\in [0,\Delta_j^{-1}])(1+\OO(\Delta_j)^{-A/4}).
  \]
  For $j=1$, the corresponding estimate holds with $\ll$ instead of $=$. 
  We also notice that the error term in \eqref{eqn: Y to N} is much smaller than the probability and can be absorbed in the multiplicative error above.
  Therefore we have shown for $j\geq 2$ that
  \[
  \PP(\mathcal Y_j-u_j\in [-\Delta_j^{-A/2} , \Delta_j^{-1} + \Delta_j^{-A/2}])=\PP(\mathcal N_j-u_j\in [0,\Delta_j^{-1}])(1+\OO(\Delta_j)^{-A/4}),
  \]
  and for $j=1$
  \[
  \PP(\mathcal Y_1-u_1\in [-\Delta_1^{-A/2} , \Delta_1^{-1} + \Delta_1^{-A/2}])\ll\PP(\mathcal N_1-u_1\in [0,\Delta_1^{-1}])(1+\OO(\Delta_1)^{-A/4}).
  \]
  Putting this estimate back in Equation \eqref{eqn: increment} using Lemmas \ref{lem: D} and \ref{lem: D to Y} (noticing again that the additive error in Lemma \ref{lem: D to Y} can be made multiplicative), it follows that
  \[
  \PP(B_\ell\cap C_\ell\cap \{S_{t_\ell}\in (w,w+1]\})
    \ll \sum_{\mathbf u \in \mathcal I} \prod_{j=1}^\ell \PP(\mathcal N_j\in [u_j,u_j+\Delta_j^{-1}])(1+\OO(\Delta_j)^{-A/4}).
  \]
  It remains to re-express the events in terms of the partial sums of $\mathcal N_j$, exactly as we did in Equation \eqref{eqn: inclusion} but in reverse. By the definition of $\mathcal I$ and the summability of $\Delta_j^{-1}$, we conclude that
  \[
  \PP(B_\ell\cap C_\ell\cap \{S_{t_\ell}\in (w,w+1]\})\ll \PP\big(\sum_{j=1}^{\ell}\mathcal N_j\in (w-1,w+2]\big).
  \]
  Here, we dropped the intermediate restrictions on the partial sums that are no longer needed. The right side is $\ll \frac{1}{\sqrt{t_\ell}}e^{-w^2/t_\ell}$ as claimed.

\end{proof}

\subsection{Proof of Lemma \ref{lemma_4 easy} and Lemma \ref{lemma_4}}
\label{sect: lemma 4}
We prove Equation \eqref{eqn: lemma 4}. The proof of Lemma \ref{lemma_4 easy} and of Equation \eqref{eqn: lemma 4 easy} are similar and simpler.
The proof follows closely the one of Lemma \ref{lemma_3} with an additional tool from \cite{argbourad20}.
Given $\ell \geq 1$, a Dirichlet polynomial $\mathcal{Q}$ is said to be degree-$e^{t_\ell}$ well-factorable if it can be expressed as
\[
\prod_{1 \leq \lambda \leq \ell} \mathcal{Q}_{\lambda}(s),\quad \text{ where } \quad  \mathcal{Q}_\lambda(s) = \sum_{\substack{p | m \implies \log p \in (e^{t_{\lambda - 1}}, e^{t_\lambda}] \\ \Omega_{\lambda}(m) \leq 10 (t_{\lambda} - t_{\lambda - 1})^{10^4}}} \frac{\gamma(m)}{m^s},
\]
and $\gamma$ are arbitrary coefficients such that $|\gamma(m)| \leq \exp(\tfrac{1}{500} e^t)$ for every $m \geq 1$.
We need the following twisted fourth moment estimate.
\begin{lem}[Lemma 9 in \cite{argbourad20}]
  \label{lemma_9}
  Let $\ell \geq 0$ be such that $\exp(10^6 (t_{\ell+1}- t_{\ell})^{10^5} e^{t_{\ell + 1}}) \leq \exp(\tfrac {1}{100} e^t)$. 
  Let $\mathcal{Q}$ be a degree-$e^{t_{\ell+1}}$ well-factorable Dirichlet polynomial. Then, we have
  \[
  \mathbb{E} \Big [ |\zeta \mathcal{M}_{1} \cdots  \mathcal{M}_{\ell+1}|^4
  \cdot |\mathcal{Q}|^2  \Big ] 
  \ll  e^{4(t-t_{\ell+1})} \, \mathbb{E} \Big [ |\mathcal{Q}|^2\Big ].
  \]
\end{lem}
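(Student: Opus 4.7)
The plan is to reduce this twisted fourth moment estimate to a combination of an approximate functional equation for $\zeta^2$ and a mean value computation on the resulting mollified Dirichlet polynomial. The driving heuristic is that $\zeta(1/2+\ii\tau)\,\mathcal{M}_1\cdots\mathcal{M}_{\ell+1}$ behaves like a ``tail zeta'' built only from primes $p$ with $\log\log p>t_{\ell+1}$: each $\mathcal{M}_j$ is a truncated M\"obius inversion of the local Euler factor on primes in the range $(\exp(e^{t_{j-1}}),\exp(e^{t_j})]$, so the product mollifies away the contribution of primes below $\exp(e^{t_{\ell+1}})$. The expected fourth moment of such a tail is of order $e^{4(t-t_{\ell+1})}$, matching the claim up to the twisting factor $\E[|\mathcal{Q}|^2]$.

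First, I would invoke the approximate functional equation to write $\zeta(1/2+\ii\tau)^2$ as a Dirichlet polynomial of length $\asymp T$, converting $\E[|\zeta\,\mathcal{M}_1\cdots\mathcal{M}_{\ell+1}|^4|\mathcal{Q}|^2]$ into the mean square of a Dirichlet polynomial whose total length is bounded by $T\cdot\exp(\tfrac{2}{100}e^t)\ll T^{1+\oo(1)}$. Montgomery--Vaughan's mean value theorem then isolates the diagonal $\sum_n|\alpha_n|^2/n$ with acceptable error. Second, I would exploit the disjoint prime supports of the $\mathcal{M}_j$'s together with the well-factorable decomposition of $\mathcal{Q}$ to factor this diagonal as a product of local Euler pieces. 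On primes in the $j$-th range, the M\"obius coefficients of $|\mathcal{M}_j|^4$ combine with the $d(n)^2$ arising from $|\zeta^2|^2$ to produce local factors whose telescoping over $j=1,\dots,\ell+1$ collapses the unmollified fourth moment $e^{4t}$ down to $e^{4(t-t_{\ell+1})}$, while the $\mathcal{Q}$-part separates cleanly as $\E[|\mathcal{Q}|^2]$.

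The hard part is handling the off-diagonal contribution once four mollifiers are inserted. The unmollified fourth moment is of size $e^{4t}$, and the required saving of $e^{4t_{\ell+1}}$ must come entirely from cancellation driven by the M\"obius coefficients in $\mathcal{M}_1\cdots\mathcal{M}_{\ell+1}$. Carrying this out rigorously calls for a sharp shifted fourth moment formula of Conrey--Motohashi or Bettin--Chandee--Radziwi{\l}{\l} type, together with a careful residue computation verifying that the four pole-like shifts at the origin are annihilated by the mollifier and leave only the anticipated tail contribution. The hypothesis $\exp(10^6(t_{\ell+1}-t_\ell)^{10^5}e^{t_{\ell+1}})\leq T^{1/100}$ is precisely what keeps the total mollifier length within reach of those twisted fourth moment estimates, and the truncation $\Omega_j(m)\leq(t_j-t_{j-1})^{10^5}$ in each $\mathcal{M}_j$ contributes only a negligible error via Rankin's trick (as in Lemma~\ref{lemma_23}).
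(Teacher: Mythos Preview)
The paper does not prove this lemma; it is quoted verbatim from \cite{argbourad20} as a black box, so there is no in-paper argument to compare against. Your sketch captures the correct heuristic --- the mollifiers cancel the small-prime part of the Euler product, leaving a tail whose fourth moment is $e^{4(t-t_{\ell+1})}$ --- and correctly identifies the off-diagonal as the crux.

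There is, however, a genuine gap in your first step. Writing $\zeta^2$ via the approximate functional equation and multiplying by $(\mathcal{M}_1\cdots\mathcal{M}_{\ell+1})^2\mathcal{Q}$ produces a Dirichlet polynomial of length $N\gg T$, and Montgomery--Vaughan applied to a polynomial of length $N$ yields an error of size $(N/T)$ times the diagonal. When $N=T^{1+\varepsilon}$ this error swamps the diagonal, so Montgomery--Vaughan does \emph{not} isolate it with acceptable error; the off-diagonal is of the same order as the main term you hope to recover. The actual proof in \cite{argbourad20} does not pass through a raw mean-value theorem but relies instead on a precise twisted fourth moment formula (in the lineage of Hughes--Young and Bettin--Chandee--Radziwi{\l}{\l}), which evaluates $\E[|\zeta|^4|A|^2]$ for $A$ of length up to a small power of $T$ with explicit polynomial-in-$\log T$ main terms and a power-saving error. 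The combinatorial work is then to check that the M\"obius structure of the $\mathcal{M}_j$ collapses those main terms to the tail contribution $e^{4(t-t_{\ell+1})}\E[|\mathcal{Q}|^2]$, exactly as you anticipate. Your third paragraph is thus closer to the real argument than your first; the approximate-functional-equation-plus-MV route is a dead end, not a preliminary reduction.
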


\begin{proof}[Proof of Lemma \ref{lemma_4}]
  We proceed as in the proof of Lemma \ref{lemma_3} by approximating the indicator function by a Dirichlet polynomial. 
  More precisely, using Equations \eqref{eqn: increment} and \eqref{eqn: approx 1}, the left-hand side of \eqref{eqn: lemma 4} becomes 
  \[
  \ll\sum_{\mathbf u \in \mathcal I} \mathbb{E} \Big [ |\zeta \mathcal{M}_{1} \cdots  \mathcal{M}_{\ell+1}|^4|\mathcal{Q}_\ell|^2
    \prod_j   \mathcal D_{\Delta_j,A}(Y_j-u_j)|^2\Big ] .
  \]
  We choose $A=20$. 
  The polynomial $Q=Q_\ell \prod_j \mathcal D_{\Delta_j,A}(Y_j-u_j)$ is well-factorable, and $\mathcal{Q}_\ell$ is as defined in the statement of Lemma~\ref{lemma_4}.
  Since the coefficients of $\mathcal D_{\Delta_j,A}$ are bounded by $\Delta_j^{2A(\nu+1)}$, the coefficients of $\mathcal Q$ are bounded by $\exp(\tfrac{1}{500} e^t)$.
  Moreover, its length is 
  \[
  \leq \exp(10 e^{t_{\ell+1}} (t_{\ell+1} - t_{\ell})^{10^4})\cdot \exp(2e^{t_\ell}\Delta_\ell^{200})<\exp(e^t/100),
  \]
  since $\mathfrak s\geq 10^6$ and by the assumption on $\ell$.
  This implies by Lemma \ref{lemma_9} that the above is
  \[
  \ll e^{4(t-t_{\ell+1})}\sum_{\mathbf u \in \mathcal I} \mathbb{E} \Big[|\mathcal{Q}_\ell |^2 \prod_j   \mathcal D_{\Delta_j,A}(Y_j-u_j)|^2\Big].
  \]
  The expectation splits by Lemma \ref{lem: splitting}. It remains to proceed as before from Equation \eqref{eqn: random model} to get Equation \eqref{eqn: lemma 4}.
\end{proof}

\section{Proofs of Corollaries}

\subsection{Proof of Corollary \ref{cor: fractional}}
\label{sect: fractional}
Consider the CDF of the random variable $\log|\zeta(1/2+\ii\tau)|$, i.e., $F(V)=\PP(\log|\zeta(1/2+\ii\tau)|\leq V)$. Write for short
\[
S(V)=\PP(\log|\zeta(1/2+\ii\tau)|>V).
\]
Recall that $\tau$ is distributed uniformly on $[T,2T]$, and we write $t=\log\log T$. Clearly, the  moments (cf. Equation~\eqref{eqn: M}) can be written as
\[
M_\beta=\int_{-\infty}^{+\infty} e^{\beta V} \rd F(V).
\]
Integration by parts yields
\begin{equation}
  \label{eqn: M expect}
  M_\beta=-e^{\beta V}S(V)\Big|_{-\infty}^{+\infty}+\int_{-\infty}^{+\infty}\beta e^{\beta V}S(V)\rd V.
\end{equation}
Since $S(V)$ is bounded by one, the boundary term at $-\infty$ is zero. Moreover, Markov's inequality with the fourth moment of zeta [Theorem B~\cite{ing26}] gives
\begin{equation}
  \label{eqn: markov}
  S(V)\leq \frac{1}{2\pi^2} e^{4t} e^{-4V} .
\end{equation}
In particular, this implies that the boundary term at $+\infty$ is zero for $\beta<4$. 
The contribution to negative $V$'s in the integral in Equation \eqref{eqn: M expect} is also negligible since
\begin{equation}
  \label{eqn: negative}
  \int_{-\infty}^{0}\beta e^{\beta V}S(V)\rd V\leq \int_{-\infty}^{0}\beta e^{\beta V}\rd V=1.
\end{equation}
It remains to estimate $\int_{0}^{+\infty}\beta e^{\beta V}S(V)\rd V$. Consider $\beta_-$ and $\beta_+$ such that $0<\beta_-<\beta< \beta_+<4$. 
These have to be chosen close enough to $0$ and to $4$ respectively.
It turns out that the choices 
\begin{align*}
  \beta_-&=\frac{\beta}{4}\\
  \beta_+&=\beta+\frac{3}{4}(4-\beta)=3+\frac{\beta}{4}
\end{align*}
are adequate. The dominant contribution to the $\beta$-moment comes from the interval $[\frac{\beta_-}{2}t, \frac{\beta_+}{2}t]$. Indeed, by Theorem \ref{thm: LD Selberg}, we have
\[
\int_{\frac{\beta_-}{2}t}^{\frac{\beta_+}{2}t}e^{\beta V}S(V)\rd V\ll \int_{\frac{\beta_-}{2}t}^{\frac{\beta_+}{2}t}e^{\beta V}\frac{e^{-V^2/t}}{\sqrt{t}}\rd V
=e^{\frac{\beta^2}{4}t}\int_{\frac{\beta_-}{2}t}^{\frac{\beta_+}{2}t}\frac{e^{-(\frac{\beta}{2}t-V)^2/t}}{\sqrt{t}}\rd V\ll e^{\frac{\beta^2}{4}t}.
\]
The contribution of the intervals $[0,\frac{\beta_-}{2}t]$ is less since it is smaller than
\begin{equation}
  \label{eqn: small beta}
  \int_0^{\frac{\beta_-}{2}t}\beta e^{\beta V}\rd V\leq e^{\frac{\beta^2 t}{8}},
\end{equation}
by the choice of $\beta_-$. For the interval $[\frac{\beta_+}{2}t, \infty]$, we use the bound \eqref{eqn: markov} to get that the contribution is
\[
\leq e^{4t}\int_{\frac{\beta_+}{2}t}^\infty e^{(\beta-4) V}\rd V\leq\frac{1}{4-\beta} e^{t(\frac{\beta\beta_+}{2}-2\beta_+ +4)}.
\]
This is $\ll e^{\beta^2t/4}$ by the choice of $\beta_+$. 

\subsection{Proof of Corollary \ref{cor: max}}
\label{sect: max}

We will require the following discretization result of~\cite{fargonhug07}.  Effectively, this shows that the maximum of concern in Corollary~\ref{cor: max} can be restricted to those $h$ lying $1/\log T$ apart. Corollary~\ref{cor: max} may also be deduced from a more general discretization result of~\cite{argouirad19}, applicable to Dirichlet polynomials.

\begin{lem}[Lemma 2.2 of~\cite{fargonhug07}]\label{lem: discretization}
  Let $t^*$ be such that $|\zeta(1/2+\ii t^*)|=\max_{t\in[T,2T]}|\zeta(1/2+\ii t)|$.  There is an absolute constant $A>0$ such that if $|t-t^*|<A/\log T$ then $2|\zeta(1/2+\ii t)|> |\zeta(1/2+\ii t^*)|$.
\end{lem}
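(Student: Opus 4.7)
The plan is to show that $|\zeta|$ varies by less than a factor of $2$ on a time scale of $A/\log T$ around $t^*$, via a derivative estimate. Writing $s_0=\tfrac{1}{2}+\ii t^*$ and $s=\tfrac{1}{2}+\ii t$, I would start from the identity
\[
\zeta(s)-\zeta(s_0)=\ii\int_{t^*}^t\zeta'(\tfrac{1}{2}+\ii u)\,\rd u,
\]
which yields $|\zeta(s)-\zeta(s_0)|\leq (A/\log T)\sup|\zeta'|$. The aim is to establish $|\zeta'(\tfrac{1}{2}+\ii u)|\leq C(\log T)|\zeta(s_0)|$ uniformly for $|u-t^*|\leq A/\log T$ with $C$ an absolute constant; taking $A<1/(2C)$ then gives $|\zeta(s)-\zeta(s_0)|<\tfrac{1}{2}|\zeta(s_0)|$, whence $2|\zeta(s)|>|\zeta(s_0)|$ as required.

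For the derivative bound, I would apply Cauchy's integral formula on the circle of radius $r=1/\log T$ around $\tfrac{1}{2}+\ii u$:
\[
|\zeta'(\tfrac{1}{2}+\ii u)|\leq (\log T)\max_{|w-(\tfrac{1}{2}+\ii u)|=1/\log T}|\zeta(w)|,
\]
reducing the lemma to a comparison of $\max|\zeta|$ on the small disc about $s_0$ with $|\zeta(s_0)|$. For $w=\sigma+\ii\gamma$ on this disc one has $\gamma\in[T,2T]$ for $T$ large and $|\sigma-\tfrac12|\leq 2/\log T$; the functional equation combined with Stirling gives $|\chi(w)|=\exp(\OO(1))$, pinching $|\zeta(w)|$ between $|\zeta|$-values on two off-critical-line vertical lines up to bounded factors.

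To compare $|\zeta(w)|$ with $|\zeta(s_0)|$, I would invoke the approximate functional equation
\[
\zeta(w)=\sum_{n\leq\sqrt{T/2\pi}}n^{-w}+\chi(w)\sum_{n\leq\sqrt{T/2\pi}}n^{w-1}+\OO(T^{-1/4}),
\]
and use that each term $n^{-w}$ differs from $n^{-s_0}$ by a factor $n^{-(w-s_0)}=\exp(\OO(1))$, since $|w-s_0|\leq 2/\log T$ and $\log n\leq\tfrac12\log T$. Both Dirichlet polynomials and the character $\chi$ therefore differ from their values at $s_0$ by bounded multiplicative factors.

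The main obstacle is the final passage from these pointwise multiplicative bounds to a bound on the full sum $\zeta(w)$ in terms of $\zeta(s_0)$, because cancellation between the two Dirichlet polynomials can a priori make $|\zeta(s_0)|$ substantially smaller than their individual moduli. A cleaner route, and the one I would pursue, works with the Riemann--Siegel $Z$-function $Z(t)=e^{\ii\theta(t)}\zeta(\tfrac12+\ii t)$: this is real-valued and its Riemann--Siegel approximation has Fourier support essentially in $[-\tfrac12\log T,\tfrac12\log T]$, yielding a Bernstein-type inequality $|Z'(t)|\ll(\log T)\max_{|h|\leq 1/\log T}|Z(t+h)|$ with absolute implied constant. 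Since $|\zeta(\tfrac12+\ii t)|=|Z(t)|$ and $|Z(t+h)|\leq|Z(t^*)|$ for $t+h\in[T,2T]$, the desired bound on $|\zeta'|$ follows, and the proof is completed by the mean-value argument of the first paragraph.
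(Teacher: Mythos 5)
The paper does not prove this lemma at all --- it is quoted verbatim as Lemma 2.2 of Farmer--Gonek--Hughes \cite{fargonhug07} --- so the comparison is with what a complete proof would require. Your skeleton is the right one (mean value theorem plus a bound $\sup_{|u-t^*|\leq A/\log T}|\zeta'(\tfrac12+\ii u)|\leq C(\log T)\,|\zeta(\tfrac12+\ii t^*)|$, then choose $A$ small), and you correctly identify that your first route collapses: termwise bounds $n^{-(w-s_0)}=\exp(\OO(1))$ in the approximate functional equation say nothing about the ratio of the sums, since the correction factors have $n$-dependent phases. The problem is that your replacement step is not a proof either. The entire content of the lemma is packed into the asserted ``Bernstein-type inequality'' $|Z'(t)|\ll(\log T)\max_{|h|\leq 1/\log T}|Z(t+h)|$, and this does not follow from ``Fourier support essentially in $[-\tfrac12\log T,\tfrac12\log T]$''. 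For an honest entire function $f$ of exponential type $\sigma$ with $|f|\leq B$ on $\R$ and $|f|\leq\e$ on an interval of length $\asymp 1/\sigma$, Bernstein applied to $f''$ plus a Taylor expansion only gives $|f'|\ll\sigma\sqrt{\e B}$ at the centre, not $\sigma\e$; equivalently, the Riesz interpolation formula writes $f'(t)$ as a weighted sum of samples $f\bigl(t+(2k+1)\pi/(2\sigma)\bigr)$ over all $k\in\Z$, so the derivative is governed by values over many wavelengths, and a maximum over a single window of length $2/\log T$ cannot control it with an absolute constant. On top of this, $Z$ is not band-limited: the Riemann--Siegel frequencies $\theta'(t)-\log n$ drift with $t$, the length of the main sum changes, and the error term (and its derivative) must be handled; and when $t^*$ lies close to an endpoint of $[T,2T]$ the sampling window exits the interval on which you can invoke $|Z(t+h)|\leq|Z(t^*)|$.

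A repairable version of your idea would bound $|Z'(u)|$ by $\log T$ times the maximum of $|Z|$ over a window of length $\asymp 1$ around $u$ (many wavelengths), which still yields the lemma away from the endpoints since those points stay in $[T,2T]$; but establishing even that requires genuine work --- for instance Cauchy's formula on a circle of radius $\asymp 1/\log T$ for $\zeta'$, combined with a comparison of off-critical-line values of $\zeta$ with nearby on-line values (say by representing $\zeta(\sigma+\ii v)$ for $\sigma\geq\tfrac12$ as an average of critical-line values against an integrable kernel, and using the functional equation for $\sigma<\tfrac12$), which is essentially the substance of the cited Farmer--Gonek--Hughes argument. As it stands, the central inequality is asserted rather than proved, and in the stated local form it is not a valid general principle, so the proposal has a genuine gap.
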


Thus, as $\gamma$ ranges over a window of size $A/\log T$, the value of $|\zeta(1/2+\ii \gamma)|$ is close to the maximum within the window. Hence, we deduce via a union bound that, for some universal positive constant $C>0$,
\begin{equation}
  \label{eq: cor max discret}
  \PP\Big(\max_{|h|\leq \log^\theta T}|\zeta(1/2+\ii\gamma+\ii h)|>e^{V}\Big)\leq e^{(1+\theta)t}\cdot\PP\Big(|\zeta(1/2+\ii \gamma)| > \frac{1}{C}e^{V}\Big).
\end{equation}

Corollary~\ref{cor: max} now follows by setting $V=\sqrt{1+\theta}t-\frac{1}{4\sqrt{1+\theta}}\log t +y$ (for $y=\oo(t/\log t)$, $\theta\in[0,3)$), and applying Theorem~\ref{thm: LD Selberg}.
  
\subsection{Proof of Corollary \ref{cor: subcritical}}

{\bf \noindent Case $\beta\geq 0$}:
We write
\[
\mathcal Z_\beta(\tau)=\frac{1}{2e^{\theta t}}\int_{|h|\leq e^{\theta t}} |\zeta(1/2+\ii\tau +\ii h)|^\beta \rd h,
\]
i.e., the left-hand side of Equation~\eqref{eqn: subcritical} normalized by $2e^{\theta t}$ and with the identification $t=\log\log T$. 
The moment $\mathcal Z_\beta$ is a random variable dependent on $\tau$. From now on, we use the probabilistic convention and drop the dependence on $\tau$ from the notation. 
Consider also the (normalized) Lebesgue measure of high points in the interval $[-e^{\theta t}, e^{\theta t}]$ around $\tau$:
\[
\mathcal S(V)=\frac{1}{2e^{\theta t}} \m\{|h|\leq e^{\theta t}: \log |\zeta(1/2+\ii\tau+\ii h)|>V\}.
\]
Proceeding as in the proof of Corollary \ref{cor: fractional}, we have by integration by parts:
\[
\mathcal Z_\beta=- e^{\beta V} \mathcal S(V)\Big|_{-\infty}^{+\infty}+\beta\int_{-\infty}^{\infty} e^{\beta V} \mathcal S(V)\rd V .
\]
Again, since $\mathcal S(V)\leq 1$ for all $V$, we have that the boundary term at $V=-\infty$ is $0$. 

For $V=+\infty$, it is necessary to restrict the estimate to a good event.  Define
\begin{equation}
  \label{eqn: E}
  E=\left\{\max_{|h|\leq e^{\theta t}} \log |\zeta(1/2+\ii\tau+\ii h)|\leq m(t)+A\right\},
\end{equation}
where
\begin{equation}
  \label{eqn: m}
  m(t)=\sqrt{1+\theta}t-\frac{1}{4\sqrt{1+\theta}}\log t=\frac{\beta_c}{2}t-\frac{1}{2\beta_c}\log t,
\end{equation}
and $\beta_c=2\sqrt{1+\theta}$. In view of Corollary \ref{cor: max} with the choice $y=A$ (and since $\theta\in[0,3)$ by assumption), the probability of $E^c$ is
\begin{equation}
  \label{eqn: E^c}
  \PP(E^c)\ll e^{-\beta_c A}.
\end{equation}
This handles the upper limit $V=+\infty$.

On the event $E$, there are clearly no values of $V$ beyond $m(t)+A$.  Moreover, as in the proof of Corollary \ref{cor: fractional}, the contribution of negative values is of order one (cf. Equation \eqref{eqn: negative}). 
Finally, the bound \eqref{eqn: small beta} still holds.
The problem is therefore reduced to finding a good event on which to bound
\begin{equation}
  \label{eqn: Z reduction}
  \int_{\beta t/8}^{m(t)+A} e^{\beta V} \mathcal S(V) \rd V.
\end{equation}
The idea now is that $\mathcal S(V)$ should behave like $e^{-V^2/t-1/2\log t}$, thanks to Theorem \ref{thm: LD Selberg}.
In particular, as can be seen easily in the proof, the dominant contribution to the integral should come from $V$'s around $\beta t/2$. 
Hence, the specifics of the interval of integration do not matter much as long as it contains this optimizer. 
The main technical difficulty in implementing this idea is to control $\mathcal S(V)$ on a range of $V$ simultaneously.

Consider $(V_j, 1\leq j\leq J)$ the set of $V$'s in $[\tfrac{\beta}{8} t, m(t)+A]\cap \sqrt{t}\mathbb Z$, and additionally define $V_0=V_1-\sqrt{t}$ and $V_{J+1}=V_J+\sqrt{t}$.
(The choice of the mesh size $\sqrt{t}$ is informed by the typical fluctuation of $\log |\zeta|$.)
Define
\[
I_j=\int_{V_j}^{V_{j+1}} e^{\beta V} \mathcal S(V) \rd V, \quad 0\leq j\leq J.
\]
Consider the events
\begin{equation}
  \label{eqn: E_j}
  E_j=\left\{I_j \leq a_j \int_{V_j}^{V_{j+1}} e^{\beta V} \frac{e^{-V^2/t}}{\sqrt{t}} \rd V\right\},
\end{equation}
for a collection of $a_j$'s to be fixed later. 

We have $\PP(E_j^c)\ll a_j^{-1}$, since by linearity and Theorem \ref{thm: LD Selberg}
\begin{equation}
  \label{eqn: E[I]}
  \begin{aligned}
    \E[I_j]=\int_{V_j}^{V_{j+1}} e^{\beta V} \E[\mathcal S(V)] \rd V&\ll  \int_{V_j}^{V_{j+1}} e^{\beta V} \frac{e^{-V^2/t}}{\sqrt{t}} \rd V.
\end{aligned}
\end{equation}
The good event to consider is
\[
G=E\cap\left(\bigcap_{j} E_j\right),
\]
so that by \eqref{eqn: E^c}
\begin{equation}
  \label{eq:prob_gee_complement}
  \PP(G^c)\ll\sum_j a_j^{-1}+ e^{-\beta_c A}.
\end{equation}
On the event $G$, we have 
\begin{equation}
  \label{eq:subcritical_gee_bound}
  \int_{\beta t/8}^{m(t)+A} e^{\beta V} \mathcal S(V) \rd V\leq e^{\beta^2 t/4}\sum_j a_j \int_{V_j}^{V_{j+1}} \frac{e^{-(\frac{\beta}{2}t-V)^2/t}}{\sqrt{t}} \rd V.
\end{equation}
Since the quadratic form is maximized at $\beta t/2$, we pick for $a_j$:
\[
a_j=
A\cdot 
\begin{cases}
\big(\frac{\beta}{2}\sqrt{t}-\frac{V_j}{\sqrt{t}}\big)^2+\frac{1}{100}\ &\text{if $V_j> \beta t/2$},\\
\big(\frac{\beta}{2}\sqrt{t}-\frac{V_{j+1}}{\sqrt{t}}\big)^2+\frac{1}{100}\ &\text{if $V_j\leq \beta t/2$ and $V_{j}<V_{j+1}\leq \beta t/2$}\\
\frac{1}{100}&\text{if $V_j\leq \beta t/2$ and $V_{j+1}>\beta t/2$}.
\end{cases}
\]
(The term $1/100$ is simply there to make sure $a_j$ is bounded away from $0$.)
This choice ensures that $a_j\leq A\big(\tfrac{1}{100}+(\frac{\beta}{2}\sqrt{t}-\frac{V}{\sqrt{t}})^2\big)$ for $V\in [V_j, V_{j+1}]$.

Thus, on one hand from Equation~\ref{eq:subcritical_gee_bound}, we have on $G$
\begin{align*}
  \int_{\beta t/8}^{m(t)+A} e^{\beta V} \mathcal S(V) \rd V
  &\leq A e^{\beta^2t/4}\int_{\beta t/8}^{m(t)+A}  \left(\frac{1}{100}+\left(\frac{\beta}{2}\sqrt{t}-\frac{V}{\sqrt{t}}\right)^2\right) \cdot \frac{e^{-(\frac{\beta}{2}t-V)^2/t}}{\sqrt{t}} \rd V\\
  &\leq A e^{\beta^2 t/4}\int_{\frac{(\beta-\beta_c)}{2}\sqrt{t}+\oo(1)}^{\frac{3\beta}{8}\sqrt{t}} \left(\frac{1}{100}+u^2\right) e^{-u^2}\rd u\\
  &\leq A e^{\beta^2 t/4},
\end{align*}
where the last bound follows by integrating over the whole line.
On the other hand, from Equation~\eqref{eq:prob_gee_complement} the probability of $G^c$ is
\[
\PP(G^c)\ll \sum_j a_j^{-1}+e^{-\beta_c A}\ll \frac{1}{A}.
\]
The $a_j$'s are summable since $V_j\in \sqrt{t}\mathbb Z$. This proves Equation \eqref{eqn: subcritical}.\\

{\bf \noindent Case $\beta>\beta_c$}:
We can use a reduction as in the previous case. We use the same event $E$ in \eqref{eqn: E} for the maximum. 
For a lower bound on the values of $V$, we take $\beta_c t/4$ since
\[
\int_0^{\beta_c t/4}e^{\beta V}\mathcal S(V) \rd V\leq e^{\tfrac{\beta_c}{4}\beta t},
\]
which is much smaller than the the desired bound. Therefore, it remains to estimate
\begin{equation}
  \label{eqn: Z reduction 2}
  \int_{\beta_c t/4}^{m(t)+A} e^{\beta V} \mathcal S(V) \rd V.
\end{equation}
The partitioning of the interval of integration is more delicate as it is close to the level of the maximum. A mesh size of $1$ instead of $\sqrt{t}$ is needed. 
More precisely, we take $(V_j, 1\leq j\leq J)$ to be $[\tfrac{\beta_c}{4}t, m(t)+A]\cap \mathbb Z$. 
The events $E_j$ are defined as in \eqref{eqn: E_j}. As before, we take $G=E\cap(\bigcap_j E_j)$.
The difference here is that the optimizer lies outside the interval, so the bound can be sharpened. 
On the event $G$, the above becomes
\[
\leq\sum_j a_j \int_{V_j}^{V_{j+1}} e^{\beta V} \frac{e^{-V^2/t}}{\sqrt{t}}\rd V.
\]
The change of variable $V=m(t)+y$ yields (with $y_j=V_j-m(t)$)
\begin{equation}
  \label{eqn: calc supercritical}
  \begin{aligned}
    e^{\beta m(t)}\sum_j a_j \int_{y_j}^{y_{j+1}} &e^{\beta y}\frac{e^{-m(t)^2/t}}{\sqrt{t}}e^{-\frac{2m(t)y}{t}}e^{-y^2/t}\rd y\\
    &\leq e^{\beta m(t)-(1+\theta)t} \sum_j a_j  \int_{y_j}^{y_{j+1}} e^{(\beta-\beta_c)y}e^{y\tfrac{(\log t)^2}{4\beta_c^2 t}}\rd y,
  \end{aligned}
\end{equation}
since $m(t)^2=(1+\theta)t -\frac{1}{2}\log t + \tfrac{(\log t)^2}{4\beta_c^2}$ and $e^{-y^2/t}\leq 1$. 
We pick $a_j=A(1+y_j^2)$ if $y_j$ is positive, and $a_j=A(1+y_{j+1}^2)$ if $y_{j+1}$ is negative. If $y_j<0<y_{j+1}$ then set $a_j=A$.  This choice ensures that $a_j\leq A(2+y^2)$ for $y\in [y_j, y_{j+1}]$, the term $2$ taking care of the values close to $0$.

This gives that Equation~\eqref{eqn: calc supercritical} is bounded by
\[
\begin{aligned}
&\leq A e^{\beta m(t)-(1+\theta)t} \int_{-\infty}^{A} (2+y^2)e^{(\beta-\beta_c)y}e^{y\tfrac{(\log t)^2}{4\beta_c^2 t}}\rd y\\
&\leq \Big(\tfrac{2A}{(\beta-\beta_c)^3}+\tfrac{A(A^2+2)}{\beta-\beta_c}\Big)e^{(\beta-\beta_c+1)A}\cdot e^{\beta m(t)-(1+\theta)t}
\end{aligned}
\]
since $e^{y\tfrac{(\log t)^2}{4\beta_c^2 t}}\leq e^A$, and by direct integration of $(2+y^2)e^{(\beta-\beta_c)y}$. 
The probability of $G^c$ is then
$$
\PP(G^c)=e^{-\beta_c A}+\sum_{j}a_j^{-1}\ll \frac{1}{A}.
$$
This proves the corollary in the case $\beta>\beta_c$. 

\begin{rem}[Case $\beta=\beta_c$]
{\rm
Since it is possible to improve the bound \eqref{eqn: subcritical} in the range $\beta>\beta_c$, one might hope to do the same at $\beta=\beta_c$. 
This is possible in the case $\theta=0$, as discussed in the next section, but it is not expected to be possible for $\theta>0$.
Indeed, in this range of $\theta$, the above proof should be optimal. In fact, Equation \eqref{eqn: supercritical} would become (dropping the $a_j$'s for simplicity)
\begin{equation}
\label{eqn: calc critical}
\begin{aligned}
&e^{\beta_c m(t)} \int_{-\infty}^{A} e^{\beta y}\frac{e^{-m(t)^2/t}}{\sqrt{t}}e^{-\frac{2m(t)y}{t}}e^{-y^2/t}\rd t
\leq e^A\cdot e^{\frac{\beta_c^2}{4}t}  \int_{-\infty}^{A} \frac{e^{-y^2/t}}{\sqrt{t}}\rd y.
\end{aligned}
\end{equation}
This is because $e^{-(1+\theta)t}=e^{-\frac{\beta_c^2}{4}t}$ and $t^{-\frac{\beta}{2\beta_c}}=t^{-1/2}$.
The integral is now finite, so one recovers the bound \eqref{eqn: subcritical} up to a factor of order one.
}
\end{rem}

\section{Relation to Theorem \ref{thm: Harper} for $\theta=0$}
\label{sect: critical}
We briefly explain an alternative approach to proving a sharp upper bound to the $\beta_c$-moment in the case $\theta=0$.
It is based on the measure of the level sets in the spirit of the proof of Corollary \ref{cor: subcritical}.

The deterministic level of the maximum is now by Equation \eqref{eqn: FHK}
\[
m(t)=t-\frac{3}{4}\log t=\frac{\beta_c}{2}t-\frac{3}{2\beta c}\log t.
\]
There is a factor $3$ in the logarithmic correction and not $1$ as in \eqref{eqn: m}.
The important observation is that the typical measure of the level sets $\mathcal S(m(t)+y)$ is no longer $e^{-t}e^{-2y}e^{-y^2/t}$ as for the case $\theta>0$.
In fact, the proof of \eqref{eqn: FHK} in \cite{argbourad20} also shows that 
\begin{equation}
\label{eqn: S critical}
\mathcal S(m(t)+y)\leq A e^{-t}\cdot  |y| e^{-2y}e^{-y^2/2t}, \quad |y|=\oo(t),
\end{equation}
except on an event of probability $A$.
This is what is expected from the study of the extreme values of log-correlated processes, see for example Theorem 1.1 and Lemma 4.2 in \cite{corharlou19}. We explain how the additional $y$ in the decay is responsible for the extra $1/\sqrt{t}$ factor in the size of the moment. 
The integral \eqref{eqn: Z reduction 2} with $\beta=\beta_c=2$ becomes
\begin{equation}
  \label{eqn: Z reduction critical}
  \begin{aligned}
    \int_{ t/2}^{m(t)+A} e^{2 V} \mathcal S(V) \rd V
    &\leq A\frac{e^{2t}}{t^{3/2}}\int_{-t/2 +\tfrac{3}{4}\log t}^Ae^{-t}|y|e^{-y^2/t}\rd y\\
    &=A\frac{e^{t}}{t^{1/2}}\int_{-t/2 +\tfrac{3}{4}\log t}^A\frac{|y|}{\sqrt{t}}\frac{e^{-y^2/t}}{\sqrt{t}}\rd y\\
    &=A\frac{e^{t}}{t^{1/2}}\int_{-\sqrt{t}/2 +\oo(1)}^{A/\sqrt{t}}|u|e^{-u^2}\rd u.
  \end{aligned}
\end{equation}
The last integral is of order one. 
At criticality, there is now an extra factor $1/\sqrt{t}$ coming from $t^{3/2}$ that is left, thereby giving the overall magnitude of $\frac{e^{t}}{t^{1/2}}$ for the moment.
It is also important to observe that, because of the $\sqrt{t}$-normalization in the integral, it is not necessary to know the level of the maximum up to order one as in Equation \eqref{eqn: FHK}.

\appendix

\section{Appendix}
The appendix gathers known results on moments of Dirichlet polynomials and probability estimates of random models.
\subsection{Moments of Dirichlet Polynomials}
\begin{lem}\label{lem: Gaussian moments cplx}
  Let $\widetilde{S}_j$ as in Equation \eqref{eqn: S tilde}.
  For any integers $t/2 \leq j \leq k$ and $2 q \leq e^{t - k}$, we have 
  \[
  \mathbb{E}[|\widetilde{S}_k - \widetilde{S}_j|^{2q}] \ll q! (k - j + 1)^{q}.
  \]
\end{lem}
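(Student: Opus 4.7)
\medskip

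\textbf{Proof plan for Lemma \ref{lem: Gaussian moments cplx}.}
My plan is to expand $|\widetilde S_k - \widetilde S_j|^{2q}$ into a Dirichlet polynomial, apply the mean-value theorem for Dirichlet polynomials to reduce to a diagonal sum, and then recognize the diagonal as a $q!$-th Gaussian-type combinatorial quantity that is controlled by Mertens' estimate.

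First, I would write the difference as
\[
\widetilde S_k - \widetilde S_j = \sum_{\exp(e^j)<p\leq \exp(e^k)} \frac{p^{-\ii\tau}}{p^{1/2}} + \sum_{\exp(e^j)<p\leq \exp(e^k)} \frac{p^{-2\ii\tau}}{2p}=: L+R.
\]
The remainder $R$ satisfies $\mathbb{E}[|R|^{2q}] = O((k-j+1)^q)$ essentially trivially since $\sum 1/p^2 = O(1)$, so using the inequality $|L+R|^{2q}\leq 2^{2q}(|L|^{2q}+|R|^{2q})$ I would reduce to showing the bound for $L$. Absorbing the constant into the Vinogradov notation is harmless.

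The polynomial $L^q$ expands into $\sum_{\mathbf{p}} a_\mathbf{p}\, (p_1\cdots p_q)^{-\ii\tau}$, a Dirichlet polynomial supported on integers $n\leq \exp(q e^k)$. The coefficients collect to $c_n = \sum_{p_1\cdots p_q = n} \prod_i p_i^{-1/2}$. Taking the modulus squared,
\[
|L^q|^2 = \sum_{n,m} c_n \overline{c_m}\, (m/n)^{\ii\tau}.
\]
The condition $2q\leq e^{t-k}$, i.e.\ $2q e^k \leq \log T$, ensures that both $n$ and $m$ have size at most $\exp(q e^k)\leq T^{1/2}$, so the standard mean-value theorem for Dirichlet polynomials (Lemma~\ref{lem: mean value DP}) applies and gives
\[
\mathbb{E}\bigl[|L^q|^2\bigr] = (1+\oo(1))\sum_n |c_n|^2.
\]

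Next I would compute the diagonal. For a multiset of primes with distinct entries $q_1,\ldots,q_s$ of multiplicities $r_1,\ldots,r_s$ (with $\sum r_i = q$), the coefficient $c_n$ with $n=\prod q_i^{r_i}$ equals $\tfrac{q!}{\prod r_i!}\prod q_i^{-r_i/2}$. Hence
\[
\sum_n |c_n|^2 = \sum_{r_1+\cdots+r_s=q} \Bigl(\tfrac{q!}{\prod r_i!}\Bigr)^2 \sum_{q_1<\cdots<q_s}\prod_i q_i^{-r_i} \leq q!\,\Bigl(\sum_{\exp(e^j)<p\leq \exp(e^k)} \tfrac{1}{p}\Bigr)^q,
\]
where the inequality comes from $\tfrac{q!}{\prod r_i!}\leq q!/(\text{stuff})$ and collecting terms multinomially; more cleanly, one recognizes this as exactly the expectation $\mathbb{E}[|Z|^{2q}] = q!\sigma^{2q}$ of a centered complex Gaussian $Z$ with $\sigma^2 = \sum 1/p$ (the standard random-model calculation, cf.~the independent-prime heuristic used e.g.\ in Lemma \ref{lem: model MGF}). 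Mertens' theorem yields $\sum_{\exp(e^j)<p\leq\exp(e^k)} 1/p = (k-j)+O(1)\ll k-j+1$, completing the bound.

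The main obstacle will be the combinatorial step: one must check that the multinomial factors $(q!/\prod r_i!)^2$ combine correctly to produce $q!$ (not $(q!)^2$) after the sum over unordered multisets is re-indexed as an ordered sum, which is exactly the complex-Gaussian Wick identity. I expect this to go through cleanly by the same manipulation as in \cite[Lemma 3.4]{abbrs19}; the prime-squared correction $R$ contributes lower-order and the off-diagonal error from the mean-value theorem is negligible due to the length constraint $2qe^k\leq \log T$.
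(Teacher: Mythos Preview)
Your approach is the standard one --- indeed the paper's proof is simply a citation to \cite[Lemma~3]{sou09}, which is precisely the mean-value-plus-diagonal-combinatorics argument you sketch for the prime part $L$. There is, however, a genuine slip in how you dispose of the prime-square part: the factor $2^{2q}$ coming from $|L+R|^{2q}\leq 2^{2q}(|L|^{2q}+|R|^{2q})$ depends on $q$ and therefore \emph{cannot} be absorbed into the implied constant. As written your argument yields only $\ll q!\,\bigl(4(k-j+1)\bigr)^q$, which is strictly weaker than the stated bound and would cost a constant factor in the exponent of the Gaussian tail~\eqref{eqn: gaussian tail cplx}.

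The repair is short. Run your own argument on $R=\sum_p (2p)^{-1}(p^2)^{-\ii\tau}$ (a genuine prime sum in the variable $p^2$, so the diagonal combinatorics are identical) to get $\mathbb{E}[|R|^{2q}]\leq (1+o(1))\,q!\,\sigma_R^{2q}$ with $\sigma_R^2=\sum_{p>\exp(e^j)} (2p)^{-2}\ll e^{-e^{j}}$, and likewise $\mathbb{E}[|L|^{2q}]\leq (1+o(1))\,q!\,\sigma_L^{2q}$ with $\sigma_L^2=k-j+O(e^{-e^j})$. Now combine via Minkowski rather than the crude pointwise inequality: because the constants above are $1+o(1)$ (the mean-value error is $O(T^{-1/2})$ and $2q\leq e^{t}$, so $(1+O(T^{-1/2}))^{1/(2q)}$ and its $2q$-th power are both $1+o(1)$), one obtains
\[
\mathbb{E}[|\widetilde S_k-\widetilde S_j|^{2q}]\leq \|L+R\|_{2q}^{2q}\leq (1+o(1))\,q!\,(\sigma_L+\sigma_R)^{2q}.
\]
Finally, the hypotheses $j\geq t/2$ and $2q\leq e^{t-k}$ give $2q\cdot \sigma_R/\sigma_L\ll e^{t}\cdot e^{-e^{t/2}/2}=o(1)$, so $(\sigma_L+\sigma_R)^{2q}=(1+o(1))\sigma_L^{2q}\leq (k-j+1)^q$, and the stated bound follows.
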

\begin{proof}
  This is the content of \cite[Lemma 3]{sou09}.
\end{proof}
With the choice $q=\lceil \frac{V^2}{k-j+1}\rceil$, Markov's inequality, Lemma \ref{lem: Gaussian moments cplx} and Stirling's formula imply
\begin{equation}\label{eqn: gaussian tail cplx}
  \mathbb{P} \Big ( |\widetilde{S}_k - \widetilde{S}_j| > V \Big ) \ll \frac{V + 1}{(k - j )^{1/2}} \exp \Big ( - \frac{V^2}{k - j + 1} \Big ). 
\end{equation}

Lemma 16 of~\cite{argbourad20}] gives a more precise estimate for the moments of the real part $S_j$.
\begin{lem}\label{lem: Gaussian moments real}
  For any integers $t/2 \leq j < k$ and $2q \leq e^{t-k}$ we have
  \[
  \mathbb{E}[|S_k - S_j|^{2q}] \ll \frac{(2q)!}{2^q q!} \left(\frac{k - j}{2}\right)^{q}.
  \]
  Moreover, there exists $C>0$ such that for any $j< k$, and $2q\leq e^{t-k}$ such that
  \begin{equation}\label{eqn: moment bound sqrt}
    \mathbb{E}[|S_k - S_j|^{2q}] \ll \sqrt{q}\frac{(2q)!}{2^q q!} \left(\frac{k - j+C}{2}\right)^{q}.
  \end{equation}
\end{lem}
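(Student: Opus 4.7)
\medskip

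\noindent\textbf{Proof proposal for Lemma \ref{lem: Gaussian moments real}.}

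The first bound is exactly \cite[Lemma 16]{argbourad20} under its hypothesis $t/2\leq j<k$, so there is nothing new to do there. The second bound \eqref{eqn: moment bound sqrt} is a mild relaxation that drops the constraint $j\geq t/2$; my plan is to follow the same expansion-and-orthogonality scheme as in \cite{argbourad20} and isolate the place where the assumption $j\geq t/2$ was used, then reabsorb the resulting loss into the additive constant $C$ and the multiplicative $\sqrt{q}$.

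The starting point is to write $S_k-S_j=\tfrac{1}{2}(\widetilde S_k-\widetilde S_j)+\tfrac{1}{2}\overline{(\widetilde S_k-\widetilde S_j)}$ and expand $|S_k-S_j|^{2q}$ as a multiple sum over $2q$ primes $p_1,\dots,p_{2q}\in(\exp(e^j),\exp(e^k)]$ with choices of signs $\varepsilon_i\in\{\pm 1,\pm 2\}$ corresponding to the linear and square-of-prime contributions in \eqref{eqn: S}. Taking the expectation over $\tau$ uniform on $[T,2T]$, a term indexed by $(p_i,\varepsilon_i)$ survives up to $\OO(1/T)$ only when $\prod_i p_i^{\varepsilon_i}=1$, which is the standard orthogonality input; the assumption $2q\le e^{t-k}$ ensures the integer side is much smaller than $T$ and so the off-diagonal contribution is negligible.

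The surviving diagonal terms are indexed by perfect pairings of the $2q$ indices (possibly with some indices in the square-of-prime part pairing with two linear indices, which contribute a lower-order term). Counting the matchings gives the combinatorial factor $\tfrac{(2q)!}{2^q q!}$, and each pairing contributes a power of the variance-like quantity
\[
\sigma_{j,k}^2=\tfrac{1}{2}\!\!\sum_{\exp(e^j)<p\le\exp(e^k)}\!\!\tfrac{1}{p}+\OO(1).
\]
By Mertens' theorem, $\sigma_{j,k}^2=\tfrac{k-j}{2}+\OO(1)$. When $j\ge t/2$ the error is $\oo(1)$ (this is where the original hypothesis is used), giving the clean $\bigl(\tfrac{k-j}{2}\bigr)^q$; dropping the hypothesis costs an additive constant, yielding $\bigl(\tfrac{k-j+C}{2}\bigr)^q$ for a universal $C>0$.

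The main obstacle—and the source of the extra $\sqrt{q}$—is tracking the lower-order contributions from (i) the square-of-prime terms $\tfrac{\re p^{-2i\tau}}{2p}$, which contribute pairings of degree different from $2$ in a given prime, and (ii) the $\OO(1)$ error in the variance sum when interacted with $q$ copies in the binomial-like expansion $(\tfrac{k-j}{2}+\OO(1))^q$. Each of these yields at worst a polynomial-in-$q$ inflation of the bound; a careful bookkeeping (identical in spirit to the counting in \cite{argbourad20}) shows that $\sqrt{q}$ is enough to dominate all of them uniformly in $j<k$. Once this bookkeeping is done, combining with the diagonal count gives exactly the claimed estimate, and no further range restriction on $j$ is needed.
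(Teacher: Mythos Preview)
The paper gives no proof of this lemma at all: it simply attributes the statement to \cite[Lemma 16]{argbourad20}. So there is nothing in the paper to compare your argument against, and your decision to defer the first inequality to that reference is exactly what the paper does (and, as far as one can tell, the paper intends the second inequality to be covered by the same citation).

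That said, your sketch for \eqref{eqn: moment bound sqrt} has a real gap in the place you flag as ``the main obstacle.'' Both of the mechanisms you name --- (i) the square-of-prime summands and (ii) the $\OO(1)$ Mertens error in the variance --- are $\OO(1)$ perturbations of the effective variance $\sigma_{j,k}^2$ and are therefore absorbed entirely by the additive $C$ in $\bigl(\tfrac{k-j+C}{2}\bigr)^{q}$; neither one produces a multiplicative factor polynomial in $q$. If a genuine $q$-dependent loss enters when the hypothesis $j\ge t/2$ is dropped, it is through the diagonal configurations in which a \emph{small} prime from the \emph{linear} part $p^{-1/2}\re p^{-\ii\tau}$ appears with multiplicity $\ge 4$: these are killed when $j\ge t/2$ because $\sum_{p>\exp(e^{t/2})}p^{-2}$ is doubly-exponentially small, but survive otherwise. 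Your write-up never mentions this contribution, and ``a careful bookkeeping \ldots\ shows that $\sqrt{q}$ is enough'' is an assertion, not an argument.

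A cleaner way to close the gap is to bypass the bookkeeping entirely. After reducing to the random model (diagonal) as you describe, the moment is exactly
\[
(2q)!\,[\lambda^{q}]\prod_{p}\sum_{m\ge 0}\frac{1}{(m!)^{2}}\Bigl(\frac{\lambda}{4p}\Bigr)^{m},
\]
and the coefficient-wise inequality $\sum_{m}y^{m}/(m!)^{2}\le\sum_{m}y^{m}/m!$ immediately gives
\[
\E[|S_k-S_j|^{2q}]\le\frac{(2q)!}{q!}\Bigl(\frac{1}{4}\sum_{p}\frac{1}{p}\Bigr)^{q}
\ll\frac{(2q)!}{2^{q}q!}\Bigl(\frac{k-j+C}{2}\Bigr)^{q},
\]
valid for all $j<k$ with $2q\le e^{t-k}$; the square-of-prime summands only shift $C$. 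This already implies \eqref{eqn: moment bound sqrt} (indeed without the $\sqrt{q}$), and it makes transparent exactly where each of your items (i) and (ii) goes.
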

As in Equation \eqref{eqn: gaussian tail cplx}, one gets a Gaussian decay from Lemma~\ref{lem: Gaussian moments real} for the choice $q=\lceil \frac{V^2}{2(k-j+1)}\rceil$
\begin{equation}\label{eqn: gaussian tail real}
  \PP\left(|S_k-S_j|> V\right)\ll e^{-\frac{V^2}{k-j}},
\end{equation}
when $j>t/2$ and $V^2\leq \frac{k-j}{2}e^{t-k}$. 

We now explain the link between Dirichlet polynomials and the random model \eqref{eqn: random model}.
We consider the following general setup.
Let $(\theta_p, p \text{ prime})$ be a sequence of IID random variables, uniformly distributed on $[0,2\pi]$. 
For an integer $n$ with prime factorization $n = p_1^{\alpha_1} \ldots p_k^{\alpha_k}$ with $p_1, \ldots, p_k$ all distinct, define the random variable
\[
Z_n = \prod_{j = 1}^{k} \exp(\ii \alpha_j\theta_{p_j}). 
\]
By construction, we have the orthogonality relation $\mathbb{E}[Z_n \overline{Z}_m] = \mathbf{1}_{n = m}$. Therefore, for an arbitrary sequence $a(n)$ of complex numbers, the following holds
\[
\sum_{n \leq N} |a(n)|^2 = \mathbb{E} \Big [ \Big | \sum_{n \leq N} a(n) Z_n \Big |^2 \Big ]. 
\]
The expectation for the random variable is directly related to the mean-value of the square of Dirichlet polynomial, see \cite[Corollary 3]{MonVau07}. 

\begin{lem}
 \label{lem: mean value DP}
  We have
  \[
  \mathbb{E} \Big [ \Big | \sum_{n \leq N} a(n) n^{\ii \tau} \Big |^2 \Big ]  = \Big (1 + \OO \Big ( \frac{N}{T} \Big ) \Big ) \sum_{n \leq N} |a(n)|^2  = \Big ( 1 + \OO \Big ( \frac{N}{T} \Big ) \Big ) \mathbb{E} \Big [ \Big | \sum_{n \leq N} a(n) Z_n \Big |^2 \Big ].
  \]
\end{lem}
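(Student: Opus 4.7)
The plan is to prove both equalities by expanding the squared modulus and handling the diagonal and off–diagonal contributions separately, which is the standard route to the Montgomery–Vaughan mean–value theorem. Writing
\[
\Big|\sum_{n\leq N} a(n) n^{\ii\tau}\Big|^2 = \sum_{n\leq N}|a(n)|^2 + \sum_{\substack{n,m\leq N \\ n\neq m}} a(n)\overline{a(m)}\,(n/m)^{\ii\tau},
\]
I would first average the diagonal term, which is $\tau$-free and therefore contributes exactly $\sum_{n\leq N}|a(n)|^2$ after integrating $\tau$ uniformly over $[T,2T]$. This already produces the main term on both sides of the lemma.

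For the off–diagonal contribution, the key elementary estimate is
\[
\frac{1}{T}\int_T^{2T}(n/m)^{\ii\tau}\,\rd\tau \ll \min\!\Big(1,\frac{1}{T|\log(n/m)|}\Big),
\]
obtained by direct integration. A termwise application of this bound would only give an $N\log N/T$ error, so the next step is to invoke the Hilbert-type inequality of Montgomery and Vaughan (this is the content of \cite[Corollary 3]{MonVau07}), which yields
\[
\sum_{\substack{n,m\leq N\\ n\neq m}}\frac{a(n)\overline{a(m)}}{T\log(n/m)} \ll \frac{N}{T}\sum_{n\leq N}|a(n)|^2.
\]
Combining this with the diagonal computation gives the first equality in the statement, with the sharp multiplicative error $1+\OO(N/T)$.

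For the second equality, I would compute $\E[|\sum_{n\leq N} a(n) Z_n|^2]$ directly from the definition of $Z_n$. Expanding the square and using independence of $(\theta_p)_p$, the expectation factorises over primes, and for each prime $p$ one has $\frac{1}{2\pi}\int_0^{2\pi} e^{\ii k\theta_p}\rd\theta_p = \mathbf{1}_{k=0}$, which forces the exponents of every prime in $n$ and $m$ to agree. This yields the orthogonality $\E[Z_n\overline{Z_m}]=\mathbf{1}_{n=m}$, hence $\E[|\sum a(n) Z_n|^2]=\sum_{n\leq N}|a(n)|^2$ exactly, completing the chain of equalities.

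The only genuinely non-trivial step is extracting the sharp $\OO(N/T)$ error rather than a $\OO(N(\log N)/T)$ bound; this is precisely where the Montgomery–Vaughan refinement enters, and it is the main (indeed only) obstacle. Everything else — the expansion, the diagonal identification, and the random-model orthogonality — is routine algebra and an elementary integration.
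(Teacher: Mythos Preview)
Your proposal is correct and in fact goes further than the paper: the paper does not prove this lemma at all but simply records it as the Montgomery--Vaughan mean-value theorem with a citation to \cite[Corollary 3]{MonVau07}, together with the observation (stated just above the lemma) that $\E[Z_n\overline{Z_m}]=\mathbf{1}_{n=m}$. Your sketch is the standard proof of that cited result, so there is nothing to compare.
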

  
A direct consequence of Lemma \ref{lem: mean value DP} is the splitting of the expectation for Dirichlet polynomials involving different range of primes, see for example \cite[Lemma 14]{argbourad20}.
\begin{lem} \label{lem: splitting}
  Let
  \[
  A(s) = \sum_{\substack{n \leq N \\ p | n \implies p \leq w}} \frac{a(n)}{n^s} \text{ and } B(s) = \sum_{\substack{n \leq N \\ p | n \implies p > w}} \frac{b(n)}{n^s}
  \]
  be two Dirichlet polynomials with $N \leq T^{1/4}$. Then, we have
  \[
  \mathbb{E}[|A(\tfrac 12 + \ii\tau)|^2 \, |B(\tfrac 12 + \ii\tau)|^2 ] =(1+\OO(T^{-1/2}))\mathbb{E}[|A(\tfrac 12 + \ii\tau)|^2 ] \, \mathbb{E}[|B(\tfrac 12 + \ii\tau)|^2].
  \]
\end{lem}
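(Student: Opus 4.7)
The plan is to reduce the two-polynomial estimate to a single application of Lemma~\ref{lem: mean value DP}, by forming the product polynomial $C(s) := A(s)B(s)$. Since the complex modulus is multiplicative,
\[
|A(\tfrac12+\ii\tau)|^2 \, |B(\tfrac12+\ii\tau)|^2 = |C(\tfrac12+\ii\tau)|^2,
\]
so the left-hand side is simply the second moment of a single Dirichlet polynomial $C$ of length at most $N^2 \leq T^{1/2}$.

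The key structural observation concerns the coefficients of $C$. Expanding the product gives $C(s) = \sum_k c(k)k^{-s}$ with $c(k) = \sum_{mn=k} a(m)b(n)$, the sum running over factorizations with $m$ supported on integers having all prime factors $\leq w$ and $n$ on those $>w$. By unique factorization in $\Z$, each such $k$ admits at most one decomposition of this type, namely $m$ is the product of those prime powers of $k$ with prime $\leq w$ and $n$ is the complementary factor. Hence $c(mn)=a(m)b(n)$ and
\[
\sum_k |c(k)|^2 \;=\; \Big(\sum_m |a(m)|^2\Big)\Big(\sum_n |b(n)|^2\Big).
\]
This is really the only substantive step.

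To conclude, I would apply Lemma~\ref{lem: mean value DP} to $C$, yielding $\E[|C(\tfrac12+\ii\tau)|^2]=(1+\OO(N^2/T))\sum_k|c(k)|^2 = (1+\OO(T^{-1/2}))\sum_k|c(k)|^2$. Applying the same lemma in reverse to $A$ and to $B$ separately, each with error $1+\OO(N/T) = 1+\OO(T^{-3/4})$, rewrites the right-hand side as $(1+\OO(T^{-3/4}))\,\E[|A(\tfrac12+\ii\tau)|^2]\,\E[|B(\tfrac12+\ii\tau)|^2]$, giving the claim with combined error $\OO(T^{-1/2})$. There is no real obstacle beyond the coefficient bookkeeping above: the hypothesis $N \leq T^{1/4}$ exists precisely so that $C$ still has length $\leq T^{1/2}$, which keeps Lemma~\ref{lem: mean value DP} applicable to the product with an error that remains stronger than the error generated by the single-polynomial applications to $A$ and $B$.
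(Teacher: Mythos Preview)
Your proposal is correct and follows exactly the approach the paper indicates: the lemma is stated as a direct consequence of Lemma~\ref{lem: mean value DP}, and your reduction via the product polynomial $C=AB$, together with the unique-factorization observation that $\sum_k |c(k)|^2/k$ splits as a product, is precisely how that deduction goes. One minor bookkeeping slip: since Lemma~\ref{lem: mean value DP} is stated for $\sum_n a(n)n^{\ii\tau}$, applying it to $C(\tfrac12+\ii\tau)$ gives $(1+\OO(N^2/T))\sum_k |c(k)|^2/k$ rather than $\sum_k|c(k)|^2$, but the same weight appears for $A$ and $B$, so the factorization and conclusion are unaffected.
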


\subsection{Estimates for the random model}
Recall the definition of the random model in Equation \eqref{eqn: random model}.
\begin{equation}
  \mathcal Y_j=\sum_{e^{t_{j-1}}<\log p \leq e^{t_j}} \frac{\cos \theta_p}{p^{1/2}}+ \frac{\cos^2 \theta_p}{2p}.
\end{equation}
The moment generating function is easily estimated using the independence between the $\theta_p$'s.
\begin{lem}
  \label{lem: model MGF}
  For $\lambda<\exp(\frac{1}{2}e^{t_j})$, we have
  \[
  \E[\exp(\lambda \mathcal Y_j)]\ll \exp\Big(\frac{\lambda^2}{4}(t_{j}-t_{j-1})\Big).
  \]
\end{lem}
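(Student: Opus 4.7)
The claim is a routine moment-generating-function computation, made possible by the independence of the phases $(\theta_p)$. By independence, the MGF factorises:
\[
\E[e^{\lambda \mathcal Y_j}] = \prod_{e^{t_{j-1}} < \log p \le e^{t_j}} \E\!\left[\exp\!\left(\lambda\frac{\cos\theta_p}{\sqrt p} + \lambda\frac{\cos^2\theta_p}{2p}\right)\right],
\]
so the plan is to estimate a single factor and then sum the logarithms via Mertens' theorem.

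For the per-prime estimate, the key tool is the modified Bessel function identity $\E[e^{u\cos\theta}] = I_0(u)$, combined with the elementary inequality
\[
I_0(u) = \sum_{k \ge 0}\frac{(u/2)^{2k}}{(k!)^2} \le \sum_{k \ge 0}\frac{u^{2k}}{k!\,4^k} = e^{u^2/4},
\]
valid for every real $u$, with no smallness assumption. To handle the $\cos^2\theta_p$ contribution cleanly, I would use $\cos^2\theta = \tfrac12 + \tfrac12\cos(2\theta)$ to pull out the deterministic factor $e^{\lambda/(4p)}$, and then bound the remaining $\E[e^{\lambda\cos\theta_p/\sqrt p + \lambda\cos(2\theta_p)/(4p)}]$ by $e^{|\lambda|/(4p)}\cdot I_0(\lambda/\sqrt p) \le e^{|\lambda|/(4p) + \lambda^2/(4p)}$, trivially using $|\cos(2\theta)|\le 1$ inside the exponent. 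This yields the per-prime estimate
\[
\E\!\left[\exp\!\left(\lambda\frac{\cos\theta_p}{\sqrt p} + \lambda\frac{\cos^2\theta_p}{2p}\right)\right] \le \exp\!\left(\frac{\lambda^2}{4p} + O\!\left(\frac{|\lambda|}{p}\right)\right).
\]

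Taking logs and summing over primes with $\log\log p \in (t_{j-1}, t_j]$, Mertens' theorem gives $\sum_p 1/p = t_j - t_{j-1} + O(1)$, producing the quadratic contribution $\tfrac{\lambda^2}{4}(t_j - t_{j-1})$ as claimed. The linear-in-$\lambda$ remainder and the $O(1)$ Mertens error are absorbed into the implicit constant of the $\ll$ notation, which is consistent with how the lemma is used in the main text (with $\lambda$ at least of constant size).

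The main subtlety lies in the wide range $\lambda < \exp(\tfrac12 e^{t_j})$, which permits $\lambda/\sqrt p$ to be as large as $\exp(\tfrac12(e^{t_j} - e^{t_{j-1}}))$ on the smallest primes in the range and therefore rules out any naive Taylor expansion of $e^{\lambda\cos\theta_p/\sqrt p}$ to second order. The global Bessel bound $I_0(u) \le e^{u^2/4}$ valid for all $u\in\R$ is precisely the tool that saves the day: it delivers the correct Gaussian behaviour uniformly in $u$ and makes the per-prime estimate work for every prime in the range without any case analysis.
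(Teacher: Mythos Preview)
Your overall strategy—factorise by independence, control each factor via the global Bessel inequality $I_0(u)\le e^{u^2/4}$, and sum using Mertens—is the standard one (the paper itself gives no argument, only a citation to \cite{argbourad20}).  The gap is in your final sentence, where you assert that ``the linear-in-$\lambda$ remainder and the $O(1)$ Mertens error are absorbed into the implicit constant of the $\ll$.''  After summing, your bound is
\[
\log\E[e^{\lambda\mathcal Y_j}]\;\le\;\frac{\lambda^2}{4}\sum_p\frac1p+O(|\lambda|)\sum_p\frac1p
\;=\;\frac{\lambda^2}{4}(t_j-t_{j-1})\;+\;O(\lambda^2)\;+\;O\big(|\lambda|(t_j-t_{j-1})\big),
\]
the $O(\lambda^2)$ coming from the $O(1)$ in Mertens.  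The $\ll$ in the lemma is \emph{multiplicative}, so an error is absorbable only if it is $O(1)$ \emph{in the exponent}.  Neither $O(\lambda^2)$ nor $O(|\lambda|(t_j-t_{j-1}))$ is bounded on the stated range $\lambda<\exp(\tfrac12 e^{t_j})$, and your parenthetical that $\lambda$ is ``at least of constant size'' addresses the wrong end of the range: the problem is $\lambda$ large, not small.

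What you have actually proved is $\E[e^{\lambda\mathcal Y_j}]\le\exp\big(C\lambda^2(t_j-t_{j-1})\big)$ for some $C>\tfrac14$ once $|\lambda|\ge 1$, which is in fact all the paper ever uses (the only invocation, at Equation~\eqref{eqn: CS}, quotes the bound as $\exp(\lambda^2\Delta_j)$ with no factor $\tfrac14$).  So your argument suffices for the applications in the paper, but not for the lemma exactly as stated.  Recovering the sharp constant $\tfrac14$ uniformly would require either a tighter range on $\lambda$ or exploiting the strict negativity of $\log I_0(u)-u^2/4$ for large $u$ to cancel the Mertens surplus, together with a second-order (rather than $|\cos|\le 1$) treatment of the square-of-primes term.
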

\begin{proof}
  See for example \cite[Lemma 15]{argbourad20}.
\end{proof}
The comparison between the random model and the Gaussian model can be made more precise at the level of the probabilities. 
A version was proved in \cite[Proposition 2.11]{argbelhar17} using a Berry-Esseen estimate. See also \cite[Lemma 20]{argbourad20}.
\begin{lem}\label{lem: ABH}
  For $j\geq 2$, let $\mathcal N_j$ be a Gaussian random variable of mean $0$ and variance $\frac{1}{2}(t_j-t_{j-1})$.
  There exists a constant $c>0$ such that, for any interval $A$ and $j\geq 2$,
  \[
    {\mathbb P}\Big({\mathcal Y}_j \in A\Big)
    =\mathbb P\Big(\mathcal N_j\in A\Big)+\OO(e^{-c e^{j/2}}).
    \]
\end{lem}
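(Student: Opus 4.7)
The plan is to apply Esseen's smoothing inequality to the characteristic functions of $\mathcal{Y}_j$ and $\mathcal{N}_j$, exploiting the fact that the primes in the range $\log p\in(e^{t_{j-1}},e^{t_j}]$ are enormous, so that each summand $\cos\theta_p/\sqrt{p}+\cos^2\theta_p/(2p)$ has tiny amplitude $\ll 1/\sqrt{p}$ and the characteristic function admits a very accurate Gaussian approximation on a wide Fourier range.

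First, by independence of the $\theta_p$'s, factor
\[
\phi_j(\xi):=\E[e^{\ii\xi\mathcal{Y}_j}]=\prod_{\substack{p\\\log p\in(e^{t_{j-1}},e^{t_j}]}}\E\left[\exp\left(\ii\xi\,\tfrac{\cos\theta_p}{\sqrt{p}}+\ii\xi\,\tfrac{\cos^2\theta_p}{2p}\right)\right].
\]
Taylor-expanding each exponential, using the elementary moments $\E[\cos^{2k}\theta_p]=\binom{2k}{k}/4^k$, and taking logarithms, one obtains for $|\xi|\le\sqrt{p}$
\[
\log\E\left[\exp\left(\ii\xi\,\tfrac{\cos\theta_p}{\sqrt{p}}+\ii\xi\,\tfrac{\cos^2\theta_p}{2p}\right)\right]=-\frac{\xi^2}{4p}+\OO\!\left(\frac{\xi^3}{p^{3/2}}+\frac{\xi^2}{p^2}\right).
\]
Summing over the dyadic prime range and invoking Mertens' theorem ($\sum_{\log p\in(e^{t_{j-1}},e^{t_j}]}1/p=(t_j-t_{j-1})+o(1)$) yields $\log\phi_j(\xi)=-\tfrac12\sigma_j^2\,\xi^2+E_j(\xi)$ with $\sigma_j^2=\tfrac12(t_j-t_{j-1})+o(1)$ matching the variance of $\mathcal{N}_j$, and $E_j(\xi)$ negligibly small on a large range of $\xi$.

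Next, apply Esseen's smoothing inequality (using it for each of the two half-lines bounding $A$ and subtracting) to obtain, with $\psi_j(\xi)=e^{-\xi^2\sigma_j^2/2}$ the characteristic function of $\mathcal{N}_j$,
\[
\sup_A\bigl|\PP(\mathcal{Y}_j\in A)-\PP(\mathcal{N}_j\in A)\bigr|\ll\int_{-T_*}^{T_*}\frac{|\phi_j(\xi)-\psi_j(\xi)|}{|\xi|}\,\rd\xi+\frac{1}{T_*}.
\]
Choose the cutoff $T_*$ so that $1/T_*$ matches the target error $e^{-c e^{j/2}}$. On the inner part $|\xi|\le T_*$, the preceding expansion shows $\phi_j(\xi)-\psi_j(\xi)$ is dominated by $\psi_j(\xi)\cdot |E_j(\xi)|$, which is exponentially tiny thanks to the Gaussian factor and to $E_j$ being controlled by a polynomial in $\xi/\sqrt{p_{\min}}$ with $p_{\min}>\exp(e^{t_{j-1}})$. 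The restriction $j\ge2$ ensures $p_{\min}$ is sufficiently large for the expansion to dominate beyond any polynomial $T_*$ of the relevant scale.

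The main obstacle is controlling $\phi_j(\xi)$ at the top of the Fourier range where the Taylor expansion begins to fail. For these $\xi$, the plan is to switch to the Bessel-function representation $\E[e^{\ii\xi\cos\theta_p}]=J_0(\xi/\sqrt{p})$, and use the decay $|J_0(x)|\le\min(1,c/\sqrt{|x|})$ together with the perturbative contribution of the $\cos^2\theta_p/(2p)$ term. Because the prime range contains many primes of comparable size (quantified by Mertens), the resulting product beats any polynomial decay: both $\phi_j(\xi)$ and $\psi_j(\xi)$ become exponentially small well before $|\xi|$ reaches $T_*$, so the contribution of this regime to the Esseen integral is absorbed into the $1/T_*$ boundary term. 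Combining the two regimes gives the stated error.
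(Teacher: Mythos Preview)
Your approach is correct and is exactly what the paper invokes: the paper does not supply its own proof of this lemma but simply cites Proposition~2.11 of Arguin--Belius--Harper and Lemma~20 of Arguin--Bourgade--Radziwi\l\l, describing the argument only as ``a Berry--Esseen estimate.'' Your Esseen smoothing inequality argument is precisely this route, carried out in more detail than the paper itself provides.
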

In the case $j=1$ above, the variable $\mathcal Y_1$ is not asymptotically Gaussian because of the small primes. Nevertheless, the following estimate holds by a saddle-point method \cite[Lemma 18]{argbourad20}.
\begin{lem} \label{le:saddlepoint}
  Let $|v| \leq 100 r$.
  Then, for $r > 1000$ and for all $\Delta \geq 1$, we have
  \[
  \mathbb{P}(\mathcal Y_1 \in [v, v + \Delta^{-1}]) \asymp \frac{1}{\Delta} \cdot \frac{1}{\sqrt{r}} \exp \Big ( - \frac{v^2}{r} \Big ) .
  \]
\end{lem}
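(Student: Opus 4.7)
The approach is a saddle-point/Laplace-inversion analysis of the Laplace transform of $\mathcal Y_1$, chosen precisely because it yields two-sided bounds (matching the $\asymp$ in the statement) and because it absorbs the small-prime non-Gaussianity into a bounded multiplicative factor. Since the $\theta_p$ are independent, the moment generating function factors as
\[
\Phi(\lambda):=\E\bigl[e^{\lambda\mathcal Y_1}\bigr]=\prod_{p\le\exp(e^{t_1})}\E\Bigl[\exp\Bigl(\tfrac{\lambda\cos\theta_p}{\sqrt p}+\tfrac{\lambda\cos^2\theta_p}{2p}\Bigr)\Bigr].
\]
Each factor is an elementary integral in $\theta_p$ (essentially a modified Bessel function). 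For $p$ large, Taylor expansion in $\lambda/\sqrt p$ gives $\log\E[e^{\lambda\cos\theta_p/\sqrt p}]=\tfrac{\lambda^2}{4p}+O(\lambda^4/p^2)$, and summing over $p\le\exp(e^{t_1})$ produces a main term $\tfrac{\lambda^2 r}{4}$, where $r$ matches the natural scale $\sum_{p\le\exp(e^{t_1})}\tfrac{1}{p}$, plus an explicit bounded contribution from the small primes that I would keep in closed form.

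The second step is Laplace inversion. Representing the density of $\mathcal Y_1$ at $v$ as $p_{\mathcal Y_1}(v)=\frac{1}{2\pi\ii}\int_{c-\ii\infty}^{c+\ii\infty}\Phi(\lambda)e^{-\lambda v}\,d\lambda$, I shift the contour through the real saddle $\lambda^\ast=2v/r$ coming from $\partial_\lambda\bigl[\tfrac{\lambda^2 r}{4}-\lambda v\bigr]=0$. The stationary-phase expansion produces the exponential factor $e^{-v^2/r}$ from $\Phi(\lambda^\ast)e^{-\lambda^\ast v}$, together with a transverse Gaussian integral of width $\sqrt{2/r}$ giving the prefactor $r^{-1/2}$. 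Integrating the resulting density over the window $[v,v+\Delta^{-1}]$ multiplies this by $\Delta^{-1}$; the density is essentially constant on such a window because $\Delta^{-1}\le 1\ll\sqrt r$ by the hypothesis $r>1000$, so no further correction arises.

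The main obstacle is the small-prime contribution, which is exactly what prevents $\mathcal Y_1$ from being asymptotically Gaussian and forces the $\asymp$-statement rather than an equivalence. I would handle it by isolating primes $p\le P_0$ for some absolute $P_0$, keeping their exact MGF intact; because $|\lambda^\ast|=2|v|/r\le 200$ under the hypothesis $|v|\le 100 r$, the small-prime factor stays between two positive constants uniformly in $v$, and this distortion can be absorbed into the implicit constants of $\asymp$. A secondary technical point is to bound the tails of the Laplace integral away from $\lambda^\ast$: near the saddle one has quadratic growth of $\operatorname{Re}[\lambda v-\log\Phi(\lambda^\ast+\ii\xi)]$ (Gaussian decay on scale $\sqrt r$), while far from the saddle the decay of the characteristic function follows from the MGF bound in Lemma~\ref{lem: model MGF}. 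Both regimes together give a negligible tail contribution and complete the saddle-point estimate.
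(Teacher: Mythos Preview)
Your approach is correct and matches the paper's own treatment: the paper does not supply a proof here but merely cites \cite[Lemma 18]{argbourad20} and states that the estimate ``holds by a saddle-point method,'' which is exactly the Laplace-inversion argument you outline. One small imprecision: the tail decay of the contour integral along $\lambda^\ast+\ii\xi$ does not follow from Lemma~\ref{lem: model MGF} (which bounds $\Phi$ for real $\lambda$), but rather from the strict inequality $|\Phi(\lambda^\ast+\ii\xi)|<\Phi(\lambda^\ast)$ for $\xi\neq 0$, quantified via the oscillation in each factor $\E[e^{(\lambda^\ast+\ii\xi)\cos\theta_p/\sqrt p}]$; this is straightforward but worth stating correctly.
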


\bibliographystyle{alpha}
\bibliography{local_selberg.bib}

\newcommand{\etalchar}[1]{$^{#1}$}
\begin{thebibliography}{DBMN19}

\bibitem[AAB{\etalchar{+}}21]{aabhr21}
E.~Amzallag, L.-P. Arguin, E.~Bailey, K.~Hui, and R.~Rao.
\newblock Evidence of {R}andom {M}atrix {C}orrections for the {L}arge
  {D}eviations of {S}elberg’s {C}entral {L}imit {T}heorem.
\newblock {\em Experimental Mathematics}, 0(0):1--13, 2021.

\bibitem[ABB{\etalchar{+}}19]{abbrs19}
L.-P. Arguin, D.~Belius, P.~Bourgade, M.~Radziwi\l{\l}, and K.~Soundararajan.
\newblock Maximum of the {R}iemann zeta function on a short interval of the
  critical line.
\newblock {\em Comm. Pure Appl. Math.}, 72(3):500--535, 2019.

\bibitem[ABH17]{argbelhar17}
Louis-Pierre Arguin, David Belius, and Adam~J Harper.
\newblock Maxima of a randomized {R}iemann zeta function, and branching random
  walks.
\newblock {\em The Annals of Applied Probability}, 27(1):178--215, 2017.

\bibitem[ABR20]{argbourad20}
L.-P. Arguin, P.~Bourgade, and M.~Radziwi\l{\l}.
\newblock The {F}yodorov-{H}iary-{K}eating {C}onjecture. {I}.
\newblock {\em Preprint arXiv:2007.00988}, 2020.

\bibitem[ABR22]{argbourad22}
L.-P. Arguin, P.~Bourgade, and M.~Radziwi\l{\l}.
\newblock The {F}yodorov-{H}iary-{K}eating {C}onjecture. {II}.
\newblock {\em in preparation}, 2022.

\bibitem[ADH21]{argdubhar21}
L.-P. Arguin, G.~Dubach, and L.~Hartung.
\newblock Maxima of a {R}andom {M}odel of the {R}iemann {Z}eta {F}unction over
  {I}ntervals of {V}arying {L}ength.
\newblock {\em Preprint arXiv:2103.04817}, 2021.

\bibitem[AOR19]{argouirad19}
L.-P. Arguin, F.~Ouimet, and M.~Radziwi{\l}{\l}.
\newblock Moments of the {R}iemann zeta function on short intervals of the
  critical line.
\newblock {\em Preprint arXiv:1901.04061}, 2019.

\bibitem[BH15]{bovhar15}
Anton Bovier and Lisa Hartung.
\newblock Variable speed branching {B}rownian motion 1. {E}xtremal processes in
  the weak correlation regime.
\newblock {\em ALEA Lat. Am. J. Probab. Math. Stat.}, 12(1):261--291, 2015.

\bibitem[BK22]{baikea22}
E.~C. Bailey and J.~P. Keating.
\newblock Maxima of log-correlated fields: some recent developments.
\newblock {\em Journal of Physics A: Mathematical and Theoretical},
  55(5):053001, jan 2022.

\bibitem[BKL02]{bokulo02}
Anton Bovier, Irina Kurkova, and Matthias L\"{o}we.
\newblock Fluctuations of the free energy in the {REM} and the {$p$}-spin {SK}
  models.
\newblock {\em Ann. Probab.}, 30(2):605--651, 2002.

\bibitem[Bov06]{bovier06}
Anton Bovier.
\newblock {\em Statistical mechanics of disordered systems}, volume~18 of {\em
  Cambridge Series in Statistical and Probabilistic Mathematics}.
\newblock Cambridge University Press, Cambridge, 2006.
\newblock A mathematical perspective.

\bibitem[Bov17]{bovier17}
Anton Bovier.
\newblock {\em Gaussian processes on trees: {F}rom spin glasses to branching
  {B}rownian motion}, volume 163.
\newblock Cambridge University Press, 2017.

\bibitem[CHL19]{corharlou19}
Aser Cortines, Lisa Hartung, and Oren Louidor.
\newblock The structure of extreme level sets in branching {B}rownian motion.
\newblock {\em Ann. Probab.}, 47(4):2257--2302, 2019.

\bibitem[DBMN19]{bormelnik19}
Martina Dal~Borgo, Pierre-Lo\"{\i}c M\'{e}liot, and Ashkan Nikeghbali.
\newblock Local limit theorems and mod-{$\phi$} convergence.
\newblock {\em ALEA Lat. Am. J. Probab. Math. Stat.}, 16(1):817--853, 2019.

\bibitem[FGH07]{fargonhug07}
D.~W. Farmer, S.~M. Gonek, and C.~P. Hughes.
\newblock The maximum size of {$L$}-functions.
\newblock {\em J. Reine Angew. Math.}, 609:215--236, 2007.

\bibitem[FHK12]{fyohiakea12}
Y.~V. Fyodorov, G.~A Hiary, and J.~P. Keating.
\newblock Freezing transition, characteristic polynomials of random matrices,
  and the {R}iemann zeta function.
\newblock {\em Physical Review Letters}, 108(17):170601, 2012.

\bibitem[FK14]{fyokea14}
Y.~V. Fyodorov and J.~P. Keating.
\newblock Freezing transitions and extreme values: random matrix theory, and
  disordered landscapes.
\newblock {\em Philosophical Transactions of the Royal Society A: Mathematical,
  Physical and Engineering Sciences}, 372(2007):20120503, 2014.

\bibitem[FMN16]{fermelnik16}
V.~F\'{e}ray, P.-L. M\'{e}liot, and A.~Nikeghbali.
\newblock {\em Mod-$\phi$ convergence}.
\newblock Springer Briefs in Probability and Mathematical Statistics. Springer,
  Cham, 2016.
\newblock Normality zones and precise deviations.

\bibitem[Har13]{har13b}
A.~J. Harper.
\newblock Sharp conditional bounds for moments of the {R}iemann zeta function.
\newblock {\em Preprint arXiv:1305.4618}, 2013.

\bibitem[Har19]{har19}
A.~J. Harper.
\newblock On the partition function of the {Riemann} zeta function, and the
  {Fyodorov}--{Hiary}--{Keating} conjecture.
\newblock {\em Preprint arXiv:1906.05783}, 2019.

\bibitem[HKO01]{hugkeaoco01}
C.~P. Hughes, J.~P. Keating, and N.~O'Connell.
\newblock On the characteristic polynomial of a random unitary matrix.
\newblock {\em Comm. Math. Phys.}, 220(2):429--451, 2001.

\bibitem[HL18]{harlit18}
G.~H. Hardy and J.~E. Littlewood.
\newblock Contributions to the theory of the {R}iemann zeta-function and the
  theory of the distribution of primes.
\newblock {\em Acta Mathematica}, 41:119--196, 1918.

\bibitem[HRS19]{hearadsou19}
W.~Heap, M.~Radziwi{\l}{\l}, and K.~Soundararajan.
\newblock Sharp upper bounds for fractional moments of the {R}iemann zeta
  function.
\newblock {\em Q. J. Math.}, 70(4):1387--1396, 2019.

\bibitem[HS20]{heasou20}
Winston Heap and Kannan Soundararajan.
\newblock Lower bounds for moments of zeta and $l$-functions revisited.
\newblock {\em arXiv preprint arXiv:2007.13154}, 2020.

\bibitem[Ing26]{ing26}
A.~E. Ingham.
\newblock Mean-value theorems in the theory of the {R}iemann zeta-function.
\newblock {\em Proceedings of the London Mathematical Society}, 2(1):273--300,
  1926.

\bibitem[Ino19]{ino19}
Sh{\=o}ta Inoue.
\newblock On the logarithm of the {R}iemann zeta-function and its iterated
  integrals.
\newblock {\em arXiv preprint arXiv:1909.03643}, 2019.

\bibitem[KS00]{keasna00a}
J.~P. Keating and N.~C. Snaith.
\newblock Random matrix theory and $\zeta(1/2+it)$.
\newblock {\em Communications in Mathematical Physics}, 214(1):57--89, 2000.

\bibitem[MV07]{MonVau07}
Hugh~L. Montgomery and Robert~C. Vaughan.
\newblock {\em Multiplicative number theory. {I}. {C}lassical theory},
  volume~97 of {\em Cambridge Studies in Advanced Mathematics}.
\newblock Cambridge University Press, Cambridge, 2007.

\bibitem[Naj18]{naj18}
J.~Najnudel.
\newblock On the extreme values of the {R}iemann zeta function on random
  intervals of the critical line.
\newblock {\em Probab. Theory Related Fields}, 172(1-2):387--452, 2018.

\bibitem[Pow18]{pow18}
Ellen Powell.
\newblock {Critical Gaussian chaos: convergence and uniqueness in the
  derivative normalisation}.
\newblock {\em Electronic Journal of Probability}, 23(none):1 -- 26, 2018.

\bibitem[Rad11]{rad11}
M.~Radziwi{\l}{\l}.
\newblock Large deviations in {S}elberg's central limit theorem.
\newblock {\em Preprint arXiv:1108.5092}, 2011.

\bibitem[RS17]{radsou17}
Maksym Radziwi\l{\l} and Kannan Soundararajan.
\newblock Selberg's central limit theorem for {$\log{|\zeta(1/2+it)|}$}.
\newblock {\em Enseign. Math.}, 63(1-2):1--19, 2017.

\bibitem[Sel46]{sel46}
Atle Selberg.
\newblock Contributions to the theory of the {R}iemann zeta-function.
\newblock {\em Arch. Math. Naturvid.}, 48(5):89--155, 1946.

\bibitem[Sel92]{sel92}
Atle Selberg.
\newblock Old and new conjectures and results about a class of {D}irichlet
  series.
\newblock In {\em Proceedings of the {A}malfi {C}onference on {A}nalytic
  {N}umber {T}heory ({M}aiori, 1989)}, pages 367--385. Univ. Salerno, Salerno,
  1992.

\bibitem[Sou09]{sou09}
K.~Soundararajan.
\newblock Moments of the {R}iemann zeta function.
\newblock {\em Ann. of Math. (2)}, 170(2):981--993, 2009.

\bibitem[Sou21]{sou21}
K.~Soundararajan.
\newblock The distribution of values of zeta and {$L$}-functions.
\newblock {\em arXiv:2112.03389}, 2021.

\end{thebibliography}

\end{document}